\newcommand{\N}{\mathbb N}
\newcommand{\A}{\mathcal A}
\newcommand{\B}{\mathcal B}
\newcommand{\LL}{\mathcal L}
\newcommand{\R}{\mathcal R}
\newcommand{\uu}{\mathbf{u}}
\newcommand{\vv}{\mathbf{v}}
\newcommand{\yy}{\mathbf{y}}
\newcommand{\dd}{\mathbf{d}}
\newcommand{\CR}{\mathrm{E}}
\newcommand{\lcm}{\mathrm{lcm}}
\newcommand{\Per}{\mathrm{Per}}
\newcommand{\colour}{\mathrm{colour}}
\newcommand{\barva}{\mathrm{colour}}
\newcommand{\mat}[4]{\left(
	\begin{array}{cc}
		#1 & #2 \\
		#3 & #4 \\  
	\end{array}
	\right)
}
\theoremstyle{definition}
\newtheorem{definition}{Definition}
\newtheorem{corollary}[definition]{Corollary}
\newtheorem{remark}[definition]{Remark}
\newtheorem{example}[definition]{Example}
\theoremstyle{plain}
\newtheorem{theorem}[definition]{Theorem}
\newtheorem{proposition}[definition]{Proposition}
\newtheorem{lemma}[definition]{Lemma}
\newtheorem{observation}[definition]{Observation}
\begin{document}

\title{Asymptotic repetitive threshold of balanced sequences}

\author[L\!'. Dvo\v r\'akov\'a]{L\!'ubom\'ira Dvo\v r\'akov\'a}
\address{FNSPE Czech Technical University in Prague, Czech Republic}
\curraddr{}
\email{lubomira.dvorakova@fjfi.cvut.cz}
\thanks{The first and the third authors were supported by the Ministry of Education, Youth and Sports of the Czech Republic through the project  CZ.02.1.01/0.0/0.0/16\_019/0000778.}

\author[D. Opo\v censk\'a]{Daniela Opo\v censk\'a}
\address{FNSPE Czech Technical University in Prague, Czech Republic}
\curraddr{}
\email{opocedan@fjfi.cvut.cz}
\thanks{The second author was supported  by Czech technical university in Prague, through the project  SGS20/183/OHK4/3T/14.}

\author[E. Pelantov\'a]{Edita Pelantov\'a}
\address{FNSPE Czech Technical University in Prague, Czech Republic}
\curraddr{}
\email{edita.pelantova@fjfi.cvut.cz}

 \subjclass[2020]{68R15}

\date{}

\dedicatory{}

\begin{abstract} The critical exponent  $E(\mathbf u)$ of an infinite sequence $\mathbf u$ over a finite alphabet expresses the maximal repetition of a factor in $\mathbf u$.   By the famous Dejean's theorem,  $E(\mathbf u) \geq 1+\frac1{d-1}$ for every $d$-ary sequence $\mathbf u$. We define the asymptotic critical exponent $E^*(\mathbf u)$  as the upper limit  of  the maximal repetition of factors of length $n$.
We show that for any $d>1$  there exists a $d$-ary sequence $\uu$ having $E^*(\mathbf u)$ arbitrarily  close to  $1$.   
Then we focus on the class of $d$-ary balanced sequences. In this class,  the  values $E^*(\mathbf u)$ are bounded from below by  a threshold strictly bigger than 1. We provide a method which enables us to find a $d$-ary balanced sequence with the least asymptotic critical exponent for $2\leq d\leq 10$.  

\end{abstract}
\maketitle










\section{Introduction}

The concatenation of $e\in \mathbb{N}$ copies of a non-empty word $u$ is usually  abbreviated  as   $u^e$. In 1972, Dejean  extended this exponential notation to rational exponents.
If $u$ is a non-empty word of length $\ell$ and $e$ is a positive rational number of the form $n/\ell$, then  $u^e$ denotes the prefix of length $n$  of the infinite periodic sequence $uuu\cdots = u^\omega$.
For instance, a Czech word $starosta$ (mayor) can be written in this formalism  as  $(staro)^{8/5}$.  The rational  exponent $e$  describes the repetition rate of  $u$ in the string $u^e$.

The critical exponent  $E(\uu )$ of an infinite sequence $\uu = u_0u_1u_2 \cdots$ captures the maximal possible repetition rate of factors occurring in  $\uu$, formally,
$$E(\uu) =\sup\{e \in \mathbb{Q}: \  u ^e \  \text{is a factor of   } \uu  \  \text{for a non-empty word} \  u\}\,.
$$
The number $E(\uu)$  can be rational or irrational.  Krieger and Shallit~\cite{KrSh07} have shown that any positive real number larger than one is a critical exponent of some sequence   over a finite alphabet.  If  the size of the alphabet is a fixed number $d \in \mathbb{N}, d\geq 2$, then the critical exponent of any  sequence $\uu$ over this alphabet cannot be smaller than a threshold larger than one. This bound is denoted in~\cite{CuRa11} as $RT(d)$  and called the repetitive threshold, i.e.,
$$ RT(d) = \inf\{ E(\uu) : \uu \text{\ is a sequence over a $d$-ary alphabet} \}\,.
$$

For instance, if  $\uu$ is a binary sequence over $\{0,1\}$, then  $00$ or $11$ or $0101$ appears in  $\uu$, and thus $E(\uu) \geq 2$. The existence of an infinite binary sequence $\uu$ with $E(\uu) = 2$ was demonstrated by Thue in 1912. The binary sequence for which $E(\uu) = 2$ is nowadays known as the Thue-Morse sequence (or the Prouhet-Thue-Morse sequence).
Therefore $RT(2)=2$. Dejean~\cite{Dej72} proved that $RT(3)=\frac{7}{4}$ and this bound is attained. In the same paper she also conjectured:
\begin{itemize}
    \item $RT(4)=7/5$;
    \item $RT(d)=1+\frac{1}{d-1}$ for $d\geq 5$.
\end{itemize}
The conjecture had been proved step by step by many people
\cite{Pan84c, Mou92, MoCu07, Car07, CuRa11, Rao11}.

\medskip
Recently, Rampersad, Shallit and Vandome asked the same question for $d$-ary balanced sequences.
Let us recall that a sequence over a finite alphabet is balanced if, for any two of
its factors $u$ and $v$ of the same length, the number of occurrences of each letter
in $u$ and $v$ differs by at most 1.
Binary balanced aperiodic sequences coincide with Sturmian sequences and the least critical exponent is equal to $2+\frac{1+\sqrt{5}}{2}$ and it is reached by the Fibonacci sequence~\cite{CaLu2000}.
Let us denote the repetitive threshold of balanced sequences over a $d$-ary alphabet by $RTB(d)$, i.e.,
$$RTB(d) = \inf\{E(\vv): \vv \text{ is a balanced sequence over a } d \text{-ary alphabet}\}\,. $$
The following results have been proved so far:
\begin{itemize}
    \item $RTB(3)=2+\frac{1}{\sqrt{2}}$ and $RTB(4)=1+\frac{1+\sqrt{5}}{4}$ \cite{RSV19};
    \item $RTB(d)=1+\frac{1}{d-3}$ for $5 \leq d\leq 10$ \cite{Bar20, BaSh19, DDP21};
    \item $RTB(d)=1+\frac{1}{d-2}$ for $d=11$ and all even numbers $d \geq 12$ \cite{DOPS2022}.
    \end{itemize}
It remains as an open problem to prove the conjecture $RTB(d)=1+\frac{1}{d-2}$ also for all odd numbers $d\geq 13$.

\medskip

In this paper we focus on the asymptotic critical exponent $E^*(\uu)$ of $\uu$. It is defined to be $+\infty$ if $E(\uu) = +\infty$ and
$$E^*(\uu) =\limsup_{n\to \infty}\{e \in \mathbb{Q}: \  u ^e \  \text{is a factor of  } \uu  \  \text{for some }  u \ \text{of length} \  n  \}\,,$$
 otherwise. Obviously, $E^*(\uu) \leq E(\uu)$ and the equality holds true whenever  $E(\uu)$ is irrational. It is for instance the case of the Fibonacci sequence.  Nevertheless,  $E^*(\uu)$ and  $E(\uu)$ can coincide even if $E(\uu)$ is rational: it is the case of the Thue-Morse sequence.  
 
 While the value $E(\uu)$ takes into account repetitions of all factors of $\uu$,  $E^*(\uu)$ considers only repetitions of factors of length tending to infinity.  There is a huge literature devoted to questions situated between these two extremes. For example, Shur and Tunev~\cite{ShTu} construct a $d$-ary sequence $\uu$ whose all factors (except the trivial repetition of one letter in factors of the form $ a_1a_2\cdots a_{d-1}a_1$) have the exponent $< 1+\frac{1}{d-1} = RT(d)$. Other results of this flavour can be found in~\cite{BaCr, De, Sh}.

We provide a simple construction showing that  for any $d\in \mathbb{N}, d\geq 2,$ and any $\epsilon>0$,
there exists a~$d$-ary sequence $\uu$ with $E^*(\uu)<1+\epsilon$.
Therefore, if we denote $$RT^*(d) =\inf\{E^*(\uu): \uu \text{\ is a sequence over a $d$-ary alphabet}\}\,, $$
we have $RT^*(d) =1 \quad \text{for all $d\geq 2$}\,.$
Then we restrict our study to balanced sequences and  look for  the threshold
$$RTB^*(d) =\inf\{E^*(\vv): \vv \text{\ is a  balanced sequence over a $d$-ary alphabet}\}\,. $$
This threshold is bounded from below by $1+\frac{1}{2^{d-2}}$, see Corollary~\ref{coro:lower_bound}. 
It is known that $RTB^*(2)=RTB(2)=2 + \frac{\sqrt{5}+1}{2}\doteq3.618$ and it is reached by the Fibonacci sequence.
We introduce a new tool -- graphs of admissible tails -- for computation of the exact value of $RTB^*(d)$. Using it we obtain at once $RTB^*(d)$  for $ d \leq 10$, see Table~\ref{RTB*}. Let us point out that a similar result for $RTB(d)$ had been done step by step by several research teams.

Comparing the results with the minimal critical exponent of balanced sequences, we can see that 
$RTB^*(d)=RTB(d)$ for $d \in \{2,3,4,5\}$, but $RTB^*(d)<RTB(d)$ for larger $d$. Moreover, 
the precise values  $RTB^*(d)$ we have found for $d\leq 10$ suggest that  $RTB^*(d) < 1+ 
{q^d}$ for some positive $q <1$, whereas $RTB(d) > RT(d) = 1+ \frac{1}{d-1}$.   Looking into  Table~\ref{RTB*} we can see that

\medskip 

\centerline{ $RTB^*(6) < 1+\frac{1}{4}$, \  $RTB^*(7)< 1+\frac{1}{7}$, \  $RTB^*(8)< 1+\frac{1}{20}$, \  $RTB^*(9)< 1+\frac{1}{30}$,   \ 
$RTB^*(10)< 1+\frac{1}{68}$. }

\noindent The time and space complexity of our  algorithm allowed us to determine $RTB^*(d)$ only for $d\leq 10$. It seems that a new idea is  needed to extend Table~\ref{RTB*}  for  $d\geq 11$.  


\medskip

\section{Preliminaries}
\label{Section_Preliminaries}
An \textit{alphabet} $\A$ is a finite set of symbols called \textit{letters}.
A \textit{word} over $\A$ of \textit{length} $n$ is a string $u = u_0 u_1 \cdots u_{n-1}$, where $u_i \in \A$ for all $i \in \{0,1, \ldots, n-1\}$.
The length of $u$ is denoted by $|u|$. 
The set of all finite words over $\A$ together with the operation of concatenation forms a monoid, denoted $\A^*$.
Its neutral element is the \textit{empty word} $\varepsilon$ and we denote $\A^+ = \A^* \setminus \{\varepsilon\}$.
If $u = xyz$ for some $x,y,z \in \A^*$, then $x$ is a \textit{prefix} of $u$, $z$ is a \textit{suffix} of $u$ and $y$ is a \textit{factor} of $u$.
To any word $u$ over $\A$ with cardinality $\#\A = d$,  we assign its \textit{Parikh vector} $\vec{V}(u) \in \N^{d}$ defined as $(\vec{V}(u))_a = |u|_a$ for all $a \in \A$, where $|u|_a$ is the number of letters $a$ occurring in $u$.

A \textit{sequence} over $\A$ is an infinite string $\uu = u_0 u_1 u_2 \cdots$, where $u_i \in \A$ for all $i \in \N$. The notation $\A^{\mathbb{N}}$ stands for the set of all sequences over $\A$. 
We always denote sequences by bold letters. 
The shift operator $\sigma$ maps any sequence $\uu = u_0 u_1 u_2 \cdots$ to the sequence $\sigma(\uu) = u_1 u_2 u_3 \cdots$. 

A sequence $\uu$ is \textit{eventually periodic} if $\uu = vwww \cdots = vw^\omega$ for some $v \in \A^*$ and $w \in \A^+$.
It is \textit{periodic} if $\uu=w^{\omega}$. In both cases, the number $|w|$ is a \textit{period} of $\uu$. We write $\Per\, \uu$ for the minimal period of $\uu$.
If $\uu$ is not eventually periodic, then it is \textit{aperiodic}.
A \textit{factor} of $\uu = u_0 u_1 u_2 \cdots$ is a word $y$ such that $y = u_i u_{i+1} u_{i+2} \cdots u_{j-1}$ for some $i, j \in \N$, $i \leq j$. 
The number $i$ is called an \textit{occurrence} of the factor $y$ in $\uu$.
In particular, if $i = j$, the factor $y$ is the empty word $\varepsilon$ and any index $i$ is its occurrence.
If $i=0$, the factor $y$ is a \textit{prefix} of $\uu$.
If each factor of $\uu$ has infinitely many occurrences in $\uu$, the sequence $\uu$ is \textit{recurrent}.
Moreover, if for each factor the distances between its consecutive occurrences are bounded, $\uu$ is \textit{uniformly recurrent}. 

The \textit{language} $\mathcal{L}(\uu)$ of a sequence $\uu$ is the set of all its factors. 
A factor $w$ of $\uu$ is \textit{right special} if $wa, wb$ are in $\mathcal{L}(\uu)$ for at least two distinct letters $a,b \in \A$.
A \textit{left special} factor is defined symmetrically.
A~factor is \textit{bispecial} if it is both left and right special.

A~sequence $\uu\in \A^{\mathbb{N}}$ is \textit{balanced} if for every letter $a \in \A$ and every pair of factors $u,v \in {\mathcal L}(\uu)$ with $|u|=|v|$, we have $|u|_a-|v|_a\leq 1$.
Every recurrent balanced sequence over any alphabet  is uniformly recurrent, see \cite{DolceDP21}. 
An aperiodic balanced sequence  over a~binary alphabet  is   called \textit{Sturmian},
 \cite{MoHe40}. There exist many equivalent definitions of Sturmian sequences, see for instance \cite{BeSe02}.

A \textit{morphism} over $\A$ is a mapping $\psi: \A^* \to \A^*$ such that $\psi(uv) = \psi(u)\psi(v)$  for all $u, v \in \A^*$.
Morphisms can be naturally extended to $\A^{\mathbb{N}}$ by setting
$\psi(u_0 u_1 u_2 \cdots) = \psi(u_0) \psi(u_1) \psi(u_2) \cdots\,$.
A \textit{fixed point} of a morphism $\psi$ is a sequence $\uu$ such that $\psi(\uu) = \uu$.
\begin{example}
\label{ex:FiboDef}
The most famous Sturmian sequence is  the \textit{Fibonacci sequence}
$$
\uu_f = {\tt babbababbabbababbababb}\cdots\,,
$$
defined as the fixed point of the morphism $f: {\tt b} \mapsto {\tt ba}$, ${\tt a} \mapsto {\tt b}$. The critical exponent of $\uu_f$ is $2+\frac{1+\sqrt{5}}{2}$. As shown by Carpi and de Luca~\cite{CaLu2000}, it is the least critical exponent for Sturmian sequences, i.e., $RTB(2) = E(\uu_f)$. 
\end{example}

Consider a factor $w$ of a recurrent sequence $\uu = u_0 u_1 u_2 \cdots$.
Let $i < j$ be two consecutive occurrences of $w$ in $\uu$.
Then the word $u_i u_{i+1} \cdots u_{j-1}$ is a \textit{return word} to $w$ in $\uu$.
The set of all return words to $w$ in $\uu$ is denoted by $\R_\uu(w)$.
If $\uu$ is uniformly recurrent, the set $\R_\uu(w)$ is finite for each factor $w$.
If $w$ is a prefix of $\uu$, then $\uu$ can be written as a concatenation $\uu=r_{d_0}r_{d_1}r_{d_2} \cdots$ of return words to $w$. 
The sequence
$\dd_\uu(w) = d_0d_1d_2 \cdots$
 over the alphabet of cardinality $\# \R_\uu(w)$ is called the  \textit{derived sequence} of $\uu$ to $w$.
 If $w$ is not a prefix, then we construct the derived sequence in an analogous way starting from the first occurrence of $w$ in $\uu$. 
 The concept of derived sequences was introduced by Durand~\cite{Dur98}.

\section{Asymptotic critical exponent and its relation to return words}
\label{Section_CriticalExponent}

In \cite{DDP21} a handy formula for computation of the critical exponent and asymptotic critical exponent of uniformly recurrent sequences is deduced.  It uses the notion of return words to a factor of a sequence.
\begin{theorem}[\cite{DDP21}]
\label{Prop_FormulaForCR}
Let $\uu$ be a uniformly recurrent aperiodic sequence\footnote{Arseny M. Shur in private communication pointed out to us that the same formula remains valid for recurrent aperiodic sequences.}.
Let $(u_n)$ be a sequence of all bispecial factors ordered by their length.
For every $n \in \N$, let $r_n$ be a shortest return word to $u_n$ in $\uu$.
Then
$$
\CR(\uu) = 1 + \sup\limits_{n \in \N} \left\{ \frac{|u_n|}{|r_n|} \right\}
\qquad \text{and} \qquad
\CR^*(\uu) = 1 + \limsup\limits_{n \to \infty}  \frac{|u_n|}{|r_n|} .
$$
\end{theorem}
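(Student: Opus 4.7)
The approach is to set up a two-way correspondence between high-exponent repetitions in $\uu$ and bispecial factors paired with their shortest return words, and then pass to $\sup$ and $\limsup$.

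\textbf{Lower bound ($\ge$).} For every bispecial $u_n$ with shortest return word $r_n$, the word $r_n u_n$ is a factor of $\uu$: it begins with $u_n$ (since a return word starts at an occurrence of $u_n$) and ends with $u_n$ (since the next occurrence of $u_n$ lies exactly $|r_n|$ positions later). Hence $r_n u_n$ has period $|r_n|$ and exponent $1+|u_n|/|r_n|$, giving the $\sup$ bound for $\CR(\uu)$. For $\CR^*(\uu)$, along any subsequence realizing $\limsup_n |u_n|/|r_n|$, either the periods $|r_n|$ diverge along a sub-subsequence—so the repetitions $r_n u_n$ contribute directly to the $\limsup$ defining $\CR^*$—or $|r_n|$ stays bounded, in which case $|u_n|\to\infty$ produces arbitrarily long factors of bounded period, forcing $\CR(\uu)=\infty$ and hence $\CR^*(\uu)=\infty$ by convention.

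\textbf{Upper bound ($\le$).} Let $w=v^e$ be a factor of $\uu$ with $e>1$ and period $p=|v|$. Using uniform recurrence, I extend $w$ inside $\uu$ as far as possible on both sides while preserving period $p$; aperiodicity ensures the resulting maximal extension $z$ is a finite factor. Let $u$ be the prefix of $z$ of length $|z|-p$; by periodicity it is also the suffix of that length. Right-maximality of $z$ forces the letter actually following $z$ in $\uu$ to differ from $z_{|z|-p+1}$, so the two occurrences of $u$ inside $z$—as prefix and as suffix—are followed in $\uu$ by distinct letters, yielding right specialness; a symmetric argument gives left specialness, so $u$ is bispecial. Since those two occurrences of $u$ lie at distance $p$ inside $z$, the shortest return word $r$ to $u$ satisfies $|r|\le p$, whence $e-1=|u|/p\le|u|/|r|$. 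This ratio equals $|u_n|/|r_n|$ for some $n$, yielding $\CR(\uu)\le 1+\sup_n |u_n|/|r_n|$. For $\CR^*$, applying the same construction to a limsup-witnessing sequence of repetitions $(v_k^{e_k})$ with $|v_k|\to\infty$ and $e_k\to\CR^*(\uu)>1$ produces bispecial factors of lengths $|u|\ge(e_k-1)|v_k|\to\infty$, whose indices in the length-ordered enumeration tend to infinity and supply the needed lower bound on $\limsup_n|u_n|/|r_n|$.

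\textbf{Main obstacle.} I expect the technical heart of the proof to be the maximal-extension construction together with the two-sided forbidden-letter dichotomy establishing bispeciality of $u$, where one must carefully track how the periodic continuation and the actual continuation in $\uu$ interact at both endpoints of $z$. The secondary delicate point is the asymptotic bookkeeping: matching limsup-witnessing repetitions with bispecials of growing index, and dismissing the degenerate case of bounded return-word length by invoking aperiodicity in conjunction with uniform recurrence to reach a contradiction.
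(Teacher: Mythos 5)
This theorem is not proved in the paper at all: it is imported verbatim from \cite{DDP21}, so there is no in-paper argument to compare against. Judged on its own, your proof is essentially the standard (and, as far as the cited source goes, the intended) argument, and it is correct in both directions: for the lower bound, $r_nu_n$ is a factor with period $|r_n|$, hence exponent $1+|u_n|/|r_n|$; for the upper bound, a maximal period-$p$ extension $z$ of a repetition $v^e$ yields the bispecial factor $u$ (prefix $=$ suffix of $z$ of length $|z|-p$) with two occurrences at distance $p$, so the shortest return word has length at most $p$ and $e-1\leq |u|/|r|$; the passage to $\sup$ and $\limsup$ (including the $|r_n|$ bounded versus unbounded dichotomy and the growth of the indices of the produced bispecials) is handled correctly. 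Two small points deserve tightening. First, "$e-1=|u|/p$" should be an inequality, since $z$ may strictly contain $v^e$; you in fact use the correct inequality $|u|\geq(e-1)|v|$ later, so this is only a slip of wording. Second, the bispeciality step needs slightly more care than "aperiodicity ensures $z$ is finite": aperiodicity alone guarantees that the rightward extension at a fixed occurrence terminates (otherwise $\uu$ would be eventually periodic), but for left specialness you must also have a letter preceding $z$, i.e.\ an occurrence of the maximal period-$p$ word that is not a prefix of $\uu$; here one really uses recurrence (or minimality, coming from uniform recurrence plus aperiodicity, which bounds the lengths of period-$p$ factors in $\mathcal{L}(\uu)$) to pick such an occurrence or to see that the iterated extension process stops. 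You flag exactly this as the technical heart, and the patch is routine, so I regard these as presentational gaps rather than genuine ones.
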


\begin{theorem} Let $\mathcal{A}$ be a finite alphabet of size at least 2.  Then    $$\inf\{\, E^*(\uu)\ : \ \uu \in \mathcal{A}^\mathbb{N}, \ \uu \text{\  uniformly recurrent}\,\} =  1. $$
Consequently, $RT^*(d) = 1$ for every $d\geq 2$.  
\end{theorem}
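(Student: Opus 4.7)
The plan is to construct, for each $\epsilon > 0$, an explicit uniformly recurrent sequence $\uu \in \mathcal{A}^{\mathbb{N}}$ with $E^*(\uu) < 1 + \epsilon$; both equalities in the statement then follow immediately, the second from the fact that uniformly recurrent sequences form a subclass of all sequences. The main tool is Theorem~\ref{Prop_FormulaForCR}, which for uniformly recurrent aperiodic $\uu$ gives the identity $E^*(\uu) = 1 + \limsup_{n \to \infty} |u_n|/|r_n|$, where $(u_n)$ enumerates the bispecial factors of $\uu$ in order of increasing length and $r_n$ denotes a shortest return word to $u_n$. Hence the problem reduces to producing a uniformly recurrent $\mathcal{A}$-sequence in which bispecial factors grow asymptotically much more slowly than their shortest return words.

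The route I propose is a lift from a sufficiently large alphabet to $\mathcal{A}$. Given $\epsilon$, fix $D \geq 5$ with $RT(D) = 1 + \frac{1}{D-1} < 1 + \epsilon$ and, by Dejean's theorem, let $\vv$ be a $D$-ary uniformly recurrent sequence realising $E(\vv) \leq 1 + \frac{1}{D-1}$; in particular $E^*(\vv) \leq 1 + \frac{1}{D-1}$ and $\vv$ is square-free. Choose $\ell$ large enough that an injective \emph{comma-free} code $\rho : \{0, \dots, D-1\} \to \mathcal{A}^{\ell}$ exists; such codes are guaranteed for every $d = |\mathcal{A}| \geq 2$ once $\ell$ is large enough, since the maximum size of a length-$\ell$ comma-free code over a $d$-letter alphabet grows like $d^{\ell}/\ell$ (Eastman's theorem). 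Set $\uu = \rho(\vv)$; uniformity of the morphism $\rho$ combined with uniform recurrence of $\vv$ yields uniform recurrence of $\uu$.

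The core quantitative step is the comparison $E^*(\uu) \leq E^*(\vv) + o(1)$. Comma-freeness ensures that every occurrence of a codeword inside $\uu$ sits at a position that is a multiple of $\ell$. Therefore, whenever $w^e$ is a factor of $\uu$ with $|w| \geq \ell$, an aligned codeword inside one copy of $w$ and its shifted counterpart inside the next copy must both be block-aligned, forcing $|w| \equiv 0 \pmod{\ell}$. Trimming at most one partial codeword from each of the two fringes of $w^e$ yields a fully block-aligned sub-repetition that descends through $\rho^{-1}$ to a repetition $y^{e'}$ in $\vv$ with $e' \geq e - O(\ell/|w|)$. Passing to $\limsup_{|w| \to \infty}$ transfers the bound to $\uu$, giving $E^*(\uu) \leq E^*(\vv) \leq 1 + \frac{1}{D-1} < 1 + \epsilon$, as required. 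The principal obstacle is the clean execution of this descent when the starting position of $w^e$ is not block-aligned: one must book-keep the partial codewords at the two fringes and rule out spurious matches, relying solely on comma-freeness and injectivity of $\rho$; this careful end-analysis is what makes the $O(\ell/|w|)$ correction vanish uniformly.
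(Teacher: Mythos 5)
Your proposal is correct in substance and follows the same underlying strategy as the paper's proof: encode a sequence over a huge alphabet whose critical exponent is close to $1$ into the target alphabet by an injective \emph{uniform} code with a synchronization property, and transfer repetition bounds with a loss of only $O(\ell/|w|)$. The differences are in the ingredients and the bookkeeping. The paper codes a balanced $d$-ary sequence with $E(\vv)=1+\frac{1}{d-2}$ (taken from its own earlier work, $d\approx F_k$) into $\{0,1\}$ via the explicit marker code $i\mapsto 110\,(i)_{Fib}$, whose synchronization is certified elementarily (the factor $110$ occurs only at block starts, since Zeckendorf words avoid $11$), and it transfers the bound through bispecial factors and shortest return words via Theorem~\ref{Prop_FormulaForCR} applied to both $\uu$ and $\vv$. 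You instead take a Dejean word over $D$ letters, use an abstract comma-free code of length $\ell$ over $\mathcal{A}$ (existence via Eastman-type bounds), and descend fractional powers directly, never needing the return-word formula for $\uu$. Your route is more self-contained relative to this paper (no reliance on the balanced-sequence result of Dvo\v r\'akov\'a--Opo\v censk\'a--Pelantov\'a--Shur) and works for any $\mathcal{A}$ at once; the paper's route is completely explicit (no appeal to the existence of large comma-free codes) and recycles machinery already central to the article.

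Two points in your sketch need tightening, though neither is fatal. First, the alignment claim as literally stated is false: a factor $w^e$ with $|w|\ge\ell$ but small excess, say $(e-1)|w|<\ell$, imposes no congruence on $|w|$; what you need is that the overlap region of length $(e-1)|w|$ contains a full aligned codeword, i.e.\ roughly $(e-1)|w|\ge 2\ell$. This is harmless for $E^*$, because repetitions violating it have exponent at most $1+\frac{2\ell}{|w|}\to 1$, but the case split must be made explicit. Second, Dejean's theorem by itself does not assert uniform recurrence of the witnessing sequence; you should either cite that the known constructions are uniformly recurrent or, more simply, pass to a minimal subshift contained in the orbit closure of a Dejean word, which does not increase the critical exponent and supplies the uniformly recurrent $\vv$ you need so that $\uu=\rho(\vv)$ is uniformly recurrent.
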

\begin{proof}  It is enough to prove the statement for the alphabet $\mathcal{A} = \{0,1\}$. For  every Fibonacci number\footnote{The Fibonacci sequence is defined recursively: $F_0=0, F_1 = 1$, and $F_{n+2} = F_{n+1}+F_n$ for every $n \in  \N$.  } $F_k$, with $k\geq 7$,  we construct a binary sequence $\uu=\uu^{(k)}$ such that every bispecial factor $u$ of length at least $3(k+1)$  and every  return word $r$ to $u$ in $\uu$ satisfy $\frac{|u|}{|r|} < \frac{2}{F_k-3}$. For the sequence $\uu^{(k)}$, the second formula of Theorem  \ref{Prop_FormulaForCR} gives  $E^*\bigl({\uu^{(k)}}\bigr) \leq 1+ \frac{2}{F_k-3}$, which implies  the theorem.  Construction of $\uu^{(k)}$ follows.

Let $\mathcal{D} = \{0,1,\ldots, d-1\}$, where $d =2 \lfloor \frac12 F_k\rfloor \in \{F_k-1, F_k\}$.  By \cite{DOPS2022} there exists a balanced (and hence uniformly recurrent)  $d$-ary  sequence   $\vv$ having $E(\vv) = \frac{d-1}{d-2}$. 
Zeckendorf's theorem~\cite{Ze72} says that  every $i \in \mathcal{D}$ can be written in the form $i=\sum_{n=2}^{k-1} c_n F_n$, where
$c_{k-1}c_{k-2}\cdots c_2$ is a word over the alphabet $\{0,1\}$, which does not contain two consecutive $1$'s. We denote the $(k-2)$-tuple $c_{k-1}c_{k-2}\cdots c_2$ representing $i$ by  $(i)_{Fib}$ and define a  morphism  $\psi: \mathcal{D}^* \mapsto \{0,1\}^*$ and the binary sequence  $\uu$  as follows: 
$$\psi(i) = 110(i)_{Fib} \  \text{ for every } i \in \mathcal{D}\quad \text{and} \quad \uu = \uu^{(k)}=\psi(\vv).$$ The morphism $\psi$ is uniform since the image of any letter $i$ by $\psi$ has length $|\psi(i)| = k+1$.  Moreover,  $\uu $ is uniformly recurrent as $\vv$ is uniformly recurrent.  $\psi$ is  a coding since the  factor $110$ occurs only as a prefix of each $\psi(i)$.  Hence any factor $u\in \mathcal{L}(\uu)$ longer than $k+2$ can be written uniquely in  the form $u= u^{(L)}\psi(v) u^{(R)}$, where $v \in \mathcal{L}(\vv)$,   $u^{(L)}$ is a proper suffix of $\psi(i)$ and $u^{(R)}$  is a proper prefix  of $\psi(j)$  for some $i, j \in \mathcal{D}$.  
Obviously, if $u$ is a left special factor of $\uu$, i.e., $0u$ and $1u$ belong to the language of $\uu$, then $v$ is a left special factor of $\vv$. An analogous statement  is true  for  right special factors of $\uu$.

Let $u$ be a bispecial factor   of $\uu$  with  $|u| \geq 3(k+1)$ and $r$ be a return word to $u$ in $\uu$. Then there exist  a bispecial factor $v\in \mathcal{L}(\vv)$ of length  $|v| \geq 2$ and $ s \in \mathcal{L}(\vv)$ such that $u = u^{(L)}\psi(v) u^{(R)}$ and $ru = u^{(L)}\psi(sv) u^{(R)}$.  Obviously,   $s$ is a return word or concatenation of several return words to $v$ in $\vv$  and $|r| = |\psi(s)| = (k+1)|s|$.  As $E(\vv) = 1+\frac{1}{d-2}$,  Theorem \ref{Prop_FormulaForCR} implies $\frac{|v|}{|s|} \leq \frac{1}{d-2}< \frac{1}{F_k-3} $.   Hence
$$
\frac{|u|}{|r|} = \frac{|u^{(L)}| + (k+1)|v| + |u^{(R)}|}{ (k+1)|s|} \leq \frac{2k + (k+1)|v|}{(k+1)|s|} <  \frac{2|v|}{|s|} <   \frac{2}{F_k-3} \,,
$$ 
as we wanted to show. 
\end{proof}

\section{Sturmian sequences}
\label{sec:sturmian}

Sturmian sequences, i.e.,  aperiodic balanced  sequences over a binary alphabet,  are a principal tool in the study of balanced sequences over arbitrary alphabets. 
In the sequel, we will restrict our consideration to standard sequences. Let us recall that a Sturmian sequence $\uu$ is called a {\em standard sequence}
if both sequences ${\tt a}\uu$ and ${\tt b}\uu$ are Sturmian. For each Sturmian sequence there exists a unique standard sequence having the same language and thus the same critical exponent and  the asymptotic critical exponent. Sturmian sequences have well defined frequencies of letters. Let us recall that 
the \emph{frequency} of a letter $a$ in a sequence $\uu$ is the limit $\rho_a(\uu)=\lim_{n\to\infty} \frac{|u_0\cdots u_{n-1}|_a}{n}$ if it exists.

We use the characterization of standard sequences by their directive sequences. To introduce them, we recall two morphisms
$$
G = \
\left\{ \,
\begin{aligned}
{\tt a} & \to {\tt a} \\
{\tt b} & \to {\tt ab} \,
\end{aligned}
\right.
\quad \text{and} \quad
D = \
\left\{ \,
\begin{aligned}
{\tt a} & \to {\tt ba} \\
{\tt b} & \to {\tt b} \,
\end{aligned}
\right. .
$$

\begin{proposition}[{\cite{JuPi02}}]
\label{Prop_Standard}
For every standard sequence $\uu$ there is a uniquely given \emph{directive sequence} $\mathbf{\Delta} = \Delta_0 \Delta_1 \Delta_2 \cdots \in \{ G, D \}^\N$ of morphisms and a sequence $(\uu^{(n)})$ of standard sequences such that
$$
\uu = {\Delta_0 \Delta_1 \cdots \Delta_{n-1}} \left( \uu^{(n)} \right) \, \ \text{for every } \ n \in \N\,.
$$
Both $G$ and $D$ occur in the sequence $\mathbf{\Delta}$ infinitely often.
\end{proposition}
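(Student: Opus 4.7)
The plan is to construct the directive sequence iteratively by a \emph{desubstitution} process: at each step, recognize whether $\uu^{(n)}$ lies in the image of $G$ or of $D$, set $\Delta_n$ accordingly, and let $\uu^{(n+1)}$ be the unique preimage. The existence and uniqueness of each $\Delta_n$ and the standardness of $\uu^{(n+1)}$ will be established by a single desubstitution lemma; the ``infinitely often'' claim then follows by a separate argument using aperiodicity.

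For the desubstitution lemma, I first show that a standard Sturmian sequence $\uu$ contains exactly one of ${\tt aa}$ and ${\tt bb}$ as a factor. The balance inequality $|{\tt aa}|_{\tt a}-|{\tt bb}|_{\tt a}=2$ rules out both being factors, and if neither occurred then $\uu$ would be $({\tt ab})^\omega$ or $({\tt ba})^\omega$, contradicting aperiodicity. Next, I argue that the first letter of $\uu$ matches the absent square: if $\uu$ began with ${\tt b}$ and avoided ${\tt bb}$, then ${\tt b}\uu$ would be a Sturmian sequence containing ${\tt bb}$, hence avoiding ${\tt aa}$ by balance, which would force $\uu$ itself to avoid both squares. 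Therefore $\uu$ starts with ${\tt a}$ iff ${\tt bb}$ is absent, in which case every ${\tt b}$ is immediately preceded by ${\tt a}$ and the greedy parsing into blocks ${\tt a}$ and ${\tt ab}$ yields a unique $\uu^{(1)}$ with $\uu=G(\uu^{(1)})$; symmetrically, $\uu$ starts with ${\tt b}$ iff ${\tt aa}$ is absent, and then $\uu=D(\uu^{(1)})$.

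The main technical point is verifying that $\uu^{(1)}$ is again standard Sturmian. Balance and aperiodicity of $\uu^{(1)}$ follow from those of $\uu$: any imbalance in $\uu^{(1)}$ between equally long factors produces, after application of $G$ (or $D$), factors of $\uu$ whose lengths differ by at most one, from which a balance violation in $\uu$ can be extracted; aperiodicity is immediate from injectivity. For standardness, note that $G({\tt a}\uu^{(1)})={\tt a}\uu$ and $G({\tt b}\uu^{(1)})={\tt ab}\uu$, so ${\tt a}\uu^{(1)}$ and ${\tt b}\uu^{(1)}$ are preimages under the injective morphism $G$ of balanced aperiodic sequences obtained from $\uu$, whence they are Sturmian. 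I expect this bookkeeping to be the subtlest part; one may alternatively appeal to the classical fact that $G$ and $D$ generate the monoid of Sturmian morphisms and preserve standardness.

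Uniqueness of $\Delta_0$ is immediate since $G(\vv)$ always starts with ${\tt a}$ while $D(\vv)$ starts with ${\tt b}$; then $\uu^{(1)}$ is uniquely determined by injectivity of the morphism. Iterating produces $\mathbf{\Delta}$ and $(\uu^{(n)})$. For the ``infinitely often'' claim, suppose $\Delta_n=G$ for every $n\geq N$, the symmetric case being analogous. Then $\uu^{(N)}=G^k(\uu^{(N+k)})$ for every $k\in\N$. Each $\uu^{(N+k)}$ contains a ${\tt b}$ at some first position $m_k$, and from $G^k({\tt a})={\tt a}$ and $G^k({\tt b})={\tt a}^k{\tt b}$ the first ${\tt b}$ in $\uu^{(N)}$ occurs at position $m_k+k$. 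Since $\uu^{(N)}$ is fixed while $k$ is arbitrary, this is absurd.
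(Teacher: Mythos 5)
The paper offers no proof of this proposition at all --- it is quoted from Justin--Pirillo \cite{JuPi02} --- so there is nothing to match your argument against; what you have written is the standard desubstitution proof of that classical result, and its skeleton is sound: the ``exactly one of $\mathtt{aa},\mathtt{bb}$'' dichotomy, the use of standardness to show the first letter agrees with the missing square, the unique parsing into $\{\mathtt{a},\mathtt{ab}\}$ (resp.\ $\{\mathtt{b},\mathtt{ba}\}$), uniqueness of $\Delta_0$ from the first letter together with injectivity of $G$ and $D$, and the ``infinitely often'' argument via $G^k(\mathtt{b})=\mathtt{a}^k\mathtt{b}$ are all correct.

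Two steps, however, are under-justified as written. First, the transfer of a balance violation from $\uu^{(1)}$ to $\uu$ is mis-stated: if $v,w$ are equally long factors of $\uu^{(1)}$ whose $\mathtt{a}$-counts differ by $\delta\geq 2$, then $|G(v)|$ and $|G(w)|$ differ by exactly $\delta$, not ``by at most one'', and the naive extend-and-compare estimate only yields a discrepancy of $1$, which is no contradiction. The standard repair is the Coven--Hedlund lemma: an unbalanced binary sequence contains $\mathtt{a}t\mathtt{a}$ and $\mathtt{b}t\mathtt{b}$ for some word $t$; since every $G$-image of a letter begins with $\mathtt{a}$, one then finds $\mathtt{a}G(t)\mathtt{aa}$ and $\mathtt{b}G(t)\mathtt{ab}$ in $\uu$, an honest violation. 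Second, deducing standardness of $\uu^{(1)}$ from $G(\mathtt{b}\uu^{(1)})=\mathtt{ab}\uu$ requires knowing that $\mathtt{ab}\uu$ is Sturmian, which is not part of the definition of standard; you need, e.g., that every prefix $p$ of $\uu$ is left special in $\uu$ (so $\mathtt{b}p\in\mathcal{L}(\uu)$, and since $\uu$ begins with $\mathtt{a}$ and avoids $\mathtt{bb}$, every occurrence of $\mathtt{b}p$ is preceded by $\mathtt{a}$, giving $\mathcal{L}(\mathtt{ab}\uu)=\mathcal{L}(\uu)$), and that left-specialness of prefixes itself has to be extracted from the definition of standard (e.g.\ via the fact that Sturmian sequences of equal slope share their language). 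Your fallback --- invoking the classical theory of Sturmian morphisms, which is exactly what \cite{JuPi02} rests on --- is perfectly legitimate here; but if the aim is a self-contained proof, these are the two places that still need real arguments.
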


If $\Delta_0 = D$, then  $\mathtt{b}$ is the most frequent letter in $\uu$. Otherwise,  $\mathtt{a}$ is the most frequent letter in $\uu$.  We adopt the convention that $\rho_{\tt b}(\uu) > \rho_{\tt a}(\uu)$ and thus the directive sequence of $\uu$ starts with $D$. Let us write this sequence in the run-length encoded form  $\mathbf{\Delta} = D^{a_1} G^{a_2} D^{a_3} G^{a_4} \cdots$,
where all integers $a_n$ are positive. Then the number $\theta$ having the continued fraction expansion 
$\theta = [0, a_1, a_2, a_3, \ldots]$ equals the ratio 
$\frac{\rho_{\tt a}(\uu)}{\rho_{\tt b}(\uu)}$ (see \cite{BeSe02}) and $\theta$ is called the {\em slope} of $\uu$.

The convergents to the continued fraction of $\theta$,  usually  denoted $\frac{p_N}{q_N}$, have a close relation to return words in a Sturmian sequence. Recall that the sequences $(p_N)$ and $(q_N)$ both satisfy the recurrence relation 
\begin{equation}\label{eq:convergents}
X_{N+1}=a_{N+1}X_N+X_{N-1}
\end{equation}
with initial conditions $p_{-1}=1$, $p_0=0$ and $q_{-1}=0$, $q_0=1$.  Two consecutive convergents satisfy $p_{N}q_{N-1}-p_{N-1}q_N = (-1)^{N+1}$ for every $N\in \N$.

Vuillon~\cite{Vui01} showed that an infinite recurrent sequence $\uu$ is Sturmian if and only if each of its factors has exactly two return words. Moreover, the derived sequence of a Sturmian sequence to any of its factors is also Sturmian. 

All bispecial factors of any standard sequence $\uu$ are its prefixes. So, one of the return words to a bispecial factor of $\uu$ is a prefix of $\uu$.

\begin{proposition}[{\cite{DvMePe20}}]\label{prop:returnWords}
\label{ParikhRSB1}
Suppose that $\uu$ is a standard sequence with slope $\theta = [0, a_1,a_2, a_3, \ldots]$ and $z$ is a bispecial factor of $\uu$. Let $r$ (resp., $s$) denote the return word to $z$ which is (resp., is not) a prefix of $\uu$. Then
\begin{enumerate}
\item there exists a unique pair $(N,m) \in \N^2$ with $0 \le m < a_{N+1}$ such that the Parikh vectors of $r$, $s$, and $z$ are respectively
$$
\vec{V}(r) = \begin{pmatrix} p_N \\ q_N \end{pmatrix},
\; \; 
\vec{V}(s) = \begin{pmatrix} m \, p_{N} + p_{N-1} \\ m \, q_{N} + q_{N-1} \end{pmatrix},
\; \; 
\vec{V}(z) = \vec{V}(r) + \vec{V}(s) - \begin{pmatrix} 1 \\ 1 \end{pmatrix};
$$
\item the slope of the derived sequence $\dd_\uu(z)$ is
$${\theta}'=[0, a_{N+1} - m, a_{N+2}, a_{N+3}, \ldots].$$
\end{enumerate}
\end{proposition}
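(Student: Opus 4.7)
The plan is to prove Proposition~\ref{prop:returnWords} by induction on~$N$, exploiting the S-adic desubstitution $\uu = \Delta_0 \Delta_1 \cdots \Delta_{n-1}(\uu^{(n)})$ provided by Proposition~\ref{Prop_Standard}. The directive sequence $\mathbf{\Delta} = D^{a_1} G^{a_2} D^{a_3} \cdots$ naturally groups the bispecial prefixes of $\uu$ into ``blocks'': level~$N$ will contain exactly $a_{N+1}$ bispecials, indexed by $0 \le m < a_{N+1}$, and the uniqueness of the pair $(N, m)$ mirrors the uniqueness of $\mathbf{\Delta}$.

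The base case at level $N=0$ is an explicit computation inside the initial block $D^{a_1}$. Since $\uu^{(a_1)}$ starts with $\mathtt{a}$ (its directive begins with $G^{a_2}$) and contains no $\mathtt{bb}$, one verifies that $\mathtt{b}^m$ is bispecial in $\uu$ for $0 \le m < a_1$, with prefix return word $r = \mathtt{b}$ of Parikh vector $(p_0, q_0)^T = (0, 1)^T$ and other return word $s = \mathtt{b}^m \mathtt{a}$ of Parikh vector $(m p_0 + p_{-1},\, m q_0 + q_{-1})^T = (1, m)^T$. The identity $\vec V(z) = \vec V(r) + \vec V(s) - (1, 1)^T$ then follows immediately from $(0, m)^T = (0, 1)^T + (1, m)^T - (1, 1)^T$.

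The inductive step rests on the following transfer lemma: for a composed morphism $\psi = \Delta_0 \cdots \Delta_{k-1}$ formed from a prefix of $\mathbf{\Delta}$, and any sufficiently long bispecial $z'$ of the standard Sturmian $\vv = \uu^{(k)}$, there is a unique short tail $w = w(\psi)$ such that $z = \psi(z')\,w$ is bispecial in $\uu = \psi(\vv)$, and the two return words to $z$ in $\uu$ are exactly $\psi(r')$ and $\psi(s')$, where $r', s'$ are the return words to $z'$ in $\vv$. Since Parikh vectors transform by the incidence matrix, iterating through the first $N$ complete blocks of $\mathbf{\Delta}$ yields a matrix product
$$
M_D^{a_1}\, M_G^{a_2}\, M_D^{a_3}\, \cdots
$$
(truncated after $N$ blocks), with $M_G = \mat{1}{1}{0}{1}$ and $M_D = \mat{1}{0}{1}{1}$. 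By induction on~\eqref{eq:convergents}, this is the classical continued-fraction matrix product whose columns are the convergent vectors $(p_{N-1}, q_{N-1})^T$ and $(p_N, q_N)^T$ (up to order depending on the parity of~$N$). Applying this matrix to the base-case Parikh vectors of the level-$0$ bispecial inside $\uu^{(N)}$, which involve the convergents $(p_{-1}, q_{-1})$ and the scaling~$m$, produces exactly the claimed formulas for $\vec V(r)$ and $\vec V(s)$.

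For part~(2), Vuillon's theorem guarantees that $\dd_\uu(z)$ is Sturmian, with $r$ and $s$ as its two return-word letters. The transfer lemma identifies the directive sequence of $\dd_\uu(z)$ as the suffix of $\mathbf{\Delta}$ obtained by removing the first $N$ complete blocks and the initial $m$ copies of $\Delta_N$ (these $m$ copies were absorbed into constructing $z$ from a shorter level-$N$ bispecial). Reading off the continued-fraction expansion yields the slope $[0,\, a_{N+1} - m,\, a_{N+2}, \ldots]$. The principal technical obstacle is the transfer lemma itself: one must identify the tail word~$w$ precisely, verify that the construction is a bijection exhausting all sufficiently long bispecials of~$\uu$, and correctly track which of $\psi(r')$ and $\psi(s')$ plays the role of the prefix return word at each level (this role can flip with each desubstitution, depending on whether the applied morphism is $G$ or $D$).
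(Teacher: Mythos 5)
A preliminary remark: the paper does not prove Proposition~\ref{prop:returnWords} at all --- it is imported verbatim from \cite{DvMePe20} --- so there is no internal proof to compare you against; the only question is whether your sketch is itself complete. Your strategy is the natural one (and essentially the one used in the cited literature): desubstitute along the directive sequence of Proposition~\ref{Prop_Standard}, track Parikh vectors through the incidence matrices $M_D=\mat{1}{0}{1}{1}$, $M_G=\mat{1}{1}{0}{1}$, and observe that the block product $M_D^{a_1}M_G^{a_2}\cdots$ reproduces the convergent recurrence \eqref{eq:convergents}. Your bookkeeping is correct: at level $0$ the bispecials are $\mathtt{b}^m$, $0\le m<a_1$, with return words $\mathtt{b}$ and $\mathtt{b}^m\mathtt{a}$, whose Parikh vectors are $(p_0,q_0)^T$ and $(mp_0+p_{-1},\,mq_0+q_{-1})^T$, and after $N$ complete blocks the matrix product has the convergent vectors as columns (up to order).

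The genuine gap is the one you name yourself: the transfer lemma is asserted, not proved, and it is not a technicality --- it is the entire mathematical content of the proposition, everything else being elementary arithmetic. Concretely, one must prove: (i) for $\psi\in\{G,D\}$ there is an explicit tail ($w_G=\mathtt{a}$, $w_D=\mathtt{b}$, hence $w_{D^{a_1}}=\mathtt{b}^{a_1}$) such that $z'\mapsto\psi(z')w_\psi$ sends bispecial factors of a standard sequence $\vv$ to bispecial factors of $\psi(\vv)$; (ii) this map exhausts all bispecials of $\psi(\vv)$ except the finitely many short ones treated by hand (this surjectivity is what the existence part of the statement needs); (iii) consecutive occurrences of $z'$ in $\vv$ correspond to consecutive occurrences of $\psi(z')w_\psi$ in $\psi(\vv)$ --- a synchronization/injectivity argument for $G$ and $D$ on factors --- which is precisely what makes the return words of $z$ equal to $\psi(r'),\psi(s')$ and makes $\dd_{\psi(\vv)}(z)$ coincide, up to renaming letters, with $\dd_{\vv}(z')$; without this identification part (2) does not follow from merely ``reading off'' the remaining directive sequence. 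Two smaller points: the feared ``role flip'' does not occur --- bispecials of standard sequences are prefixes, $\psi$ maps prefixes to prefixes, so $r'z'$ a prefix of $\vv$ gives $\psi(r')z$ a prefix of $\psi(\vv)$ (note, though, that ``the return word which is a prefix'' must be read as the $r$ with $rz$ a prefix of $\uu$, since for e.g.\ $z=\mathtt{b}^{a_1}$ both return words are prefixes of $\uu$ as words); and the uniqueness of $(N,m)$ should be argued from the Parikh vectors themselves (strict growth of $(q_N)$ and $0\le m<a_{N+1}$), not from ``uniqueness of $\mathbf{\Delta}$''. As it stands, the proposal is a correct plan with the central lemma missing, so it is not yet a proof.
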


To express the length of  bispecial factors and their return words in a Sturmian sequence with slope $\theta$, we use  for each $N\in \N$ the notation 

\medskip

\centerline{$Q_N: =p_N+q_N$, where $\frac{p_N}{q_N}$ is the $N^{th}$ convergent to $\theta$,}
i.e., the sequence $(Q_N)$ satisfies~\eqref{eq:convergents} with initial conditions $Q_{-1}=1=Q_0$.

\begin{remark}\label{Lenz} The formulae for computation of $E(\uu)$  and  $E^*(\uu)$ for a Sturmian sequence $\uu$  with slope $\theta = [0,a_1,a_2,a_3, \ldots]$ were provided by Daminik and Lenz in \cite{DaLe2002}, see also~\cite{CaLu2000}.
 $$ E(\uu) = 2+ \sup_{N\in \N} \Bigl\{a_{N+1} + \tfrac{Q_{N-1}-2}{Q_{N-2}} \Bigr\}\quad \text{and}\quad  E^*(\uu) = 2+ \limsup_{N\to \infty} \Bigl\{a_{N+1} + \tfrac{Q_{N-1}}{Q_{N-2}}  \Bigr\}\,. 
 $$
These formulae  can be deduced easily  
using Proposition \ref{prop:returnWords}  and Theorem \ref{Prop_FormulaForCR}.  

 
\end{remark}

In the sequel, it will be necessary to recognize which vectors are Parikh vectors of some factors in a~Sturmian sequence. The answer follows.
\begin{lemma}[{\cite{DDP21}}]
\label{lem_kl}
Let $\uu$ be a Sturmian sequence with slope $\theta=\frac{\rho_{\tt a}(\uu)}{\rho_{\tt b}(\uu)}<1$.
Denote $\delta = \theta^{-1}$. Then $\uu$ contains a factor $w$ such that $|w|_{\tt b} = k$ and $|w|_{\tt a} = \ell$ if and only if
\begin{equation}
\label{pocet}
 |\ell\delta -k| < \delta +1.
 \end{equation}
\end{lemma}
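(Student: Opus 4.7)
The plan is to identify the Sturmian sequence $\uu$ with a coding of an irrational rotation and reduce the question to a standard counting fact about such codings. Set $\alpha = \rho_{\tt b}(\uu) = \tfrac{\delta}{1+\delta}$; since $\uu$ is aperiodic, $\theta$ and hence $\alpha$ are irrational. Any Sturmian sequence with ${\tt b}$-frequency $\alpha$ arises, up to the choice of a base point $x_0\in\mathbb{T}=\mathbb{R}/\mathbb{Z}$, as the coding of the rotation $R_\alpha\colon x\mapsto x+\alpha\pmod 1$ with respect to the partition $I_{\tt b}=[0,\alpha)$, $I_{\tt a}=[\alpha,1)$; because $\uu$ is minimal (even uniformly recurrent), its language $\mathcal{L}(\uu)$ does not depend on $x_0$.

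The key step is to determine, for each length $n\geq 1$, which values of $k=|w|_{\tt b}$ are realised by factors $w$ of length $n$. A factor appearing at position $i$ satisfies $k=\#\{0\leq j<n : R_\alpha^{i+j}(x_0)\in I_{\tt b}\}$, i.e.\ the number of visits of a length-$n$ orbit to the interval $I_{\tt b}$ of length $\alpha$. A classical argument — most cleanly via the three-distance theorem, or directly by noting that $y\mapsto \#\{0\leq j<n: y+j\alpha\in I_{\tt b}\}$ is integer-valued on $\mathbb{T}$ and jumps by at most $1$ at the finitely many points $\{-j\alpha\pmod 1 : 0\leq j<n\}$ — shows that this count takes exactly the two values $\lfloor n\alpha\rfloor$ and $\lceil n\alpha\rceil$, and that each is attained for some $y$, hence (by minimality of $\uu$) for some factor of $\uu$. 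Irrationality of $\alpha$ ensures $n\alpha\notin\mathbb{Z}$ for every $n\geq 1$, so both values lie strictly within distance $1$ of $n\alpha$. Consequently, a word with $|w|_{\tt a}=\ell$ and $|w|_{\tt b}=k$ is a factor of $\uu$ if and only if, setting $n=\ell+k$, one has the strict inequality $|k-n\alpha|<1$; the case $n=0$, corresponding to the empty word, is trivial.

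A one-line algebraic manipulation then converts this to the stated form. Using $\alpha=\tfrac{\delta}{1+\delta}$ and $n=\ell+k$,
\[
k - n\alpha \;=\; k - (\ell+k)\tfrac{\delta}{1+\delta} \;=\; \tfrac{k-\ell\delta}{1+\delta},
\]
so $|k-n\alpha|<1$ is equivalent to $|\ell\delta - k|<1+\delta$, which is exactly \eqref{pocet}. The only nontrivial point in the whole argument — the main obstacle, in my view — is the counting statement of the second paragraph: proving cleanly that the number of visits of a length-$n$ orbit of $R_\alpha$ to $I_{\tt b}$ must equal $\lfloor n\alpha\rfloor$ or $\lceil n\alpha\rceil$, with both values actually occurring. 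This is classical but requires some care about the endpoint of $I_{\tt b}$; once it is established, everything else is routine.
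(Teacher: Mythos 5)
Your proof is correct. Note that the paper does not prove this lemma at all — it is imported verbatim from \cite{DDP21} — so there is no in-paper argument to compare against; your reduction to the rotation coding (letter frequencies $\alpha=\rho_{\tt b}=\tfrac{\delta}{1+\delta}$, factors of length $n$ containing exactly $\lfloor n\alpha\rfloor$ or $\lceil n\alpha\rceil$ letters ${\tt b}$, both values attained, followed by the algebraic rewriting of $|k-n\alpha|<1$ into $|\ell\delta-k|<\delta+1$) is the standard proof of this statement and matches the spirit of the cited source. The only point you flag as delicate, the two-value counting claim, is indeed fine: it is exactly the balance property of Sturmian words, and your ``direct'' argument goes through once one observes that the discontinuity points of $y\mapsto\#\{0\leq j<n:\ y+j\alpha\in I_{\tt b}\}$ coming from the two endpoints of $I_{\tt b}$ cancel in pairs (the entry of the $j$-th point coincides with the exit of the $(j+1)$-st), leaving exactly one upward and one downward jump of size $1$ on the circle; irrationality of $\alpha$ then places the two resulting values strictly within distance $1$ of $n\alpha$.
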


\section{Balanced sequences }
\label{Section_BalancedSturmian1}

In 2000 Hubert~\cite{Hubert00} characterized balanced sequences over alphabets of cardinality bigger than 2 in terms of Sturmian sequences, \textit{colourings}, and \textit{constant gap sequences}.

\begin{definition}\label{def:colouring}
Let $\uu$ be a sequence over $\{\mathtt{a},\mathtt{b}\}$, $\yy$ and $\yy'$ be arbitrary sequences. The \emph{colouring} of $\uu$ by $\yy$ and $\yy'$ is the sequence $\vv = \barva( \uu, \yy, \yy')$ obtained from $\uu$ by replacing the subsequence of all $\mathtt{a}$'s with $\yy$ and the subsequence of all $\mathtt{b}$'s with $\yy'$.
\end{definition}

\begin{definition}
\label{Def_ConstantGap}
A~sequence $\yy$ is a \emph{constant gap sequence} if for each letter $a$ occurring in $\yy$ the distance between any consecutive occurrences of $a$ in $\yy$ is  constant.
\end{definition}

There is a rich literature on a notion equivalent to constant gap sequences, the so-called exact covering systems, see~\cite{Znam1969, PoSch2002, GoGrBrSh2019}.

\begin{example}
\label{ex:sequences} The periodic  sequences  $\yy =(\texttt{34})^\omega$ and  $\yy'=(\texttt{0102})^\omega$ are constant gap sequences over binary and ternary alphabet, respectively. 
The sequence $\vv = \barva( \uu_f, \yy, \yy')$, where $\uu_f$ is the Fibonacci sequence defined in Example~\ref{ex:FiboDef}, looks as follows:
\begin{align*}
\uu_f &= \mathtt{babbababbabbababbababbabbababba}\cdots \\
\vv &= \mathtt{ \textcolor{red}{0}\textcolor{blue}{3}\textcolor{red}{10}\textcolor{blue}{4}\textcolor{red}{2}\textcolor{blue}{3}\textcolor{red}{01}\textcolor{blue}{4}\textcolor{red}{02}\textcolor{blue}{3}\textcolor{red}{0}\textcolor{blue}{4}\textcolor{red}{10}\textcolor{blue}{3}\textcolor{red}{2}\textcolor{blue}{4}\textcolor{red}{01}\textcolor{blue}{3}\textcolor{red}{02}\textcolor{blue}{4}\textcolor{red}{0}\textcolor{blue}{3}\textcolor{red}{10}\textcolor{blue}{4}} \cdots
\end{align*}
\end{example}

\begin{theorem}[{\cite{Hubert00}}]
\label{Hubert}
A recurrent aperiodic sequence $\vv$ is balanced if and only if $\vv = \barva( \uu, \yy, \yy')$ for some Sturmian sequence $\uu$ and constant gap sequences ${\yy}, {\yy'}$ over two disjoint alphabets.
\end{theorem}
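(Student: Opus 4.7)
The plan is to prove the two implications separately. The easier direction is ``if'': starting from $\vv = \colour(\uu, \yy, \yy')$ as in the statement, I will verify balance by a direct count of letters. For the harder ``only if'' part, the plan is to recover $\uu$, $\yy$ and $\yy'$ from $\vv$ by partitioning the alphabet of $\vv$ according to letter frequencies, and then to check that each ingredient has the required shape.

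For the ``if'' direction, fix a letter $c$ and two factors $v_1, v_2 \in \LL(\vv)$ of the same length; without loss of generality $c$ lies in the alphabet of $\yy$. Let $u_1, u_2 \in \LL(\uu)$ be the factors of $\uu$ sitting at the same index positions. By Definition~\ref{def:colouring}, the count $|v_i|_c$ equals the number of $c$'s in a suitable factor of $\yy$ of length $\alpha_i := |u_i|_{\tt a}$. Since $\yy$ is a constant gap sequence, every factor of $\yy$ of length $m$ contains either $\lfloor m/g_c \rfloor$ or $\lceil m/g_c \rceil$ occurrences of $c$, where $g_c$ is the gap of $c$ in $\yy$. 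Balance of $\uu$ (which is Sturmian) gives $|\alpha_1 - \alpha_2| \le 1$, and a short case check on the residues of $\alpha_1, \alpha_2$ modulo $g_c$ then shows that $|v_1|_c$ and $|v_2|_c$ differ by at most one. The case $c$ in the alphabet of $\yy'$ is symmetric, so $\vv$ is balanced.

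For the converse, let $\vv$ be a recurrent aperiodic balanced sequence over an alphabet $\A$. Balance forces each letter $c \in \A$ to admit a well-defined frequency $\rho_c$. The key structural step is to show that, since $\vv$ is aperiodic yet balanced, the set $\{\rho_c : c \in \A\}$ spans a $\Q$-subspace of $\mathbb R$ of dimension exactly two: aperiodicity rules out dimension one, while a higher dimension would produce, by an averaging argument applied to long factors, two factors of equal length whose count of some letter differs by more than one. This yields a splitting $\A = \A_0 \sqcup \A_1$ into two nonempty classes, each class having all its frequencies lying in a single $\Q$-line. Let $\uu$ be the sequence over $\{{\tt a},{\tt b}\}$ obtained from $\vv$ by relabelling every letter of $\A_0$ as ${\tt a}$ and every letter of $\A_1$ as ${\tt b}$. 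Balance of $\vv$ transfers to balance of $\uu$, and the irrationality of $\rho_{\tt b}/\rho_{\tt a}$ (a consequence of the dimension-two splitting together with aperiodicity) makes $\uu$ a Sturmian sequence. Letting $\yy$, respectively $\yy'$, be the subsequence of $\vv$ formed by the letters of $\A_0$, respectively $\A_1$, in their order of occurrence, the identity $\vv = \colour(\uu, \yy, \yy')$ holds by construction.

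The main obstacle will be the final step: showing that $\yy$ and $\yy'$ are constant gap sequences. Balance of $\vv$ pulls back to balance of $\yy$, but balance over an alphabet of size greater than two is strictly weaker than the constant gap condition. The extra leverage must come from the rational dependence among the frequencies inside $\A_0$: each letter $c \in \A_0$ satisfies $\rho_c = (m_c/n_c)\rho_{\tt a}$ for some coprime positive integers $m_c, n_c$. Combining this dependence with the pulled-back balance, the plan is to show that every letter $c \in \A_0$ occupies a single arithmetic progression of step $n_c/m_c$ in $\yy$; equivalently, the partition of $\N$ by letter types in $\yy$ realises an exact covering system. This is the technically heaviest part and corresponds to the bulk of Hubert's original argument, with the conclusion for $\yy'$ following by symmetry.
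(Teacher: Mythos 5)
First, note that the paper does not prove this statement at all: it is Hubert's theorem, quoted verbatim with the citation \cite{Hubert00}, so there is no in-paper argument to compare against; your attempt has to be judged as a self-contained proof of Hubert's result. Your ``if'' direction is essentially fine: the count of a letter $c$ of $\yy$ in a factor of $\vv$ is the count of $c$ in a factor of $\yy$ of length $|u|_{\tt a}$, constant gaps give the $\lfloor\cdot\rfloor/\lceil\cdot\rceil$ bound, and balance of the Sturmian $\uu$ closes the case check.

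The ``only if'' direction, however, is a plan with the central difficulties left open, so there is a genuine gap. Concretely: (1) the ``key structural step'' is only asserted. You give no actual averaging argument for why the $\Q$-span of the frequencies has dimension exactly two, and ruling out dimension one (recurrent balanced with all frequencies rational $\Rightarrow$ periodic) is itself a nontrivial theorem; worse, even granting dimension two, this does \emph{not} ``yield'' the splitting you claim. Dimension two only places every $\rho_c$ in $\Q+\Q\alpha$ for some irrational $\alpha$; the assertion that the frequencies lie on exactly two $\Q$-lines $\Q\rho_{\tt a}$ and $\Q\rho_{\tt b}$ (rather than being arbitrary combinations $a+b\alpha$) is a strong structural fact that is essentially the content of the theorem, and nothing in your sketch delivers it. (2) ``Balance of $\vv$ transfers to balance of $\uu$'' is not automatic: summing the per-letter balance inequalities over $\A_0$ only bounds the difference of ${\tt a}$-counts in equal-length factors by $\#\A_0$, not by $1$, so the Sturmianity of the projection needs a real argument. (3) You explicitly defer the proof that $\yy$ and $\yy'$ have constant gaps, which you correctly identify as the bulk of Hubert's work; this is where the classification of binary balanced sequences (Morse--Hedlund) and the theory of exact covering systems actually enter, and without it the statement is not proved. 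As it stands, the hard implication is an outline whose three key steps are each unproved, and the second of them is stated as if it were immediate when it is not.
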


\medskip
Let $\A,\B$ be two disjoint alphabets. The ``discolouration map'' $\pi$ is defined for any word or sequence over $\A\cup \B$; it replaces all letters from $\A$ by $\mathtt{a}$ and all letters from $\B$ by $\mathtt{b}$. If  $\vv = \barva( \uu, \yy, \yy')$, where $\yy\in \A^{\mathbb{N}}$, $\yy'\in \B^{\mathbb{N}}$, then $\pi(\vv)=\uu$ and $\pi(v) \in \mathcal{L}(\uu)$ for every $v \in \LL(\vv)$.

\begin{corollary}[{\cite{DDP21}}]
\label{cor:cyclicshift}
Let $\vv = \barva( \uu, \yy, \yy')$ and $u \in \mathcal{L}(\uu)$.
For any $i,j \in \mathbb N$, the word $v$ obtained by colouring $u$ with shifted constant gap sequences 
$\sigma^i(\yy)$ and $\sigma^j(\yy')$ is in $\mathcal{L}(\vv)$. In particular, if a  Sturmian sequence $\widetilde{\uu}$ has the same language as  $\uu$, then, $E(\vv) = E(\widetilde{\vv})$ and  $E^*(\vv) = E^*(\widetilde{\vv})$.
\end{corollary}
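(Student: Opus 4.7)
The statement has two parts. The ``in particular'' consequence follows immediately from the main claim: since $E$ and $E^*$ depend only on the language, it suffices to show $\mathcal L(\vv) = \mathcal L(\widetilde \vv)$ where $\widetilde \vv = \barva(\widetilde \uu, \yy, \yy')$. Every factor of $\widetilde \vv$ is obtained by colouring a factor of $\widetilde \uu$, i.e.\ of $\uu$ (the languages coincide), with certain shifts of $\yy$ and $\yy'$, and by the main claim this coloured word lies in $\mathcal L(\vv)$; symmetry gives the reverse inclusion. It therefore remains to establish the main claim.

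Let $p = \Per\, \yy$ and $q = \Per\, \yy'$; both are finite since constant gap sequences are purely periodic. Fix $u \in \mathcal L(\uu)$ and $i, j \in \N$. Directly from the definition of the colouring, an occurrence of $u$ at position $M$ in $\uu$ produces in $\vv$ the colouring of $u$ by $\sigma^{a_M}(\yy)$ and $\sigma^{b_M}(\yy')$, where $a_M = |u_0\cdots u_{M-1}|_{\mathtt{a}}$ and $b_M = |u_0\cdots u_{M-1}|_{\mathtt{b}}$. Because $\yy$ and $\yy'$ are periodic with periods $p$ and $q$, this coloured word depends only on the residue pair $\Phi(M) := (a_M \bmod p,\, b_M \bmod q) \in \mathbb Z/p \times \mathbb Z/q$. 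Hence the main claim reduces to the surjectivity statement: as $M$ ranges over occurrences of $u$ in $\uu$, the map $\Phi$ hits every element of $\mathbb Z/p \times \mathbb Z/q$.

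The key ingredient is the structure of return words in Sturmian sequences. By uniform recurrence of $\uu$, $u$ has infinitely many occurrences, and the difference $\Phi(M_{k+1}) - \Phi(M_k)$ equals the Parikh vector of the intervening return word, reduced mod $(p,q)$. Vuillon's theorem gives exactly two return words $r_1, r_2$ to $u$, whose Parikh vectors $\vec v_1, \vec v_2$ form a $\mathbb Z$-basis of $\mathbb Z^2$: for bispecial $u$ this is Proposition~\ref{prop:returnWords} (the determinant $p_N q_{N-1} - q_N p_{N-1} = \pm 1$), and for non-bispecial $u$ the same conclusion follows by extending $u$ to a bispecial factor whose return words generate the same $\mathbb Z$-lattice in $\mathbb Z^2$.

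The remaining, and only mildly subtle, point is that the total shift accumulated between the first occurrence of $u$ and an arbitrary later occurrence must be a \emph{non-negative} integer combination $\alpha \vec v_1 + \beta \vec v_2$. Given any target residue in $\mathbb Z/p \times \mathbb Z/q$, Bezout on the $\mathbb Z$-basis $\{\vec v_1, \vec v_2\}$ produces an integer combination realising it; one then adds a sufficiently large non-negative multiple of $pq(\vec v_1 + \vec v_2)$, which is congruent to $(0,0) \pmod{(p,q)}$, to make both coefficients non-negative without changing the residue class. The resulting concatenation of return words yields an occurrence of $u$ in $\uu$ whose residue under $\Phi$ is exactly the target, proving surjectivity and hence the corollary.
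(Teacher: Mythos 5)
Your reduction is sound up to the last step: the coloured word produced at an occurrence $M$ of $u$ depends only on the pair $\bigl(|u_0\cdots u_{M-1}|_{\tt a} \bmod P,\ |u_0\cdots u_{M-1}|_{\tt b} \bmod P'\bigr)$, the increment between consecutive occurrences is the Parikh vector of the intervening return word, and the two return-word Parikh vectors $\vec v_1,\vec v_2$ form a $\mathbb Z$-basis of $\mathbb Z^2$ (for bispecial factors this is Proposition~\ref{prop:returnWords}; the passage from a general factor to its bispecial closure is a standard, fixable point, as is the passage from standard to general Sturmian sequences, since return words depend only on the language). The genuine gap is in the surjectivity argument. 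The accumulated shift between the first occurrence of $u$ and a later one is $\alpha\vec v_1+\beta\vec v_2$, where $(\alpha,\beta)$ is the Parikh vector of a \emph{prefix of the derived sequence} $\dd_\uu(u)$: the order and multiplicities of the return words are dictated by $\dd_\uu(u)$, which is itself Sturmian. You cannot concatenate return words at will, so the Bezout-plus-padding step fails: the pairs $(\alpha+NPP',\ \beta+NPP')$ have coordinate ratio tending to $1$, while the realizable pairs form a staircase of irrational slope (the letter frequencies of $\dd_\uu(u)$), so for all large $N$ the padded pair is \emph{not} the Parikh vector of any prefix of $\dd_\uu(u)$, and the ``resulting concatenation of return words'' is in general not even a factor of $\uu$. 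Hence no occurrence of $u$ with the target residue has been produced.

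What actually remains to be proved is that the prefix Parikh vectors of the Sturmian sequence $\dd_\uu(u)$ meet every coset of the index-$PP'$ sublattice $\{(\alpha,\beta)\in\mathbb Z^2:\ \alpha\vec v_1+\beta\vec v_2\equiv(0,0)\ \bmod\ (P,P')\}$. This is a statement of essentially the same nature and difficulty as the corollary itself: the realizable pairs $(\alpha,\beta)$ have density zero in $\N^2$, so hitting every coset requires a genuine equidistribution or continued-fraction argument (e.g.\ via the rotation representation of Sturmian sequences, or by iterating the derivation), not an integrality argument. The paper itself does not prove the corollary but cites~\cite{DDP21}; measured against a complete proof, your attempt correctly isolates the key surjectivity claim but does not establish it.
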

\begin{example}
Let $\uu_f, \vv, \yy$ and $\yy'$ be as in Example~\ref{ex:sequences}.
Let $u = {\tt bab}$.
The reader is invited to check that all words ${\tt 031}, {\tt 130}, {\tt 032}, {\tt 230}, {\tt 041}, {\tt 140}, {\tt 042}, {\tt 240}$ are factors of $\vv$.
\end{example}

As a consequence of the previous corollary when studying the asymptotic critical exponent of balanced sequences, we can limit our consideration to colourings of standard Sturmian sequences.

\begin{proposition}[{\cite{DDP21}}]\label{prop:bounds} Let $\vv=\colour(\uu,\yy,\yy')$ and $\theta=[0,a_1,a_2,a_3,\ldots]$.
One has:
\begin{enumerate}
    \item $E(\vv) \geq E^*(\vv) \geq 1 +\frac{1}{\Per\, {\yy}\cdot \Per\, {\yy'}}\,.$
    \item $E^*(\vv)$ depends on $\Per\, {\yy}$ and $\Per\, {\yy'}$, not on the structure of $\yy$ and $\yy'$.
    \item $E^*(\vv)$ is finite $\Leftrightarrow$ $E^*(\uu)$ is finite $\Leftrightarrow$ $(a_n)$ is a bounded sequence.
\end{enumerate}
\end{proposition}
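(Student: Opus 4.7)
My plan is to prove the three parts separately, each by a different mechanism.

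\textbf{Part 1.} The inequality $E(\vv) \geq E^*(\vv)$ is immediate from the definitions (a supremum dominates a $\limsup$). For $E^*(\vv) \geq 1 + 1/P$ with $P := \Per\yy\cdot\Per\yy'$, I would use a pigeonhole argument on ``colouring types''. Fix a bispecial factor $u$ of $\uu$ of large length $N$; by Proposition~\ref{ParikhRSB1}, its two return words $r,s$ satisfy $|r|+|s|=N+2$, so $\max(|r|,|s|)\leq N+1$. Starting from any occurrence of $u$ in $\uu$ and applying $P$ successive return words yields consecutive occurrences $i_0<\cdots<i_P$ of $u$ with $i_P-i_0\leq P(N+1)$. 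Attach to each $i_t$ the type $\bigl(|\uu_0\cdots\uu_{i_t-1}|_{\tt a}\bmod\Per\yy,\ |\uu_0\cdots\uu_{i_t-1}|_{\tt b}\bmod\Per\yy'\bigr)$, an element of a set of $P$ elements; pigeonhole yields $k<l$ with identical types, forcing the colourings of $u$ at $i_k$ and $i_l$ to coincide. Hence $\vv$ contains a factor of length $N+(i_l-i_k)$ with period $i_l-i_k\leq P(N+1)$, whose exponent is at least $1+N/(P(N+1))\to 1+1/P$ as $N\to\infty$.

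\textbf{Part 2.} I would show that asymptotically the periods realized by factors of $\vv$ are exactly the periods $\tau$ of factors of $\uu$ satisfying $\Per\yy\mid\tau_a$ and $\Per\yy'\mid\tau_b$, where $\tau_a,\tau_b$ count the letters in one period-block. Sufficiency is direct: for such $u\in\mathcal L(\uu)$, any occurrence yields a colouring of period $\tau$ in $\vv$, since $\yy[k]=\yy[k+\tau_a]$ and $\yy'[k]=\yy'[k+\tau_b]$ hold for \emph{every} index. Necessity is subtler: if $\tilde w\in\mathcal L(\vv)$ has period $\tau$, then $u=\pi(\tilde w)\in\mathcal L(\uu)$ has period $\tau$, and the colouring requires $\yy[k]=\yy[k+\tau_a]$ over the range of ${\tt a}$-indices in $\tilde w$; should $\Per\yy\nmid\tau_a$, this equation fails on a residue class of positive density in $\mathbb{Z}/\Per\yy$, so for $|\tilde w|$ large enough (using that the letter frequencies in the Sturmian $\uu$ are positive) the range must hit a failing index, a contradiction. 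Symmetrically for $\tau_b$. Since the divisibility conditions depend only on $(\Per\yy,\Per\yy')$, the induced set of realizable $\tau$ determines $E^*(\vv)$.

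\textbf{Part 3.} The equivalence $E^*(\uu)<\infty\iff(a_n)$ bounded is the Damanik--Lenz formula (Remark~\ref{Lenz}). For $E^*(\vv)<\infty\iff E^*(\uu)<\infty$, one direction is immediate: the discolouration $\pi$ is a length-preserving coding, so any $v^e\in\mathcal L(\vv)$ with $|v|=n$ gives $\pi(v)^e\in\mathcal L(\uu)$ with $|\pi(v)|=n$, yielding $E^*(\vv)\leq E^*(\uu)$. Conversely, if $(a_n)$ is unbounded, then Lenz's formula combined with Theorem~\ref{Prop_FormulaForCR} produces bispecial factors $u_N$ of $\uu$ with $|u_N|\to\infty$ and $|u_N|/|r_N|\to\infty$; the type-cycling version of the Part~1 pigeonhole (the types at positions $0,|r_N|,2|r_N|,\ldots$ form a cyclic orbit of size at most $P$ in $\mathbb{Z}/\Per\yy\times\mathbb{Z}/\Per\yy'$) shows that the coloured repetitive segment around $u_N$ has period at most $P|r_N|$, hence exponent at least (up to constants) $|u_N|/(P|r_N|)\to\infty$. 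Therefore $E^*(\vv)=\infty$.

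The principal obstacle is Part~2: one must argue carefully that ``accidental'' coincidences in constant-gap sequences (e.g.\ $\yy=(0102)^\omega$ has $\yy[0]=\yy[2]$ even though $\Per\yy=4$) cannot propagate over arbitrarily long factors of $\vv$, so that only the minimal periods $\Per\yy$ and $\Per\yy'$ govern the asymptotic critical exponent.
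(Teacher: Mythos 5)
Your Parts 1 and 3 go through essentially as written: the pigeonhole on the pair of letter-counts taken modulo $(\Per\,\yy,\Per\,\yy')$ is sound, and it is in fact the same congruence mechanism that the paper later formalizes (for return words to bispecial factors) via Observation~\ref{obs:special} and the set $\mathcal{S}_1(N,m)$ in Proposition~\ref{vzorec}; note the paper itself does not prove Proposition~\ref{prop:bounds} but cites \cite{DDP21}, so your direct construction of long powers is a legitimate, more elementary route. The step that fails as literally stated is the necessity half of Part~2. If $\tilde w\in\mathcal L(\vv)$ has period $\tau$, the identity $\yy[k]=\yy[k+\tau_a]$ is forced only for those $\mathtt a$-indices $k$ coming from positions $j$ of $\tilde w$ with $j+\tau<|\tilde w|$, i.e.\ from the prefix of length $|\tilde w|-\tau$, not ``over the range of $\mathtt a$-indices in $\tilde w$''. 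Hence ``for $|\tilde w|$ large enough the range must hit a failing index'' is not true: a factor can be arbitrarily long while $|\tilde w|-\tau$ stays bounded (exponent tending to $1$), in which case the constrained range of $\mathtt a$-indices never reaches length $\Per\,\yy$ and an accidental coincidence of the type you mention for $(\mathtt{0102})^\omega$ can persist, so $\Per\,\yy\mid\tau_a$ need not hold. As stated, your characterization of the ``asymptotically realizable'' periods is therefore false, and item~2 does not yet follow.

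The missing idea is to use your own Part~1 to discard exactly these troublesome factors. Fix $0<\epsilon<\frac{1}{\Per\,\yy\cdot\Per\,\yy'}$; since $E^*(\vv)\geq 1+\frac{1}{\Per\,\yy\cdot\Per\,\yy'}>1+\epsilon$, the $\limsup$ defining $E^*(\vv)$ is unchanged if one only records factors $\tilde w=v^{e}$ with $e\geq 1+\epsilon$. For such factors $|\tilde w|-\tau\geq\epsilon\tau\to\infty$, so (positive letter frequencies in the Sturmian $\uu$) the constrained block of consecutive $\mathtt a$-indices eventually exceeds $\Per\,\yy$ in length, your positive-density argument applies, and the divisibility conditions do follow; symmetrically for $\mathtt b$. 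Combined with the sufficiency direction — which transports not only the period $\tau$ but also the length of the factor, a point you should state explicitly, since it is the pair (period, length), not the set of periods alone, that determines exponents — this shows that for all large $n$ the maximal exponent $\geq1+\epsilon$ realizable with root length $n$ is the same for any two colourings of $\uu$ with equal periods, which yields item~2. (In Part~3, also record that the root lengths $c|r_N|$ of your constructed powers tend to infinity, which holds because return words to longer and longer bispecial factors of a Sturmian sequence have lengths $Q_N\to\infty$ by Proposition~\ref{prop:returnWords}; otherwise those exponents would not enter the $\limsup$.)
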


Hoffman et al. proved that any $d$-ary constant gap sequence satisfies $\Per\, \yy \leq 2^{d-1}$ (see Theorem 2 in~\cite{Hof2011} based on Corollary 2 from~\cite{Si85}). Therefore, the first item of Proposition~\ref{prop:bounds} leads to the following corollary.
\begin{corollary}\label{coro:lower_bound}
$RTB^*(d)\geq 1+ \frac{1}{2^{d-2}}$ for every positive integer $d$.
\end{corollary}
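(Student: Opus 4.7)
The proof is essentially an assembly of ingredients that have already appeared in the paper, so the plan is short and the difficulty lies only in handling alphabet bookkeeping cleanly.

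The plan is as follows. Fix $d\geq 2$ and an arbitrary balanced sequence $\vv$ over a $d$-ary alphabet; we want $E^*(\vv)\geq 1+\frac{1}{2^{d-2}}$, after which taking the infimum yields the corollary. First I would dispose of the trivial cases: if $\vv$ is eventually periodic, then some non-empty word is arbitrarily repeated as a factor, so $E(\vv)=+\infty=E^*(\vv)$ and the bound is vacuous. If $\vv$ uses only $d'<d$ letters, then it is also a balanced sequence over a $d'$-ary alphabet, and since the desired lower bound is monotone in the alphabet size ($1+2^{-(d'-2)}\geq 1+2^{-(d-2)}$), it suffices to prove the claim for $\vv$ using exactly $d$ distinct letters. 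For an aperiodic balanced $\vv$, one can also reduce to the recurrent case by passing to a suitable shift (or invoke the fact that the asymptotic exponent depends only on the tail); either way, Hubert's characterisation becomes applicable.

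Now I apply Theorem~\ref{Hubert} to write $\vv=\colour(\uu,\yy,\yy')$ where $\uu$ is Sturmian and $\yy,\yy'$ are constant gap sequences over two disjoint alphabets $\A,\B$. Denoting $d_1=\#\A$ and $d_2=\#\B$, we have $d_1+d_2=d$ and $d_1,d_2\geq 1$ (since $\uu$ contains both letters $\tt a$ and $\tt b$). The quoted bound from Hoffman et al.\ on exact covering systems gives
\[
\Per\,\yy\leq 2^{d_1-1}\qquad\text{and}\qquad \Per\,\yy'\leq 2^{d_2-1},
\]
so that $\Per\,\yy\cdot\Per\,\yy'\leq 2^{d_1+d_2-2}=2^{d-2}$.

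Finally, by item (1) of Proposition~\ref{prop:bounds},
\[
E^*(\vv)\;\geq\; 1+\frac{1}{\Per\,\yy\cdot\Per\,\yy'}\;\geq\;1+\frac{1}{2^{d-2}}.
\]
Taking the infimum over all balanced $\vv$ on a $d$-ary alphabet yields $RTB^*(d)\geq 1+\frac{1}{2^{d-2}}$, which is exactly the corollary. There is no real obstacle: everything needed is a one-line combination of Hubert's decomposition, the Hoffman et al.\ bound on constant gap periods, and the Parikh-vector lower bound already established in Proposition~\ref{prop:bounds}. The only care required is the alphabet-size accounting $d_1+d_2=d$ and the reduction to the aperiodic recurrent setting, both of which are routine.
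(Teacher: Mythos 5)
Your proposal is correct and follows exactly the paper's route: decompose $\vv$ via Hubert's theorem as $\colour(\uu,\yy,\yy')$, bound $\Per\,\yy\cdot\Per\,\yy'\leq 2^{d_1-1}\cdot 2^{d_2-1}=2^{d-2}$ using the Hoffman et al.\ result, and conclude with item (1) of Proposition~\ref{prop:bounds}. The extra bookkeeping you include (eventually periodic case, fewer letters, recurrence) only makes explicit what the paper leaves implicit.
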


\medskip The {\bf main task} we solve in the paper is the following: for given $P$ and $P'$  find  a  balanced sequence $\vv$ such that its asymptotic critical exponent  $E^*(\vv)$ has  the least value among all balanced sequences which arise as  colouring of Sturmian sequences  by two constant gap sequences $\yy$ and $\yy'$ with  $P = \Per \,\yy$ and $P' = \Per\,\yy'$.

Having a method for solving this task, we are able to determine $RTB^*(d)$ for a fixed $d$. We apply the method to all pairs of  $P, P'$ such that $P= \Per\,\yy $ for some constant gap sequence $\yy$ over $d_{\tt a}$-ary alphabet and  $P'= \Per\,\yy' $ for some constant gap  sequence $\yy'$ over $d_{\tt b}$-ary alphabet, where $d_{\tt a}+ d_{\tt b} = d$. 

For a fixed $d$ there are only finitely many pairs $P,P'$ with the described property. It seems that to find the periods of  all constant gap sequences over a $d$-ary alphabet is a  difficult problem.
Nevertheless, it is known (see \cite{Schnabel2015})  that constant gap sequences over $d$ letters with $d \leq 12$ may be obtained by {\em{interlacing}}. 
\begin{definition}
Let $\mathcal{A}^{(0)}, \mathcal{A}^{(1)}, \ldots, \mathcal{A}^{(k-1)}$  be mutually disjoint alphabets and let   $\yy^{(i)} = y_0^{(i)}y_1^{(i)}y_2^{(i)}\cdots$ be  a constant gap sequence over the alphabet   $\mathcal{A}^{(i)}$ for every $i \in\{0,1,\ldots, k-1\}$.
The {\em interlacing} of  $\yy^{(0)}, \yy^{(1)}, \ldots, \yy^{(k-1)}$ is the sequence  $\yy =y_0y_1y_2\cdots$, where $y_{kn+j} = y^{(j)}_n $  for every $n\in \N$  and $j \in \{0,1,\ldots, k-1\}$. 
 
 In other words, the interlacing of $\yy^{(0)}, \yy^{(1)}, \ldots, \yy^{(k-1)}$ is a sequence obtained by listing  step by step the first letters of $\yy^{(0)}, \yy^{(1)}, \ldots, \yy^{(k-1)}$, then the second letters, the third letters etc. 
\end{definition}

The interlacing $\yy$ of  $\yy^{(0)}, \yy^{(1)}, \ldots, \yy^{(k-1)}$ satisfies $$\Per\, {\yy}=k\cdot \lcm \{\Per{\,\yy^{(0)}}, \Per{\,\yy^{(1)}}, \ldots, \Per{\,\yy^{(k-1)}}\}.$$

\begin{example}
The interlacing of $\yy^{(0)}= (\tt 0102)^\omega$ and $\yy^{(1)}=(\tt 34)^\omega$ equals $(\tt 03140324)^{\omega}$. The reader may easily verify that it is again a constant gap sequence and its period equals $2\cdot \lcm \{\Per{\,\yy^{(0)}}, \Per{\,\yy^{(1)}}\}=8$.
\end{example}

\begin{remark}\label{rem:periods_const_gap}
Using the fact that all constant gap sequences with at most 12 letters are obtained by interlacing, it is not difficult to verify that the periods of constant gap sequences over alphabet of size $d$ are as follows:
$$\begin{array}{l|l}
      d& \text{period}  \\
      \hline
    1 &1 \\
    2&2\\
    3 & 3,4\\
4& 4, 6, 8\\
5& 5, 6, 8, 9, 12, 16\\
 6& 6, 8, 10, 12, 16, 18, 24, 32\\
7& 7, 8, 9, 10, 12, 15, 16, 18, 20, 24, 27, 32, 36, 48, 64\\
8& 8, 10, 12, 14, 16, 18, 20, 24, 30, 32, 36, 40, 48, 54, 64, 72, 96, 128\\
9 & 9, 10, 12, 14, 15, 16, 18, 20, 21, 24, 25, 27, 28, 30, 32, 36, 40, 45, \\
& 48, 54, 60, 64, 72, 80, 81, 96, 108, 128, 144, 192, 256
\end{array}
$$
\end{remark}

\section{Formula for the asymptotic critical exponent of balanced sequences} \label{sec:formulaE*}
In this section,   $\theta = [0,a_1,a_2,a_3, \ldots]$ denotes the continued fraction of the slope of a standard Sturmian sequence $\uu$,  $P = \Per \,\yy$ and $P' = \Per \,\yy'$  are  periods of two constant gap sequences  and $\vv = \colour(\uu, \yy, \yy')$. 
The computation of the asymptotic critical exponent of $\vv$ is based on the knowledge of bispecial factors of the Sturmian sequence $\uu$ and their return words. To provide an explicit formula let us first fix some notation. 

\medskip 
\noindent {\bf Convention:}  Given positive $c,d \in \N $. 

\begin{itemize}
    \item   If $a_1 = b_1 \bmod c  \text{ and } a_2 = b_2 \bmod d$,
     we write 
$$\left( \begin{smallmatrix} a_1 \\ a_2\end{smallmatrix} \right)  =  \left( \begin{smallmatrix} b_1 \\ b_2  \end{smallmatrix} \right) \bmod \left( \begin{smallmatrix} c \\ d \end{smallmatrix} \right).$$
    \item If $\left( \begin{smallmatrix} a_{1i} \\ a_{2i}\end{smallmatrix} \right)  =  \left( \begin{smallmatrix} b_{1i} \\ b_{2i}  \end{smallmatrix} \right) \bmod \left( \begin{smallmatrix} c \\ d \end{smallmatrix} \right)$  for $i=1, 2$, we write 
$$\left( \begin{smallmatrix} a_{11} & a_{12} \\ a_{21} & a_{22} \end{smallmatrix} \right)  =  \left( \begin{smallmatrix} b_{11} & b_{12} \\ b_{21} & b_{22} \end{smallmatrix} \right) \bmod \left( \begin{smallmatrix} c \\ d \end{smallmatrix} \right).$$
    
\end{itemize}

  A formula for computation of $E^*(\vv)$ is deduced in {\cite{DDP21}}. To keep this paper self-contained, we provide a sketch of its proof. It is based on the following simple observation. 

\begin{observation}\label{obs:special}
Let  $v$ be a factor of  $\vv$ and $u=\pi(v)$, where $|u|_{\tt a} \geq P, \ |u|_{\tt b}\geq P'$. 
Then 

\begin{enumerate}
\item $v$ is bispecial in $\vv$ if and only if $u$ is bispecial in  $\uu$;

\item if $i\in \N$ is an occurrence of $v$ in $\vv$, then $i$ is an occurrence of $u$ in $\uu$.
\end{enumerate}
\end{observation}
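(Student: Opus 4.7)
The plan is to dispose of item (2) first and then prove each implication of item (1) separately. Item (2) is immediate: the map $\pi$ is applied coordinate-wise to sequences, so an occurrence $i$ of $v$ in $\vv$ gives $\vv_i\vv_{i+1}\cdots \vv_{i+|v|-1} = v$, and applying $\pi$ to both sides yields $\uu_i\uu_{i+1}\cdots \uu_{i+|v|-1} = u$, so $i$ is an occurrence of $u$ in $\uu$.

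For the forward direction of item (1), I would assume that $u$ is bispecial in $\uu$, so that $u{\tt a}, u{\tt b} \in \LL(\uu)$ and analogously on the left. Fix any occurrence of $v$ in $\vv$; reading off its $\A$- and $\B$-subsequences realizes $v$ as the colouring of $u$ by appropriate shifts $\sigma^{i_0}(\yy)$ and $\sigma^{j_0}(\yy')$. Corollary~\ref{cor:cyclicshift} then guarantees that the colourings of $u{\tt a}$ and $u{\tt b}$ by these same shifts lie in $\LL(\vv)$; these colourings have the form $v\alpha$ with $\alpha \in \A$ and $v\beta$ with $\beta \in \B$, respectively, so $v$ admits two distinct right extensions. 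An analogous argument on the left shows that $v$ is bispecial in $\vv$.

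For the reverse direction, assume $v$ is bispecial in $\vv$. The main point is that any two distinct right extensions of $v$ in $\vv$ must come from different alphabets. To see this, observe that if two occurrences $i_1, i_2$ of $v$ in $\vv$ are both followed by $\A$-letters $\alpha_1$ and $\alpha_2$, then $\alpha_1 = \alpha_2$: the $\A$-subsequence read off at either occurrence is a factor of $\yy$ of length $|u|_{\tt a} \geq P$, and both such factors equal the $\A$-subsequence of $v$. Since $\yy$ has period $P$, any two occurrences of the same factor of $\yy$ of length at least $P$ start at positions congruent modulo $P$, so the next letter of $\yy$ after each is identical. The analogous argument using $\yy'$ and $|u|_{\tt b} \geq P'$ rules out the possibility that both right extensions of $v$ come from $\B$. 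Hence one right extension lies in $\A$ and the other in $\B$, and applying $\pi$ gives $u{\tt a}, u{\tt b} \in \LL(\uu)$. A symmetric argument for left extensions finishes the proof that $u$ is bispecial.

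The proof is largely bookkeeping; the one step that requires genuine thought is the periodicity argument in the reverse direction, and this is the main obstacle. The hypotheses $|u|_{\tt a} \geq P$ and $|u|_{\tt b} \geq P'$ are calibrated precisely so that identical factors of $\yy$ (respectively $\yy'$) of this length must start at equal indices modulo the period, which is exactly what is needed to make the uniqueness argument go through.
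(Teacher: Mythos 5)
Your proof is correct. Note that the paper itself states this Observation without proof (it is taken as a simple consequence of the colouring construction, with the surrounding formula attributed to the earlier paper of Dolce, Dvo\v r\'akov\'a and Pelantov\'a), so there is no in-paper argument to compare against; your write-up is a legitimate way to fill in the details. Item (2) and the direction ``$u$ bispecial $\Rightarrow$ $v$ bispecial'' are exactly as one would expect, the latter resting on Corollary~\ref{cor:cyclicshift} applied to $u\mathtt{a}$, $u\mathtt{b}$ (and, after shifting the colouring sequences back by one position, to $\mathtt{a}u$, $\mathtt{b}u$), and this direction indeed needs no length hypothesis. The converse is where the hypotheses $|u|_{\tt a}\geq P$, $|u|_{\tt b}\geq P'$ enter, and your synchronization argument is the right one; the only point worth making explicit is that the claim ``two occurrences of the same factor of $\yy$ of length at least $P$ start at positions congruent modulo $P$'' uses that $P=\Per\,\yy$ is the \emph{minimal} period (so the period block is primitive, and a primitive word differs from all its nontrivial rotations); for a non-minimal period the claim is false, but with the paper's definition of $\Per$ your step is sound, and the analogous statement for $\yy'$ with $P'$ handles the case of two extensions in $\B$, as well as the symmetric argument for left extensions.
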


 Denote $ A_N =\left( \begin{smallmatrix} p_{N-1}  & p_N\\  q_{N-1} & q_{N} \end{smallmatrix} \right), \ \ \delta_N = [a_{N+1}, a_{N+2}, \ldots ]$.

 \medskip 
Let $(N,m)$ be a pair associated in  Proposition~\ref{prop:returnWords} to a  bispecial factor of $\uu$. We assign to $(N,m)$ the sets:

$\mathcal{S}_1(N,m)=\bigl\{ \left( \begin{smallmatrix}  \ell \\ k \end{smallmatrix} \right) : A_N\left( \begin{smallmatrix} 1 & 0 \\  m & 1 \end{smallmatrix} \right)  \left( \begin{smallmatrix}  \ell \\ k \end{smallmatrix} \right) = \left( \begin{smallmatrix}  0\\ 0 \end{smallmatrix} \right) \mod \left( \begin{smallmatrix}  P \\ P' \end{smallmatrix} \right) \bigr\}\,;$

$\mathcal{S}_2(N,m)=\{ \left( \begin{smallmatrix}  \ell \\ k \end{smallmatrix} \right) : |\ell(\delta_N-m) -k| < \delta_N -m+1 \text{ and } k+\ell >0  \}\,;$  

$\mathcal{S}(N,m) = \mathcal{S}_1(N,m) \cap \mathcal{S}_2(N,m)\,.   $

\medskip

\begin{proposition}\label{vzorec} Let 
$\uu$ be a Sturmian sequence  with slope $\theta = [0,a_1,a_2,a_3,\ldots]$ and $\yy$ and $\yy'$ be two constant gap sequences. Put 
\begin{equation}\label{Eq:Phi}
\Phi_N : = \max\left\{\frac{1+m +\frac{Q_{N-1}}{Q_N}}{k + \ell m +\ell \frac{Q_{N-1}}{Q_N}}\ : \    \left( \begin{smallmatrix}  \ell \\ k \end{smallmatrix} \right)  \in \mathcal{S}(N,m) \  \text{and} \  0\leq m <  a_{N+1} \right\}\,. 
\end{equation}
Then   the asymptotic critical exponent of the  balanced sequence $\vv = \colour(\uu, \yy,\yy')$ equals $E^*(\vv) =1+ \limsup\limits_{N\to \infty} \Phi_N$.  

\end{proposition}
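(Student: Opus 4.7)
My plan starts from Theorem~\ref{Prop_FormulaForCR} applied to $\vv$: it reduces $E^*(\vv)$ to a $\limsup$ of ratios $|v|/|R|$, where $v$ runs through bispecial factors of $\vv$ and $R$ denotes a shortest return word to $v$. Because we only need the $\limsup$, I would discard finitely many short bispecials and, using Observation~\ref{obs:special}, restrict to those $v$ for which $\pi(v)=z$ is bispecial in $\uu$ with matching occurrences. By Proposition~\ref{prop:returnWords} every such $z$ is parametrised by a unique pair $(N,m)$ with $0\le m<a_{N+1}$ and has two return words $r$ (a prefix of $\uu$) and $s$ with explicit Parikh vectors, yielding $|z|=(1+m)Q_N+Q_{N-1}-2$.

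I would then describe the candidate return words $R$ to $v$ in $\vv$. The word $\pi(R)$ runs from one occurrence of $z$ to the next in $\uu$, so it is a concatenation of $r$'s and $s$'s. If it contains $k$ copies of $r$ and $\ell$ copies of $s$, a straightforward computation gives
\[
\vec V(\pi(R))=A_N\begin{pmatrix}1&0\\m&1\end{pmatrix}\binom{\ell}{k},\qquad |R|=(k+\ell m)Q_N+\ell Q_{N-1}.
\]
The order of the $r$'s and $s$'s inside $\pi(R)$ is a factor of $\dd_\uu(z)$, which by Proposition~\ref{prop:returnWords}(2) is Sturmian with slope $[0,a_{N+1}-m,a_{N+2},\dots]$, so $\delta'=\delta_N-m$. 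By Lemma~\ref{lem_kl}, applied to $\dd_\uu(z)$, a factor with prescribed letter counts $(\ell,k)$ exists if and only if $(\ell,k)\in\mathcal S_2(N,m)$.

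The crucial colouring constraint enters next. The letter of $\vv$ at position $i$ depends on $u_i$ together with $|u_0\cdots u_{i-1}|_{\tt a}\bmod P$ (when $u_i={\tt a}$) or $|u_0\cdots u_{i-1}|_{\tt b}\bmod P'$ (when $u_i={\tt b}$). Consequently, the coloured word $v$ recurs at offset $|R|$ if and only if $|\pi(R)|_{\tt a}\equiv 0\pmod P$ and $|\pi(R)|_{\tt b}\equiv 0\pmod{P'}$, which the Parikh formula above translates to $(\ell,k)\in\mathcal S_1(N,m)$. Hence the admissible $(\ell,k)$ for some return word to $v$ are exactly those in $\mathcal S(N,m)=\mathcal S_1(N,m)\cap\mathcal S_2(N,m)$. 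Picking the $(\ell,k)\in\mathcal S(N,m)$ that minimises $|R|$ produces the shortest return word, so
\[
\max_{v:\,\pi(v)=z}\frac{|v|}{|R|}=\max_{(\ell,k)\in\mathcal S(N,m)}\frac{(1+m)Q_N+Q_{N-1}-2}{(k+\ell m)Q_N+\ell Q_{N-1}}.
\]
Dividing numerator and denominator by $Q_N$, the $-2$ becomes asymptotically negligible inside the $\limsup$; maximising further over $m$ recovers $\Phi_N$, and applying $\limsup_{N\to\infty}$ in Theorem~\ref{Prop_FormulaForCR} yields the stated formula.

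The main obstacles I anticipate are (i) confirming that the $(\ell,k)\in\mathcal S(N,m)$ which minimises $|R|$ actually realises a return word to $v$ and not merely a word in which $v$ reappears internally (requiring a minimality argument inside the derived sequence); (ii) correctly identifying which of $r,s$ plays the role of the more-frequent letter in $\dd_\uu(z)$ so that Lemma~\ref{lem_kl} produces $\delta_N-m$ rather than its reciprocal in $\mathcal S_2$; and (iii) the bookkeeping establishing the matrix identity for $\vec V(\pi(R))$ that matches the precise form of the congruence in $\mathcal S_1$.
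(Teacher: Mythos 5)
Your proposal follows essentially the same route as the paper's proof: reduce via Theorem~\ref{Prop_FormulaForCR} and Observation~\ref{obs:special} to bispecial factors $v$ with $\pi(v)$ bispecial in $\uu$, parametrise by $(N,m)$ through Proposition~\ref{prop:returnWords}, derive the congruence condition $\mathcal S_1$ from the colouring periods and the condition $\mathcal S_2$ from Lemma~\ref{lem_kl} applied to the derived sequence, and conclude with the same length computation in which the $-2$ is asymptotically negligible. The three obstacles you flag are genuine but minor (and are treated at the same implicit level in the paper's own argument), so the plan is sound and matches the published proof.
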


\noindent

\begin{proof} 
Let $v$ be a bispecial factor of $\vv$ satisfying assumptions of Observation~\ref{obs:special} and $i<j$ be two consecutive occurrences of $v$ in $\vv$  and $w =v_iv_{i+1}\cdots v_{j-1}$.  By Item 1 of Observation \ref{obs:special} the factor $u= \pi(v)$ is bispecial in  $\uu$. Let $(N,m)$ be the pair associated by Proposition~\ref{prop:returnWords} with  $u$  and $r$ and $s$ be two return words to $u$ in $\uu$. By Item 2 of Observation \ref{obs:special} the projection $\pi(w)$  is concatenated from $\ell$ return words $s$  and $k$ return words $r$ for some $ \ell, k \in \N, k+\ell >0.$  Obviously, the vector $\left( \begin{smallmatrix}  \ell \\ k \end{smallmatrix} \right)$ is the Parikh vector of the derived sequence $d_{\uu}(u)$. By Proposition~\ref{prop:returnWords} the slope $\theta'$ of the derived sequence is  $[0, a_{N+1}-m, a_{N+2}, a_{N+3}, \ldots]$. The inverse of $\theta'$  is $\delta_N - m= [ a_{N+1}-m, a_{N+2}, a_{N+3}, \ldots]$. By Lemma \ref{lem_kl}  the pair $\ell, k$ satisfies the inequality    $|\ell(\delta_N-m) -k| < \delta_N -m+1 $.
Hence $\left( \begin{smallmatrix}  \ell \\ k \end{smallmatrix} \right)$ belongs to $\mathcal{S}_2(N,m)$.

Since $i$ and $j$ are occurrences of $v$ in $\vv$, the number of letters ${\tt a}$, resp. ${\tt b}$ in $\pi(w)$ is a multiple of $P$, resp. $P'$. Using the corresponding  Parikh vectors we have $\vec{V}(\pi(w)) =\ell   
 \vec{V}(s) + k\vec{V}(r) = \left( \begin{smallmatrix} 0 \\ 0  \end{smallmatrix} \right) \bmod \left( \begin{smallmatrix} P \\ P' \end{smallmatrix} \right)$. By Proposition~\ref{prop:returnWords}, 
 $\ell   
 \vec{V}(s) + k\vec{V}(r) =   A_N\left( \begin{smallmatrix} 1 & 0 \\  m & 1 \end{smallmatrix} \right)  \left( \begin{smallmatrix}  \ell \\ k \end{smallmatrix} \right)$, hence $\left( \begin{smallmatrix}  \ell \\ k \end{smallmatrix} \right)$ belongs to  $\mathcal{S}_1(N,m)$ and thus to  $\mathcal{S}(N,m)$. 
 
Let us evaluate the  ratio  $\frac{|v|}{|w|}$. We abbreviate  $ x_N = \frac{Q_{N-1}}{Q_N}$.  By Proposition~\ref{prop:returnWords} 
$$
\frac{|v|}{|w|} = \frac{|r| +  |s| -2}{k|r|+ \ell |s|}=
\frac{(1+m)Q_N + Q_{N-1} -2}{(k+\ell m)Q_N + \ell Q_{N-1}} 
=\frac{1+m + x_N}{k +\ell m + \ell x_{N}} - \frac{2}{|w|}. 
$$
The previous equality is valid for each factor  $w$ occurring between two occurrences of $v$ in $\vv$. If $w$ is a~shortest return word to $v$, then 
$$ \frac{|v|}{|w|} = \max\left\{ \frac{1+m + x_N}{k +\ell m + \ell x_{N}}: \left( \begin{smallmatrix}  \ell \\ k \end{smallmatrix} \right)  \in \mathcal{S}(N,m) \right\}-\frac{2}{|w|}\,.$$
Hence  $\Phi_N$  expresses  (up to the subtracted fraction $\frac{2}{|w|}$) the maximal value of the ratio $|v|/|w|$  among all possible  pairs $(N,m)$ with a fixed $N$. Since the length $|w|$ tends to infinity with growing $N$, Theorem \ref{Prop_FormulaForCR} concludes the proof.
\end{proof}

If the slope of a Sturmian sequence is quadratic irrational,  then the asymptotic critical exponent can be computed explicitly. 
The algorithm is explained in details in~\cite{DDP21}. We implemented it and throughout this paper we use our computer program which for a given eventually periodic continued fraction $\theta$ and a pair $P=\Per\, \yy$, $P'=\Per\, \yy'$ finds the exact value of $E^*(\vv)$.

\section{Colouring with linked parameters}
In this section we study  how to restrict without loss of generality  the periods $P, P'$ of constant gap sequences  when searching for balanced sequences with the minimal asymptotic critical exponent. 
 Let us recall  that in the whole paper we work without loss of generality with Sturmian sequences $\uu$ having the slope $\theta = \frac{\rho_{\tt a}({\uu})}{\rho_{\tt b}({\uu})}  \in (0,1)$, i.e.,  the letter $\tt b$ is the most frequent letter in ${\uu}$.

\begin{proposition}\label{symmetry} Let  $\yy$ and $\yy'$ be two constant gap sequences and  $\uu \in \{\tt a,  \tt b\}^\N $  be a  Sturmian sequence with  $\tt b$ being  the most frequent letter in ${\uu}$.   Then there exists a  Sturmian sequence $\widetilde{\uu} \in \{\tt a,  \tt b\}^\N $  with  $\tt b$ being  the most frequent letter in $\widetilde{\uu}$ such that  $$E^*\bigl(\colour(\widetilde{\uu},\yy',{\yy})\bigr) =  E^*\bigl(\colour({\uu},\yy,{\yy'})\bigr) \,. $$
\end{proposition}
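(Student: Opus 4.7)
The plan is to exploit the letter-swap symmetry of the colouring operation and then to realize the resulting Sturmian sequence, which has the ``wrong'' most frequent letter, as a standard one with $\tt b$ most frequent.

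Denote by $\bar{\uu}$ the sequence obtained from $\uu$ by interchanging the letters $\tt a$ and $\tt b$. Directly from Definition~\ref{def:colouring}, as sequences over $\A\cup\B$ one has $\colour(\bar{\uu},\yy',\yy)=\colour(\uu,\yy,\yy')$ (both sides colour the $\tt a$-positions of $\uu$, i.e.\ the $\tt b$-positions of $\bar{\uu}$, by $\yy$, and the $\tt b$-positions of $\uu$ by $\yy'$). Consequently
\[
E^*\!\bigl(\colour(\bar{\uu},\yy',\yy)\bigr) = E^*\!\bigl(\colour(\uu,\yy,\yy')\bigr).
\]
Since $\bar{\uu}$ has $\tt a$ as its most frequent letter, the task reduces to exhibiting a standard Sturmian $\widetilde{\uu}$ with $\tt b$ most frequent such that $E^*(\colour(\widetilde{\uu},\yy',\yy))=E^*(\colour(\bar{\uu},\yy',\yy))$.

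For the construction, I use that the letter swap $\tau\colon\tt a\leftrightarrow\tt b$ conjugates $D$ into $G$ and vice versa. Thus if $\uu$ has slope $\theta=[0,a_1,a_2,a_3,\ldots]$ and directive sequence $D^{a_1}G^{a_2}D^{a_3}G^{a_4}\cdots$, then $\bar{\uu}$ has directive sequence $G^{a_1}D^{a_2}G^{a_3}D^{a_4}\cdots$. I then take $\widetilde{\uu}$ to be the standard Sturmian whose directive sequence is $D\cdot G^{a_1}D^{a_2}G^{a_3}\cdots$, obtained by prepending a single $D$ to $\bar{\uu}$'s directive sequence. Its slope is $\tilde\theta=[0,1,a_1,a_2,a_3,\ldots]=1/(1+\theta)\in(1/2,1)$, so $\tt b$ is indeed the most frequent letter in $\widetilde{\uu}$.

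The final step is to verify $E^*(\colour(\widetilde{\uu},\yy',\yy))=E^*(\colour(\uu,\yy,\yy'))$ using Proposition~\ref{vzorec}. A direct induction on $N$ yields the convergent identities
\[
\tilde p_{N+1}=q_N, \qquad \tilde q_{N+1}=p_N+q_N=Q_N \qquad (N\geq 0),
\]
where $(p_N,q_N)$ and $(\tilde p_N,\tilde q_N)$ are the convergents of $\theta$ and $\tilde\theta$. Plugging these into formula~\eqref{Eq:Phi} applied to $\colour(\widetilde{\uu},\yy',\yy)$ with moduli $(P',P)$ and comparing with the formula for $\colour(\uu,\yy,\yy')$ with moduli $(P,P')$, the sets $\mathcal{S}_2$ coincide because they depend only on the tail $[a_{N+1},a_{N+2},\ldots]$ of the slope, which is preserved under the CF-prepending; and the rational expression inside the max depends only on $m$, $\ell$, $k$ and the ratio $\tilde Q_N/\tilde Q_{N+1}=Q_{N-1}/Q_N$.

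The main obstacle is the matching of the sets $\mathcal{S}_1$ after the modular swap $(P,P')\leftrightarrow(P',P)$. Using the convergent identities, the $\mod P'$ conditions for $\widetilde{\uu}$ coincide verbatim with those for $\uu$; the $\mod P$ condition for $\widetilde{\uu}$, however, equals the sum of the $\mod P$ condition and the $\mod P'$ condition for $\uu$, by the identity $Q_N=p_N+q_N$. Establishing a bijection between the two resulting lattices of admissible pairs $(\ell,k)$ which preserves (at least in the $\limsup$) the value of the ratio in $\Phi_N$, so that the extremal pairs dominating the $\limsup$ correspond to each other, is the delicate technical part. Once this is carried out the two $\limsup$s agree and the proposition follows.
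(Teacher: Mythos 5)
Your overall strategy is the same as the paper's (realize the swapped colouring by prepending a block to the continued fraction and compare the quantities $\Phi_N$ of Proposition~\ref{vzorec}), but the specific construction you choose breaks down exactly at the step you defer. You prepend a single $D$, i.e.\ you take $\widetilde\theta=[0,1,a_1,a_2,\ldots]$, which gives $\widetilde q_{N+1}=p_N+q_N$. For the swapped colouring $\colour(\widetilde\uu,\yy',\yy)$ the second row of $\widetilde A_{N+1}$ must be reduced modulo $P=\Per\,\yy$, so the new congruence reads $(Q_{N-1}+mQ_N)\ell+Q_Nk\equiv 0 \pmod P$, while the original one reads $(p_{N-1}+mp_N)\ell+p_Nk\equiv 0\pmod P$; these differ by $(q_{N-1}+mq_N)\ell+q_Nk \pmod P$, and the other available congruence only controls this quantity modulo $P'$, not modulo $P$. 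Hence $\widetilde{\mathcal S}_1$ and $\mathcal S_1$ genuinely differ (e.g.\ already for $P=3$, $P'=4$), the admissible pairs $(\ell,k)$ do not correspond to each other, and there is no reason the two $\limsup$s of $\Phi_N$ agree for your $\widetilde\uu$. This ``bijection of lattices'' is not a routine technicality: the whole machinery of the paper (the classes $[A_N]_\equiv$ and the graphs of admissible tails) shows that $E^*$ depends on the congruence classes of the convergents, which your change of preperiod alters. A secondary slip: with your choice the identity $\widetilde Q_N/\widetilde Q_{N+1}=Q_{N-1}/Q_N$ you invoke is false ($\widetilde Q_{N+1}=q_N+Q_N$); one only gets an asymptotic estimate.

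The paper's proof avoids the problem by a different choice of the prepended block: it sets $b_1=P=\Per\,\yy$ (i.e.\ $\widetilde\theta=[0,P,a_1,a_2,\ldots]$), so that $\widetilde p_{N+1}=q_N$ and $\widetilde q_{N+1}=p_N+q_NP\equiv p_N\pmod P$. Then the two rows of $\widetilde A_{N+1}$, reduced modulo $(P',P)$, coincide with the rows of $A_N$ reduced modulo $(P,P')$ in swapped order, giving the exact identity $\widetilde{\mathcal S}(N+1,m)=\mathcal S(N,m)$; the only remaining point is the estimate $\bigl|\widetilde Q_N/\widetilde Q_{N+1}-Q_{N-1}/Q_N\bigr|\le P/Q_N^2$, which yields $\widetilde\Phi_{N+1}-\Phi_N\to 0$ and hence equality of the asymptotic critical exponents. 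If you replace your single $D$ by $D^{P}$, your argument (including the nice observation $\colour(\bar\uu,\yy',\yy)=\colour(\uu,\yy,\yy')$ explaining why prepending is the right move) goes through along these lines; as written, the proof is incomplete.
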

\begin{proof} 
Let $\theta=[0,a_1,a_2,a_3,\ldots]$ be the slope of $\uu$ and $\frac{p_N}{q_N}$ be  the $N^{th}$ convergent to $\theta$.  Define the slope $\widetilde \theta=[0,b_1,b_2,\ldots]$ of the Sturmian sequence  $\widetilde \uu$ by 
$b_1 =  P =\Per\,{\yy}$  and $b_{N+1} = a_N$  for each $N \in \N$. 
For the slope $\theta$,   we use the notation  $\mathcal{S}(N,m)$ and $\Phi_N$ as introduced in Section~\ref{sec:formulaE*} and $Q_N = p_N+q_N$.  Analogously, for the slope $\widetilde{\theta}$, we 
use the notation $\widetilde{\mathcal{S}}(N,m)$, $\widetilde{\Phi}_N$ and $\widetilde{Q}_N$. 

Using the recurrent relation~\eqref{eq:convergents}, we get $\widetilde{p}_{N+1}=q_N, \quad \widetilde{q}_{N+1}=p_N+q_N P$ and $\widetilde{Q}_{N+1} = Q_N+ q_N P$ for each $N\in \mathbb N$. 
It follows then immediately  for each $N \in \N$  and $m< a_{N+1}  = b_{N+2}$ that 
\begin{enumerate}
\item  $\widetilde{\mathcal{S}}(N+1,m) =  {\mathcal{S}}(N,m)$;
\item $|\frac{\widetilde{Q}_N}{\widetilde{Q}_{N+1}} -\tfrac{Q_{N-1}}{Q_N}| \leq \tfrac{P}{Q_{N}^2} $.
\end{enumerate}

Items 1 and 2  imply that $\lim\limits_{N \to \infty} \bigl( \widetilde{\Phi}_{N+1} - \Phi_N\bigr) = 0$. Hence  by Proposition~\ref{vzorec}  $$E^*\bigl(\colour(\widetilde{\uu},\yy',{\yy})\bigr) = 1+\limsup\limits_{N\to \infty}  \widetilde{\Phi}_{N} =  1+\limsup\limits_{N\to \infty}{\Phi}_N = E^*\bigl(\colour({\uu},\yy,{\yy'})\bigr).$$ 
\end{proof}

In the statement of  Proposition~\ref{symmetry},  the asymptotic critical exponent  cannot be  replaced by the  critical exponent. The following example demonstrates this fact. 

\begin{example}  

Baranwal and Shallit show in \cite{BaSh19} that the minimal critical exponent of $5$-ary balanced sequences equals $\frac{3}{2}$ and it is reached by the sequence $\vv=\colour(\uu,\yy,\yy')$, where $\uu \in \{\tt a,\tt b\}^\N $ is a Sturmian sequence  with slope  $[0,1,\overline{2}]$ and $\tt a$'s are coloured by   $\yy=(\tt 01)^{\omega}$ and $\tt b$'s are coloured by  $\yy'=(\tt 2324)^{\omega}$.

\medskip

Now we  colour $\tt a$'s by $\yy'=(\tt 2324)^{\omega}$ and $\tt b$'s  by  $\yy=(\tt 01)^{\omega}$.    We will explain that for each sequence $\widetilde{\vv}=\colour(\widetilde{\uu},\yy',{\yy})$, where $\widetilde{ \uu}$ is a Sturmian sequence with slope $[0, a_1,a_2,a_3,\ldots ]$, we have $E(\widetilde{\vv})>\frac{3}{2}$.   
\begin{itemize}
    \item If  $a_1 \geq 2$, 
    then by Proposition~\ref{Prop_Standard} and the definition of slope, $D^{a_1}({\tt aa})={\tt b}^{a_1}{\tt a}{\tt b}^{a_1}{\tt a}$ is a factor of $\widetilde{\uu}$. If $a_1 = 1$ and $a_2=1$, then $DG({\tt bba})=\tt babbabba$ is a factor of $\widetilde{\uu}$. In both cases, ${\tt bbabb} \in \mathcal{L}(\widetilde{\uu})$. Hence  by  Corollary~\ref{cor:cyclicshift} the factor  ${\tt 01201} \in  \mathcal{L}(\widetilde{\vv}) $ and thus $E(\widetilde{\vv}) \geq \frac53 > \frac32$.

    \item If $a_1 =1$ and $a_2\geq 2$, then  
    $DG^{a_2}({\tt ab})={\tt ba}({\tt ba})^{a_2}{\tt b}$ is a factor of $\widetilde{\uu}$, in particular,  ${\tt bababab} \in \mathcal{L}(\widetilde{\uu})$. 
    Hence ${\tt 0213021} \in  \mathcal{L}(\widetilde{\vv}) $ and thus $E(\widetilde{\vv}) \geq \frac74 > \frac32$. 
\end{itemize}

\end{example}

\begin{lemma}\label{delitele}
Let $\vv=\colour(\uu,\yy,{\yy}')$ and $\widehat{\vv}=\colour(\uu,\hat\yy,{\hat\yy}')$. If $\Per\,\hat\yy$  is divisible by $\Per\,\yy$  and $\Per\,{\hat\yy}'$ is divisible by $\Per\,{\yy}'$, then $E^*(\vv)\geq E^*(\widehat{\vv})$.

\end{lemma}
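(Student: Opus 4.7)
The plan is to read off the statement directly from the explicit formula for $E^*(\vv)$ established in Proposition~\ref{vzorec}. Writing $P=\Per\,\yy$, $P'=\Per\,\yy'$, $\hat P=\Per\,\hat\yy$, $\hat P'=\Per\,\hat\yy'$, the only ingredient in the formula that depends on the constant gap sequences is the set $\mathcal{S}_1(N,m)$; the set $\mathcal{S}_2(N,m)$ and the scalar $x_N=Q_{N-1}/Q_N$ depend only on the underlying Sturmian sequence $\uu$, which is the same for $\vv$ and $\widehat{\vv}$. So I will isolate the comparison at the level of the sets $\mathcal{S}(N,m)$ and $\widehat{\mathcal{S}}(N,m)$ arising from $(P,P')$ and $(\hat P,\hat P')$ respectively.

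First I would observe that if $\binom{\ell}{k}$ satisfies
\[
A_N\left( \begin{smallmatrix} 1 & 0 \\ m & 1 \end{smallmatrix} \right)\binom{\ell}{k} \equiv \binom{0}{0} \pmod{\binom{\hat P}{\hat P'}},
\]
then, since $P\mid \hat P$ and $P'\mid \hat P'$, it a fortiori satisfies the analogous congruence modulo $\binom{P}{P'}$. Hence $\widehat{\mathcal{S}}_1(N,m)\subseteq \mathcal{S}_1(N,m)$, and intersecting both sides with $\mathcal{S}_2(N,m)$ yields $\widehat{\mathcal{S}}(N,m)\subseteq \mathcal{S}(N,m)$ for every admissible pair $(N,m)$.

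Next I will apply this inclusion inside the definition \eqref{Eq:Phi} of $\Phi_N$. The function $(\ell,k,m)\mapsto\frac{1+m+x_N}{k+\ell m+\ell x_N}$ is positive on the relevant domain (note $k+\ell>0$ forces the denominator to be strictly positive because $x_N>0$ and $m\geq 0$), so taking the maximum over a larger set can only increase it. Therefore, writing $\widehat{\Phi}_N$ for the quantity \eqref{Eq:Phi} associated with $(\hat P,\hat P')$, we obtain $\widehat{\Phi}_N\leq \Phi_N$ for every $N\in\N$.

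Finally I pass to the upper limit: $\limsup_{N\to\infty}\widehat{\Phi}_N\leq \limsup_{N\to\infty}\Phi_N$, and Proposition~\ref{vzorec} translates this directly into $E^*(\widehat{\vv})\leq E^*(\vv)$, which is the claim. There is no serious obstacle here; the only point deserving a line of care is the easy verification that $\widehat{\mathcal{S}}_1(N,m)\subseteq \mathcal{S}_1(N,m)$ from the divisibility of the moduli, and that $\widehat{\mathcal{S}}(N,m)\neq\emptyset$ is not required since one simply compares the maxima coordinate-wise over $m$, with the convention that an empty set contributes nothing.
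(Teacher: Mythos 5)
Your proposal is correct and follows essentially the same route as the paper: both arguments reduce the claim to the inclusion $\widehat{\mathcal{S}}(N,m)\subseteq \mathcal{S}(N,m)$ (coming from the divisibility of the moduli in the congruence defining $\mathcal{S}_1$) and then apply Proposition~\ref{vzorec}. Your write-up merely spells out the monotonicity of the maximum in \eqref{Eq:Phi} and the passage to the upper limit, which the paper leaves implicit.
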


\begin{proof}
Let us denote by ${\mathcal S}(N,m)$ the set corresponding to $\vv$ defined in Section~\ref{sec:formulaE*} and similarly $\hat{{\mathcal S}}(N,m)$ for $\widehat{\vv}$. Since we colour the same Sturmian sequence $\uu$, we have $\hat{{\mathcal S}}(N,m) \subset {\mathcal S}(N,m)$. Applying Proposition~\ref{vzorec} we obtain $E^*(\vv)\geq E^*(\widehat{\vv})$.
\end{proof}

\section{Equivalence on unimodular matrices}
By Proposition \ref{prop:bounds}, the asymptotic critical exponent depends only on the periods of constant gap sequences  $\yy$ and $\yy'$, not on their structure.   
In the sequel we  use for a fixed pair    $P=\Per\, \yy\,$ and $ P'=\Per\, \yy'\,$  the notation $H,L,Y,Y'$  introduced by the following relations:  
\begin{equation}\label{notataionH}
H = \gcd(P,\ P'\, ), \ \  P = HY\,,  \  \  P' = HY' \  \  \text{and } \ \ L = \lcm(P, P'\, ).
\end{equation}
Obviously, $Y$ and $Y'$ are coprime. We always consider $L>1$ since for $\Per\, \yy=\Per\, \yy'=1$, the sequences $\vv=\colour(\uu, \yy, \yy')$ and $\uu$ are the same and the minimal asymptotic critical exponent for Sturmian sequences is known.

In this section we will study the form of the set 
$$\mathcal{S}_1(N,m)=\bigl\{ \left( \begin{smallmatrix}  \ell \\ k \end{smallmatrix} \right) : A_N\left( \begin{smallmatrix} 1 & 0 \\  m & 1 \end{smallmatrix} \right)  \left( \begin{smallmatrix}  \ell \\ k \end{smallmatrix} \right) = \left( \begin{smallmatrix}  0\\ 0 \end{smallmatrix} \right) \mod \left( \begin{smallmatrix}  P \\ P' \end{smallmatrix} \right) \bigr\}\,,$$
which plays an essential role in the definition of $\Phi_N$ (see Proposition~\ref{vzorec}), and consequently in the computation of the asymptotic critical exponent of balanced sequences.  
Note  that the determinant of $ A_N\left( \begin{smallmatrix} 1 & 0 \\  m & 1 \end{smallmatrix} \right)$ equals $\pm1$, i.e., the matrix is unimodular. A solution  $\left( \begin{smallmatrix}  \ell \\ k \end{smallmatrix} \right) $ depends only on  entries of the matrix counted $\bmod \,P$ in the first row and $\bmod \, P'$ in the second row.  Hence it is possible to group matrices into classes of the same behaviour with respect to the form of $\mathcal{S}_1(N,m)$.  The following lemma prepares 
such grouping.

\begin{lemma}\label{le:LambdaKappa} Let $A \in \mathbb Z^{2\times 2}$ be unimodular and   $\ell, k \in \mathbb{Z}$. 
Then 
 $A\left( \begin{smallmatrix}  \ell \\ k \end{smallmatrix} \right)=
 \left( \begin{smallmatrix}0 \\ 0 \end{smallmatrix} \right) \ {\rm mod} \ 
 \left( \begin{smallmatrix}  P \\ P' \end{smallmatrix} \right)$ if and only if there exist $\lambda, \kappa \in \mathbb{Z}$ such that $$
\left( \begin{smallmatrix}  \ell \\ k \end{smallmatrix} \right)= H\left( \begin{smallmatrix}  \lambda  \\ \kappa \end{smallmatrix} \right) \ \ \ \text{and} \  \ \ 
 A\left( \begin{smallmatrix}  \lambda  \\ \kappa  \end{smallmatrix} \right)=\left( \begin{smallmatrix} 0 \\ 0  \end{smallmatrix} \right)  \ {\rm mod}  \left( \begin{smallmatrix}  Y \\  Y' \end{smallmatrix} \right) . 
 $$ 
\end{lemma}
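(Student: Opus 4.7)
The plan is to prove the two directions separately, with the nontrivial content concentrated in the forward implication, where unimodularity of $A$ is the key.

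First I would dispose of the easy direction: assume $\left( \begin{smallmatrix} \ell \\ k\end{smallmatrix}\right) = H\left( \begin{smallmatrix} \lambda \\ \kappa \end{smallmatrix}\right)$ and $A\left( \begin{smallmatrix} \lambda \\ \kappa \end{smallmatrix}\right) \equiv \left( \begin{smallmatrix} 0 \\ 0 \end{smallmatrix}\right) \pmod{\left( \begin{smallmatrix} Y \\ Y' \end{smallmatrix}\right)}$. Then $A\left( \begin{smallmatrix} \ell \\ k \end{smallmatrix}\right) = H\, A\left( \begin{smallmatrix} \lambda \\ \kappa \end{smallmatrix}\right)$, so the first component is divisible by $HY = P$ and the second by $HY' = P'$, giving the congruence. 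This step is a one-liner using \eqref{notataionH}.

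For the forward direction, assume $A\left( \begin{smallmatrix} \ell \\ k \end{smallmatrix}\right) \equiv \left( \begin{smallmatrix} 0 \\ 0 \end{smallmatrix}\right) \pmod{\left( \begin{smallmatrix} P \\ P' \end{smallmatrix}\right)}$. Since $H$ divides both $P$ and $P'$, both coordinates of $A\left( \begin{smallmatrix} \ell \\ k \end{smallmatrix}\right)$ are divisible by $H$, so I can write $A\left( \begin{smallmatrix} \ell \\ k \end{smallmatrix}\right) = H \mathbf{c}$ for some $\mathbf{c} \in \mathbb Z^{2}$. The key point is now that $\det A = \pm 1$, hence $A^{-1}$ has integer entries, and therefore
\[
\left( \begin{smallmatrix} \ell \\ k \end{smallmatrix}\right) \;=\; A^{-1}A\left( \begin{smallmatrix} \ell \\ k \end{smallmatrix}\right) \;=\; H\, A^{-1}\mathbf{c} \;\in\; H\mathbb Z^{2}.
\]
So I may set $\lambda = \ell/H$ and $\kappa = k/H$, both integers.

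Finally I would translate the congruence back. By construction $H A\left( \begin{smallmatrix} \lambda \\ \kappa \end{smallmatrix}\right) = A\left( \begin{smallmatrix} \ell \\ k \end{smallmatrix}\right)$, and the assumption says this equals $\left( \begin{smallmatrix} 0 \\ 0 \end{smallmatrix}\right) \bmod \left( \begin{smallmatrix} HY \\ HY' \end{smallmatrix}\right)$. Cancelling the common factor $H$ from both the left-hand side and each modulus yields $A\left( \begin{smallmatrix} \lambda \\ \kappa \end{smallmatrix}\right) \equiv \left( \begin{smallmatrix} 0 \\ 0 \end{smallmatrix}\right) \pmod{\left( \begin{smallmatrix} Y \\ Y' \end{smallmatrix}\right)}$, as required. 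The only step one has to think about at all is the use of $A^{-1} \in \mathbb Z^{2\times 2}$; everything else is just book-keeping with \eqref{notataionH}, and coprimality of $Y$ and $Y'$ is actually not needed for the lemma itself (it only becomes relevant when this factorisation is later exploited via the Chinese remainder theorem).
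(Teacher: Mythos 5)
Your proof is correct and follows essentially the same route as the paper: the forward direction hinges on the integrality of $A^{-1}$ (unimodularity) to conclude that $H$ divides $\ell$ and $k$, after which one divides the congruence and the moduli by $H$; the converse is the same one-line multiplication the paper dismisses as "analogous." The only cosmetic difference is that the paper writes $A\left(\begin{smallmatrix}\ell\\ k\end{smallmatrix}\right)=H\left(\begin{smallmatrix}aY\\ bY'\end{smallmatrix}\right)$ explicitly from the start, whereas you first extract the factor $H$ and then cancel it against the moduli — the same argument in a slightly different order.
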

\begin{proof} $(\Rightarrow)$
We have   $A\left( \begin{smallmatrix}  \ell \\ k \end{smallmatrix} \right)=\left( \begin{smallmatrix} a\, P\\ b\, P'\end{smallmatrix} \right)=H\left( \begin{smallmatrix} a  Y\hspace{0.1cm} \\ b  Y' \end{smallmatrix} \right)$ for some $a,b \in \mathbb Z$. Since $A$ is unimodular, it is invertible in $\mathbb Z$, hence $$\left( \begin{smallmatrix}  \lambda \\ \kappa \end{smallmatrix} \right) :=\tfrac{1}{H}\left( \begin{smallmatrix}  \ell \\ k \end{smallmatrix} \right)=A^{-1} \left( \begin{smallmatrix} a  Y\hspace{0.1cm} \\ b  Y' \end{smallmatrix} \right) \in \mathbb{Z}^2.$$
Moreover, $A\left( \begin{smallmatrix}  \ell \\ k \end{smallmatrix} \right)=H\left( \begin{smallmatrix} a  Y\hspace{0.1cm} \\ b  Y' \end{smallmatrix} \right)$ implies 
$A\left( \begin{smallmatrix}  \lambda \\ \kappa \end{smallmatrix} \right)=\left( \begin{smallmatrix} a Y\hspace{0.1cm} \\ b  Y' \end{smallmatrix} \right) =\left( \begin{smallmatrix} 0 \\ 0  \end{smallmatrix} \right)  \ {\rm mod}  \left( \begin{smallmatrix}  Y \\  Y' \end{smallmatrix} \right) . $

$(\Leftarrow)$ The reasoning is analogous. 
\end{proof}

\begin{remark}\label{name} To solve the equation    $A\left( \begin{smallmatrix}  \lambda \\ \kappa \end{smallmatrix} \right)=
 \left( \begin{smallmatrix}0 \\ 0 \end{smallmatrix} \right) \ {\rm mod} \ 
 \left( \begin{smallmatrix}  Y \\ Y' \end{smallmatrix} \right)$, where $A$ is a unimodular matrix,  we just  need to know  the remainders of  division  by $Y$, resp. $Y'$, of the first row, resp. of the second row of $A$. Therefore, we consider instead of  a  matrix $ A=\left( \begin{smallmatrix}  a_{11}&a_{12} \\ a_{21}&a_{22} \end{smallmatrix} \right)$ the so-called   
$$ \text{$(Y,Y')$-name of $A$ defined as   } \quad M_A= \left( \begin{smallmatrix}  a_{11}\bmod Y\ & \ a_{12} \bmod Y \\   a_{21}\bmod Y'\ & \ a_{22} \bmod Y' \end{smallmatrix} \right). 
$$ 
Obviously, $A\left( \begin{smallmatrix}  \lambda  \\ \kappa  \end{smallmatrix} \right)=\left( \begin{smallmatrix} 0 \\ 0  \end{smallmatrix} \right)  \ {\rm mod}  \left( \begin{smallmatrix}  Y \\  Y' \end{smallmatrix} \right)$ if and only if $M_A\left( \begin{smallmatrix}  \lambda  \\ \kappa  \end{smallmatrix} \right)=\left( \begin{smallmatrix} 0 \\ 0  \end{smallmatrix} \right)  \ {\rm mod}  \left( \begin{smallmatrix}  Y \\  Y' \end{smallmatrix} \right)$.   Hence, matrices with the same  $(Y,Y')$-name have the same  solutions $\lambda, \kappa$.
 By Lemma \ref{le:LambdaKappa}, the values $H$, $m$ and the  $(Y,Y')$-name  of $A_N$  capture all pieces of information we need to decide whether $ \left( \begin{smallmatrix}  \ell \\ k \end{smallmatrix} \right)$ belongs to the set $\mathcal{S}_1(N,m)$.  
\end{remark}

  Nevertheless,  matrices with distinct $(Y,Y')$-names can have the same  solutions as well. The following example illustrates it. 

\begin{example} Let $Y=3$ and $Y'=4$. Consider the unimodular matrices $ A=\left( \begin{smallmatrix}  5& 31 \\ {21}&130 \end{smallmatrix} \right)$ and  $ B=\left( \begin{smallmatrix}  1& 11 \\ 3&34 \end{smallmatrix} \right)$. Their $(3,4)$-names are $M_A =\left( \begin{smallmatrix}  2& 1 \\ {1}&2 \end{smallmatrix} \right)$  and  $M_B =\left( \begin{smallmatrix}  1& 2 \\ {3}&2 \end{smallmatrix} \right)$.  By the previous remark,  a pair $\lambda,\kappa$  solves 
$A\left( \begin{smallmatrix}  \lambda  \\ \kappa  \end{smallmatrix} \right)=\left( \begin{smallmatrix} 0 \\ 0  \end{smallmatrix} \right)  \ {\rm mod}  \left( \begin{smallmatrix}  3 \\  4 \end{smallmatrix} \right)$  if and only if $$ \begin{smallmatrix}  2\lambda+\kappa= 0 \mod 3\\ \lambda +2\kappa = 0 \mod 4\end{smallmatrix} \quad \Longleftrightarrow  \quad  \begin{smallmatrix}  2(2\lambda+\kappa)= 0 \mod 3\\ 3(\lambda +2\kappa) = 0 \mod 4\end{smallmatrix} \quad \Longleftrightarrow \quad \begin{smallmatrix}  \lambda+2\kappa= 0 \mod 3\\ 3\lambda +2\kappa = 0 \mod 4\end{smallmatrix}\,, $$
which is equivalent to  $B\left( \begin{smallmatrix}  \lambda  \\ \kappa  \end{smallmatrix} \right)=\left( \begin{smallmatrix} 0 \\ 0  \end{smallmatrix} \right)  \ {\rm mod}  \left( \begin{smallmatrix}  3 \\  4 \end{smallmatrix} \right)$.   
\end{example}

To group unimodular matrices into classes with the same pairs $\lambda, \kappa$ of  solutions, we introduce an equivalence. 

\begin{definition}\label{def:equivalence} 
Let $A$ and $ B$ be unimodular matrices in $\mathbb Z^{2\times 2}$.  
We say that $A$ is {\em equivalent} to $B$, and write $A \equiv B$, if there exist $c \in \mathbb{Z}$ coprime with $Y$ and $c' \in \mathbb{Z}$ coprime with $Y'$ such that 
\begin{equation}\label{stejne}
 \left( \begin{smallmatrix} c & 0 \\  0 & c' \end{smallmatrix} \right) A = B \  \ {\rm mod}  \left( \begin{smallmatrix}  Y \\  Y' \end{smallmatrix} \right)\,.
\end{equation}
 The equivalence  class containing a matrix $A$ will be denoted $[A]_\equiv$.
\end{definition}

\begin{remark}
The relation $\equiv$ is an equivalence. Evidently $A \equiv A$. For a~fixed $z \in \mathbb Z$, the set of elements coprime with $z$ is closed under inverse modulo $z$ and multiplication modulo $z$, which implies that $\equiv$ is symmetric and transitive. 
Obviously, if  $A$ and $B$ satisfy \eqref{stejne}, then the  $(Y,Y')$-names  $M_A$ and $M_B$ satisfy
 $\left( \begin{smallmatrix} c & 0 \\  0 & c' \end{smallmatrix} \right) M_A = M_B \  \ {\rm mod}  \left( \begin{smallmatrix}  Y \\  Y' \end{smallmatrix} \right)$.
\end{remark}

\begin{lemma}\label{lem:equivalent_solutions1}
Let  $k, \ell \in \mathbb N$ and let $A$ and $B$ be  unimodular matrices in $\mathbb{Z}^{2\times 2}$ such that   $A \equiv  B$. 
Then
\begin{enumerate}
\item $A\left( \begin{smallmatrix}  \ell \\ k \end{smallmatrix} \right)=\left( \begin{smallmatrix} 0 \\ 0 \end{smallmatrix} \right) \ {\rm mod}\  \left( \begin{smallmatrix}  P \\  P' \end{smallmatrix} \right) 
\text{\ \  if and only if \ \  }B\left( \begin{smallmatrix}  \ell \\ k \end{smallmatrix} \right)=\left( \begin{smallmatrix} 0 \\ 0 \end{smallmatrix} \right) \ {\rm mod}\  \left( \begin{smallmatrix}  P \\  P' \end{smallmatrix} \right)  .$
    \item $AC \equiv  BC$  \ \ for any unimodular matrix $C \in \mathbb{Z}^{2\times 2}$.
   \end{enumerate}

\end{lemma}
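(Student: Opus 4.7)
The plan is to handle the two items separately. Both will follow quickly from the definition of $\equiv$ combined with Lemma~\ref{le:LambdaKappa}, with the only subtlety being a careful reading of what ``mod $\left( \begin{smallmatrix} Y \\ Y' \end{smallmatrix} \right)$'' means for matrix equations, namely a row-wise reduction (first row mod $Y$, second row mod $Y'$).

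For Item 1, I would first reduce the statement from $\left( \begin{smallmatrix} P \\ P' \end{smallmatrix} \right)$ to $\left( \begin{smallmatrix} Y \\ Y' \end{smallmatrix} \right)$ via Lemma~\ref{le:LambdaKappa}: assuming $A\left( \begin{smallmatrix} \ell \\ k \end{smallmatrix} \right) = \left( \begin{smallmatrix} 0 \\ 0 \end{smallmatrix} \right) \bmod \left( \begin{smallmatrix} P \\ P' \end{smallmatrix} \right)$, write $\left( \begin{smallmatrix} \ell \\ k \end{smallmatrix} \right) = H\left( \begin{smallmatrix} \lambda \\ \kappa \end{smallmatrix} \right)$ with $A\left( \begin{smallmatrix} \lambda \\ \kappa \end{smallmatrix} \right) = \left( \begin{smallmatrix} 0 \\ 0 \end{smallmatrix} \right) \bmod \left( \begin{smallmatrix} Y \\ Y' \end{smallmatrix} \right)$. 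Multiplying this congruence from the left by $\left( \begin{smallmatrix} c & 0 \\ 0 & c' \end{smallmatrix} \right)$ (which rescales the rows and therefore preserves the row-wise reductions), and then using the hypothesis $\left( \begin{smallmatrix} c & 0 \\ 0 & c' \end{smallmatrix} \right)A = B \bmod \left( \begin{smallmatrix} Y \\ Y' \end{smallmatrix} \right)$ on the left-hand side, yields $B\left( \begin{smallmatrix} \lambda \\ \kappa \end{smallmatrix} \right) = \left( \begin{smallmatrix} 0 \\ 0 \end{smallmatrix} \right) \bmod \left( \begin{smallmatrix} Y \\ Y' \end{smallmatrix} \right)$. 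Applying Lemma~\ref{le:LambdaKappa} in the reverse direction recovers $B\left( \begin{smallmatrix} \ell \\ k \end{smallmatrix} \right) = \left( \begin{smallmatrix} 0 \\ 0 \end{smallmatrix} \right) \bmod \left( \begin{smallmatrix} P \\ P' \end{smallmatrix} \right)$. The converse implication then follows by the same argument after observing that $\equiv$ is symmetric, which relies on the coprimality of $c$ with $Y$ and of $c'$ with $Y'$ to produce inverses $c^{-1} \bmod Y$ and $c'^{-1} \bmod Y'$, as already pointed out in the remark following Definition~\ref{def:equivalence}.

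For Item 2, the argument is a one-line manipulation: starting from $\left( \begin{smallmatrix} c & 0 \\ 0 & c' \end{smallmatrix} \right)A = B \bmod \left( \begin{smallmatrix} Y \\ Y' \end{smallmatrix} \right)$, I would right-multiply both sides by $C$ and use associativity of matrix multiplication together with the fact that $C$ has integer entries, so that reducing each row of the product mod $Y$, respectively mod $Y'$, commutes with multiplication on the right by $C$. This gives $\left( \begin{smallmatrix} c & 0 \\ 0 & c' \end{smallmatrix} \right)(AC) = BC \bmod \left( \begin{smallmatrix} Y \\ Y' \end{smallmatrix} \right)$ with the same $c, c'$. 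Finally, $AC$ and $BC$ are unimodular as products of unimodular matrices, so the criterion of Definition~\ref{def:equivalence} is met and $AC \equiv BC$.

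I do not foresee a real obstacle in the proof; the only point requiring care is verifying that left and right multiplication by integer matrices are compatible with the row-wise mod $\left( \begin{smallmatrix} Y \\ Y' \end{smallmatrix} \right)$ congruence. Left multiplication by a \emph{diagonal} matrix is trivial on this front (it just rescales each row within its own modulus), while right multiplication by an arbitrary integer matrix is compatible because each row of the product is a fixed integer-linear combination of entries of a single row of the left factor. No computation with the convergents or with the Sturmian data is needed here: the lemma is purely an algebraic statement about the equivalence relation $\equiv$.
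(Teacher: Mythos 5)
Your proof is correct and follows essentially the same route as the paper: Lemma~\ref{le:LambdaKappa} to pass between the moduli $\left( \begin{smallmatrix} P \\ P' \end{smallmatrix} \right)$ and $\left( \begin{smallmatrix} Y \\ Y' \end{smallmatrix} \right)$, compatibility of the row-wise congruence with multiplication by $\left( \begin{smallmatrix} c & 0 \\ 0 & c' \end{smallmatrix} \right)$ and by integer matrices, and for Item 2 right-multiplication by $C$ (the paper phrases it column-by-column, which is the same thing). The only cosmetic difference is that you obtain the converse in Item 1 via the symmetry of $\equiv$, whereas the paper invokes the coprimality cancellation ($cx=0 \bmod Y \Leftrightarrow x=0 \bmod Y$) directly; both rest on the same fact, so the arguments coincide in substance.
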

\begin{proof} 
Let $A$ and $B$ satisfy \eqref{stejne} and  $\lambda, \kappa \in \N$.    Then   \begin{equation}\label{druhy}\left( \begin{smallmatrix} c & 0 \\  0 & c' \end{smallmatrix} \right) A \left( \begin{smallmatrix}  \lambda \\ \kappa \end{smallmatrix} \right)= B \left( \begin{smallmatrix}  \lambda \\ \kappa \end{smallmatrix} \right)\  \ {\rm mod}  \left( \begin{smallmatrix}  Y \\  Y' \end{smallmatrix} \right)\,.
\end{equation}
Let us point out a simple fact: If $c$ is coprime with $Y$,   then  $cx = 0 \mod Y$ if and only if $x=0\mod Y$ for every $x \in \mathbb{Z}$ and analogously for the coprime values $c'$ and $Y'$.   Hence 
\begin{equation}\label{treti}\left( \begin{smallmatrix} c & 0 \\  0 & c' \end{smallmatrix} \right) A\left( \begin{smallmatrix}  \lambda \\ \kappa \end{smallmatrix} \right)=\left( \begin{smallmatrix} 0 \\ 0 \end{smallmatrix} \right) \ {\rm mod}\  \left( \begin{smallmatrix}  Y \\  Y' \end{smallmatrix} \right) 
\text{\ \  if and only if \ \  }A\left( \begin{smallmatrix}  \lambda \\ \kappa \end{smallmatrix} \right)=\left( \begin{smallmatrix} 0 \\ 0 \end{smallmatrix} \right) \ {\rm mod}\  \left( \begin{smallmatrix}  Y \\  Y' \end{smallmatrix} \right)  .\end{equation}
Equations \eqref{druhy} and \eqref{treti} imply 
\begin{equation}\label{prvni}A\left( \begin{smallmatrix}  \lambda \\ \kappa \end{smallmatrix} \right)=\left( \begin{smallmatrix} 0 \\ 0 \end{smallmatrix} \right) \ {\rm mod}\  \left( \begin{smallmatrix}  Y \\  Y' \end{smallmatrix} \right) 
\text{\ \  if and only if \ \  }B\left( \begin{smallmatrix}  \lambda \\ \kappa \end{smallmatrix} \right)=\left( \begin{smallmatrix} 0 \\ 0 \end{smallmatrix} \right) \ {\rm mod}\  \left( \begin{smallmatrix}  Y \\  Y' \end{smallmatrix} \right). \end{equation}
 Lemma \ref{le:LambdaKappa} together with  Equation \eqref{prvni} prove Item 1.  

Item 2 is a direct consequence of  Equation \eqref{druhy} in which $\left( \begin{smallmatrix}  \lambda \\ \kappa \end{smallmatrix} \right)$ represents  the first column of the matrix $C$  and then its second column.
\end{proof}


\section{A lower bound on the asymptotic critical exponent for fixed periods of constant gap sequences}

We associate to $\theta = [0,a_1,a_2,a_3, \ldots]$ a sequence of classes of equivalent matrices $\bigl([A_N]_\equiv\bigr)$ with representatives  $A_N=\left( \begin{smallmatrix} p_{N-1}  & p_N\\  q_{N-1} & q_{N} \end{smallmatrix} \right)$  and a sequence of $(\delta_N)$ with $\delta_N = [a_{N+1}, a_{N+2}, \ldots ]$. 
Let us stress that $A_N$ depends only of the first $N$ coefficients of the continued fraction of $\theta$, whereas $\delta_N$ depends only on the remaining coefficients of $\theta$.  Representatives of two consecutive classes satisfy
\begin{equation}\label{nasledovnik}A_{N+1} = A_N\left( \begin{smallmatrix}  0&1\\ 1&a_{N+1} \end{smallmatrix} \right)\,.\end{equation}

\begin{definition}\label{Forcing} Let $A \in \mathbb{Z}^{2\times 2}$ be unimodular and $\beta >0$.
We say that  $\delta >1$  is $(1+\beta)$-{\em forcing} for the class $[A]_\equiv$ if there exist $m, k, \ell \in \N,  \ell +k >0$, such that   
\begin{description}

\item[$\mathfrak{P}1$]   $A\begin{pmatrix} 1 & 0 \\  m & 1 \end{pmatrix} \begin{pmatrix}  \ell\\ k \end{pmatrix} =\begin{pmatrix} 0  \\ 0  \end{pmatrix} \mod \begin{pmatrix}  P  \\ P' \end{pmatrix}\,;$ 

\medskip

\item [$\mathfrak{P}2$]   $m+1<\delta$ \ \ and  \ \  $|\ell(\delta-m) -k| < \delta -m+1\,;$

\medskip

\item [$\mathfrak{P}3$]  

\begin{itemize}
\item if $k=\ell $, then $ \frac{1}{k} > \beta$;
\item if $k> \ell $, then $ \frac{1+m}{k+m\ell} \geq \beta$;
\item if $ k < \ell $, then $\frac{2+m}{k +(m+1)\ell} \geq \beta$.
\end{itemize}

\end{description}
The set of $(1+\beta)$-forcing $\delta$'s  for the  class $[A]_{\equiv}$ is denoted $\mathcal{F}(\beta, A)$.

\end{definition}

Note that the definition is correct because it does not depend on the choice of the representative $A$ from the class of equivalence. Indeed, only $\mathfrak{P}1$ depends on $A$ and by Lemma~\ref{lem:equivalent_solutions1} the solutions  $\ell, k$ are the same for each  matrix in $[A]_\equiv$.

\begin{theorem}\label{BetaForcing}
Let $\vv=\colour(\uu,\yy,\yy')$, where $\uu$  is a Sturmian sequence  with slope $\theta = [0, a_1,a_2, a_3, \ldots]$, and let $\beta$ be a fixed positive number.

Assume that there exist infinitely many $N\in \N$ such that $\delta_N$ is $(1+\beta)$-forcing for the class $[A_N]_\equiv$. 
Then $E^*(\vv) > 1+ \beta$.
\end{theorem}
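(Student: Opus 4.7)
My plan is to use Proposition~\ref{vzorec}, which gives $E^*(\vv) = 1 + \limsup_{N\to\infty} \Phi_N$; the target inequality $E^*(\vv) > 1+\beta$ is therefore equivalent to $\limsup_{N\to\infty} \Phi_N > \beta$. If the partial quotients $(a_n)$ are unbounded, Proposition~\ref{prop:bounds}(3) gives $E^*(\vv) = +\infty$ and the claim is immediate, so I may assume a uniform bound $a_n \le A$, which forces $x_N := Q_{N-1}/Q_N$ to stay in a compact subinterval $[c_0, 1)$ of $(0, 1)$ for all $N$.

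Fix an $N$ in the infinite set of indices for which $\delta_N$ is $(1+\beta)$-forcing for $[A_N]_\equiv$, and let $(m, \ell, k)$ be the triple furnished by Definition~\ref{Forcing}. Condition $\mathfrak{P}1$ places $(\ell, k)$ in $\mathcal{S}_1(N, m)$, while $\mathfrak{P}2$ gives both $0 \le m < a_{N+1}$ (since $\delta_N < a_{N+1}+1$) and $(\ell, k) \in \mathcal{S}_2(N, m)$; together with $\ell + k > 0$ this places $(\ell, k)$ in $\mathcal{S}(N, m)$, so
\begin{equation*}
\Phi_N \ge R_N := \frac{1+m+x_N}{k + \ell m + \ell x_N}.
\end{equation*}
Moreover, $\mathfrak{P}2$ and $\mathfrak{P}3$ together confine $m, \ell, k$ to a finite set depending only on $\beta$ and $A$; this finiteness, combined with $x_N \geq c_0$, is what will upgrade the pointwise inequalities coming from $\mathfrak{P}3$ to a uniform gap $\Phi_N \ge \beta + c$ with $c > 0$ independent of $N$.

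The three sub-cases of $\mathfrak{P}3$ should be handled in parallel. When $k = \ell$ the quotient collapses to $R_N = 1/k$; since $k$ is a positive integer strictly less than $1/\beta$, the excess $1/k - \beta$ is bounded below. When $k > \ell$ the algebraic identity
\begin{equation*}
R_N = \frac{1+m}{k+m\ell} + \frac{x_N(k-\ell)}{(k+m\ell)\bigl(k+m\ell+\ell x_N\bigr)}
\end{equation*}
has first summand $\ge \beta$ by $\mathfrak{P}3$ and a strictly positive second summand uniformly bounded below thanks to $x_N \ge c_0$, $k-\ell \ge 1$ and the boundedness of $m, \ell, k$. The genuine obstacle is the case $k < \ell$: here $R_N$ itself may fall below $\beta$, and one must instead evaluate the quotient at the adjacent bispecial index $(N, m+1)$ (or at $(N+1, 0)$ in the edge case $m+1 = a_{N+1}$), producing the target shape $\frac{2+m+x_N}{k+(m+1)\ell+\ell x_N}$ which is matched to the third clause of $\mathfrak{P}3$. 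The delicate point is that $(\ell, k)$ need not itself satisfy the modular condition at the new index; a replacement solution must be produced from the lattice described by $[A_{N+1}]_\equiv$ and Lemma~\ref{lem:equivalent_solutions1}, so that a genuine entry of the maximum defining $\Phi_N$ arises with the required lower bound, and the analogous algebraic identity then provides a uniform positive excess over $\beta$. Once this case is settled, the infinitely many $N$ in the forcing set yield $\Phi_N \ge \beta + c$ for a common $c > 0$, whence $\limsup_{N\to\infty} \Phi_N \ge \beta + c > \beta$, which is what we need.
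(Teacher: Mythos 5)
Your setup matches the paper's proof: reducing to $\limsup_{N\to\infty}\Phi_N>\beta$ via Proposition~\ref{vzorec}, disposing of unbounded $(a_n)$ by Proposition~\ref{prop:bounds}, placing $\left(\begin{smallmatrix}\ell\\ k\end{smallmatrix}\right)\in\mathcal{S}(N,m)$ with $m<a_{N+1}$, and the cases $k=\ell$ and $k>\ell$ are handled correctly. The genuine gap is the case $k<\ell$, which you call ``the genuine obstacle'' and leave unexecuted. In fact there is no obstacle and no need to move to the index $(N,m+1)$ or $(N+1,0)$: the function $f_{m,k,\ell}(x)=\frac{1+m+x}{k+\ell m+\ell x}$ has derivative of the sign of $k-\ell$, so for $k<\ell$ it is strictly decreasing on $[0,1]$ and its minimum there is attained at $x=1$, where it equals $\frac{2+m}{k+\ell(m+1)}$ --- exactly the quantity that the third clause of $\mathfrak{P}3$ in Definition~\ref{Forcing} requires to be $\geq\beta$. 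Since $x_N=Q_{N-1}/Q_N\le 1$, the \emph{same} triple $(m,k,\ell)$ already yields $R_N=f_{m,k,\ell}(x_N)\ge f_{m,k,\ell}(1)\ge\beta$, so your claim that ``$R_N$ may fall below $\beta$'' is false, and the proposed detour via a ``replacement solution'' of the congruence at a neighbouring bispecial parameter is neither justified nor carried out; as written, that case of your proof is simply missing.

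A second, related omission concerns the strict inequality. In the decreasing case $k<\ell$ the uniform excess over $\beta$ comes from an upper bound on $x_N$ strictly below $1$, not from the lower bound $c_0$ you invoke: with $a_n\le K$ one has $x_N\in\bigl[\tfrac{1}{K+1},\tfrac{K+1}{K+2}\bigr]$, and it is $x_N\le\tfrac{K+1}{K+2}<1$, together with the finiteness of admissible triples, that upgrades $f_{m,k,\ell}(x_N)\ge f_{m,k,\ell}(1)\ge\beta$ to $\Phi_N\ge\beta+\mu$ with $\mu>0$ independent of $N$ (your interval ``$[c_0,1)$'' is not compact and does not give this). With these two points repaired --- treating all three clauses of $\mathfrak{P}3$ as the minimum of $f_{m,k,\ell}$ over $[0,1]$ and using both endpoints of the compact interval for $x_N$ --- your argument becomes the paper's proof.
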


\begin{proof} 
First assume that the sequence $(a_n)$ of coefficients in the continued fraction expansion of $\theta$ is unbounded, then by Proposition~\ref{prop:bounds} $E^*(\vv)=+\infty$. In the sequel, assume that $(a_n)$ is bounded, say  by $K \in \N$.

Let  $N\in \N$  be such that  $ \delta_N$ is $(1+\beta)$-forcing  for the class $[A_N]_\equiv$.   We point out  that 
$\mathfrak{P}1$ and $\mathfrak{P}2$ of Definition \ref{Forcing} together  mean that $m< a_{N+1}$  and  $\left( \begin{matrix}  \ell \\ k \end{matrix} \right)$ belongs to $\mathcal{S}(N,m)$ -- the set used in the definition of $\Phi_N$ in Proposition \ref{vzorec}. Hence $\Phi_N$  can be rewritten as
\begin{equation}\label{FI}
\Phi_N = \max\Bigl\{\tfrac{1+m +x_N}{k + \ell m +\ell x_N}: m,k,\ell  \ \text{satisfy } \mathfrak{P}1\text{ and }  \mathfrak{P}2  \text{ for }  A=A_N \text{ and } \delta = \delta_N\Bigr\}, 
\end{equation}
where we abbreviate notation putting  $x_N = \frac{Q_{N-1}}{Q_N} \in [0,1]$. Using $\mathfrak{P}3$, we get 
\begin{equation}\label{funkceF}  \tfrac{1+m +x_N}{k + \ell m +\ell x_N} \geq  \min\Bigl\{\tfrac{1+m + x}{k+\ell m + \ell x}: x \in [0,1]\Bigr\} =\left\{\begin{array}{cl}
\frac1k > \beta & \text{if}\ k = \ell\,; \\      &\\  \frac{1+m}{k+\ell m } \geq \beta 
     &  \text{if}\ k > \ell\,; \\
     &\\
 \frac{2+m}{k+\ell (m+1) } \geq  \beta     & \text{if}\ k\ < \ell\,.
\end{array} \right. 
\end{equation} 
Hence  $\Phi_N \geq \beta$ for infinitely many $N$.  It  implies $E^*(\vv) = 1+\limsup\limits_{N\to \infty} \Phi_N \geq 1+\beta$. 

To prove the  strict inequality  $E^*(\vv) > 1+\beta$, we need to show more, namely,  that there exists a positive number, say $\mu >0$,  such that $\Phi_N\geq \mu +\beta$ for each $N$, where $\delta_N$ is $(1+\beta)$-forcing.  Existence of such $\mu$  follows from the  three facts:

\medskip

1) $x_N \in \bigl[\frac{1}{K+1}, \frac{K+1}{K+2}\bigr] \subset [0,1]$. Indeed, the recurrence relation $Q_{N} = a_{N}Q_{N-1} +Q_{N-2}$ and the inequalities  $1\leq a_n\leq K$  imply on one hand 
$$x_N=\frac{Q_{N-1}}{Q_N}=\frac{Q_{N-1}}{a_NQ_{N-1}+Q_{N-2}}\geq \frac{Q_{N-1}}{KQ_{N-1}+Q_{N-1}}=\frac{1}{K+1}$$
and on the other hand
$$x_N=\frac{1}{a_N+\frac{Q_{N-2}}{Q_{N-1}}}\leq \frac{1}{1+\frac{1}{K+1}}=\frac{K+1}{K+2}\,.$$

2) If $k\neq \ell$, the function $f_{m,k,\ell}(x) = \tfrac{1+m +x}{k + \ell m +\ell x} $ is strictly monotonous and thus the minimum of $f_{m,k,\ell}$ on the interval  $[\frac{1}{K+1}, \frac{K+1}{K+2}]$ is strictly bigger than the minimum on $[0,1]$. If $k=\ell$, the strict inequality is required by   $\mathfrak{P}3$. 

\medskip

3) There are only finitely many triplets  $m,k,\ell \in \N $ satisfying   $\mathfrak{P}2$, $\mathfrak{P}3$ and  $m <K$. 
\end{proof}

\begin{example} \label{ex:4lettersperiod13} Using the previous proposition we  show that
  $P=\Per\,\yy=1$ and  $P'=\Per\,\yy'=3$ implies  $E^*(\vv)\geq 2$ for every   $\vv = \colour(\uu, \yy,\yy')$.  In particular, we show that for any $\beta \in (0,1)$ every sequence of $(\delta_N)$  contains  infinitely many  $(1+\beta)$-forcing $\delta_N$.  
  
There are four classes of equivalent matrices with $(Y,Y')$-names:

\medskip
\centerline{$M_1=\left( \begin{smallmatrix}  0&0 \\ 1&1 \end{smallmatrix} \right), M_2=\left( \begin{smallmatrix}  0&0 \\ 1&2 \end{smallmatrix} \right), M_3=\left( \begin{smallmatrix}  0&0 \\ 1&0 \end{smallmatrix} \right), M_4=\left( \begin{smallmatrix}  0&0 \\ 0&1 \end{smallmatrix} \right)$. }
\medskip
\noindent If $[A]_\equiv$ has the name 
\begin{itemize}
    \item $M_1$,  then every $\delta > 2$ is $(1+\beta)$-forcing for $[A]_\equiv$, as  $\delta$ and $A$ satisfy Properties  $\mathfrak{P}1$, $\mathfrak{P}2$ and $\mathfrak{P}3$ with the triplet $m=k=\ell =1$.  
      \item $M_2$,  then every $\delta > 1$ is $(1+\beta)$-forcing for $[A]_\equiv$, the corresponding  triplet is  $m=0$, $k=\ell =1$. 
     \item $M_3$,  then every $\delta > 1$ is $2$-forcing for $[A]_\equiv$, the corresponding  triplet is  $m=0$, $k=1, \ell = 0$.
       \item $M_4$,  then every $\delta > 1$ is $2$-forcing for $[A]_\equiv$, the corresponding  triplet is  $m=0$, $k=0, \ell = 1$.
\end{itemize}
Therefore, if $E^*(\vv)$ is smaller than  $2$ for some $\vv$,  then necessarily  $\delta_N< 2$ and  $M_1$ is the name of  $[A_N]_\equiv$ for all $N > N_0$.  In particular, $a_{N+1} = \lfloor\delta_N\rfloor = 1$ for all $N>N_0$. 
However, if $[A_N]_\equiv$ has the name $M_1$, then the class $[A_{N+1}]_\equiv$ containing the matrix 
$A_{N+1} = A_N\left( \begin{smallmatrix}  0&1 \\ 1&1 \end{smallmatrix} \right)$ has the name $M_3$ -- a contradiction. 

\end{example}

\begin{example} \label{ex:4lettersperiod22}
Now we apply Theorem \ref{BetaForcing} to balanced sequences obtained by  colouring  with two constant gap sequences both having the period 2, i.e.,    $P=P'=2$.   Using notation \eqref{notataionH}, we have $H=2, Y=1, Y'=1$. According to Definition \ref{def:equivalence} all integer unimodular matrices belong to the same class of equivalence. By Lemma \ref{le:LambdaKappa}, the triplet $m=1$, $k=2$ and $\ell = 0$ satisfies Property $\mathfrak{P}1$ for any unimodular  matrix $A$. 
   
If we  fix $\beta =1$, then every $\delta > 2$  with the same  triplet satisfies $\mathfrak{P}2$  and $\mathfrak{P}3$.  In other words, $\delta >2$ is $2$-forcing.   Therefore, the only candidates for $\vv$  with $E^*(\vv)<2$ are  colourings of  Sturmian sequences with slope $\theta = [0,w, \overline{1}]$, where $w$ is any finite preperiod.     
For every such  $\theta$, the formula \eqref{Eq:Phi} in Proposition~\ref{vzorec} gives $\Phi_N=\frac{1+\frac{Q_{N-1}}{Q_N}}{2}$ for sufficiently large $N$, where $(Q_N)$ fulfils the recurrence relation   $Q_{N+1}=Q_N+Q_{N-1}$. Consequently,   
$E^*(\vv)= 1+\lim \Phi_N=1+\frac{\sqrt{5}+1}{4}\doteq 1.809$ is the minimum value of the asymptotic critical exponent for $P=P'=2$ and it is attained if and only if $\vv$ is a colouring of a Sturmian sequence with slope $\theta = [0, w, \overline{1}] $ for a finite preperiod $w$.  

\end{example}

\section{Admissible tails of continued fractions}
 In the previous section, we have associated with a continued fraction $\theta=[0,a_1, a_2, a_3, \ldots]$ 
a sequence of classes of equivalent matrices $\bigl([A_N]_\equiv\bigr)$. Since the number of classes is finite, there exists $N_0$ such that  any  class of equivalence either   occurs  in  $\bigl([A_N]_\equiv\bigr)_{N > N_0}$ infinitely many times or  does not occur at all in it. 
To find a   balanced sequence $\vv = \colour(\uu, \yy, \yy')$ with $E^*(\vv) \leq 1+\beta$, we should at least guarantee that 
$\delta_N$ is not $(1+\beta)$-forcing  for the class $[A_N]_\equiv$ for each $N>N_0$. 
Formally, $\delta_ N\notin \mathcal{F}( \beta, A_N)$.

\begin{remark}\label{obs:simplification}   In  Definition \ref{Forcing} only Property  $\mathfrak{P}2$  depends on $\delta$. Therefore,  for each triplet $m,k,\ell$ satisfying  $\mathfrak{P}1$  and $\mathfrak{P}3$,  we add to $\mathcal{F}(\beta, A)$ the interval of $\delta$'s satisfying $\mathfrak{P}2$, i.e., the interval  
$$ 
\begin{cases*}
 \  (m+ k-1, +\infty)\cap (m+1, +\infty)    & \quad if $\ell = 0$;\\   \ (m+ \frac{k-1}{2}, +\infty) \cap (m+1, +\infty) & \quad  if $\ell = 1$;\\
   \  (m+ \frac{k-1}{\ell+1}, m+ \frac{k+1}{\ell-1} )\cap (m+1, +\infty)  & \quad if $\ell \geq 2$.
\end{cases*}
$$
\end{remark}
The set $\mathcal{F}(\beta, A)$ is a union of several  open  intervals. Boundaries of these intervals are rational.  Rational $\delta$'s  have finite continued fractions and do not occur as tails of the continued fraction expansion of slopes of Sturmian sequences.

\begin{definition}\label{def:admissible_intervals} Let $\beta> 0$ and $A \in \mathbb{Z}^{2\times 2}$ be unimodular.   We denote 
$$\mathcal{D}(\beta, A)=  \{\delta > 1: \delta \text{ is NOT in the closure of }\  \mathcal{F}(\beta, A)  \}. $$
\end{definition}

Using the notation of $\mathcal{D}(\beta, A)$,   Theorem \ref{BetaForcing} can be rephrased  as the following corollary.

\begin{corollary}\label{cesta} Let $\theta = [0,a_1,a_2,a_3,\ldots]$ be the slope of a Sturmian sequence $\uu$ and $\beta >0$.  
If  $E^*(\vv) \leq 1+\beta$,  then there exists $N_0$ such that  for every $N>N_0$ the set $\mathcal{D}(\beta, A_N)$ contains  $\delta_N$.
\end{corollary}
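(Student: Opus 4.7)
The plan is to read off the corollary as the contrapositive of Theorem~\ref{BetaForcing}, supplemented by a short topological cleanup. The contrapositive is immediate: under the hypothesis $E^*(\vv)\le 1+\beta$, Theorem~\ref{BetaForcing} forbids infinitely many indices $N$ with $\delta_N\in \mathcal{F}(\beta,A_N)$, so there exists $N_0\in\N$ such that $\delta_N\notin \mathcal{F}(\beta,A_N)$ for every $N>N_0$. What remains is to upgrade ``not in $\mathcal{F}(\beta,A_N)$'' to ``not in $\overline{\mathcal{F}(\beta,A_N)}$''.

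For the upgrade I would combine two observations. First, $\uu$ is Sturmian and hence aperiodic, so $\theta$ is irrational and every tail $\delta_N=[a_{N+1},a_{N+2},\ldots]$ is irrational. Second, $E^*(\vv)<+\infty$ together with Proposition~\ref{prop:bounds}(3) gives a uniform bound $K\in\N$ on the partial quotients $(a_n)$, so that $\delta_N\in(1,K+1)$ for every $N$. Inside this bounded window, $\mathcal{F}(\beta,A_N)$ is a finite union of open intervals with rational endpoints: Remark~\ref{obs:simplification} shows that each triplet $(m,k,\ell)$ satisfying $\mathfrak{P}1$ and $\mathfrak{P}3$ contributes such an interval, and the finiteness of the relevant triplets follows from $\mathfrak{P}2$, which forces $m<K$, together with a short case analysis of $\mathfrak{P}3$: the case $k=\ell$ yields $k<1/\beta$, while the cases $k\neq\ell$ yield uniform upper bounds on $k$ and $\ell$ in terms of $\beta$ and $K$.

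For any finite union of open intervals with rational endpoints, the closure is obtained by adjoining at most finitely many rational boundary points. Since $\delta_N$ is irrational and lies strictly inside the open interval $(1,K+1)$, any sufficiently small neighborhood of $\delta_N$ is contained in $(1,K+1)$; hence $\delta_N\in\overline{\mathcal{F}(\beta,A_N)}$ would force $\delta_N$ to coincide with one of these rational boundary points, contradicting its irrationality. Therefore $\delta_N\in\mathcal{D}(\beta,A_N)$ for every $N>N_0$, as claimed. The only mildly technical step is the finiteness assertion in the second paragraph; everything else is either the contrapositive of Theorem~\ref{BetaForcing} or a standard topological observation about finite unions of open intervals.
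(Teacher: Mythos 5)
Your argument is correct and follows the same route as the paper: the paper states this corollary as a direct rephrasing of Theorem~\ref{BetaForcing}, with the closure step justified by the remark immediately preceding Definition~\ref{def:admissible_intervals} that $\mathcal{F}(\beta,A)$ is a union of open intervals with rational endpoints while the tails $\delta_N$ of an irrational continued fraction are irrational. Your write-up merely makes explicit the finiteness of the relevant triplets (and hence intervals) near $\delta_N$, a point the paper leaves implicit in the phrase ``union of several open intervals.''
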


\begin{lemma}\label{omez} Let $\beta >0$  and $L=\lcm(P,P'\,) >1$. Then the  set 
$\mathcal{D}(\beta, A)$ is a subset of $ (1, \lceil L(1+\beta)\rceil -2 ) $.  In particular  $\mathcal{D}(\beta, A)$ is bounded for each  equivalence class $[A]_\equiv$. 
\end{lemma}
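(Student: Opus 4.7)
The plan is to exhibit a single triplet $(m,k,\ell)$ which, for every $\delta > \lceil L(1+\beta)\rceil - 2$, simultaneously verifies Properties $\mathfrak{P}1$, $\mathfrak{P}2$, $\mathfrak{P}3$ of Definition~\ref{Forcing}. Once this is established, the interval $\bigl(\lceil L(1+\beta)\rceil - 2,\,+\infty\bigr)$ lies in $\mathcal{F}(\beta,A)$, so its closure contains $\bigl[\lceil L(1+\beta)\rceil - 2,\,+\infty\bigr)$, and Definition~\ref{def:admissible_intervals} delivers $\mathcal{D}(\beta,A)\subset\bigl(1,\lceil L(1+\beta)\rceil - 2\bigr)$; boundedness follows at once.

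First I would take $\ell = 0$ and $k = L$. Writing $A = \left( \begin{smallmatrix} a_{11} & a_{12} \\ a_{21} & a_{22} \end{smallmatrix} \right)$, the left-hand side of Property $\mathfrak{P}1$ reads
$$
A\left( \begin{smallmatrix} 1 & 0 \\ m & 1 \end{smallmatrix} \right)\left( \begin{smallmatrix} 0 \\ L \end{smallmatrix} \right) \;=\; A\left( \begin{smallmatrix} 0 \\ L \end{smallmatrix} \right) \;=\; L\left( \begin{smallmatrix} a_{12} \\ a_{22} \end{smallmatrix} \right),
$$
and both entries of this vector lie in $P\mathbb{Z}$ and $P'\mathbb{Z}$ respectively, because by~\eqref{notataionH} the number $L$ is a common multiple of $P$ and $P'$. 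Hence $\mathfrak{P}1$ holds for every non-negative integer $m$, which leaves $m$ free to be tuned in order to fulfil $\mathfrak{P}3$.

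Next I would set $m := \max\bigl\{0,\,\lceil L\beta\rceil - 1\bigr\}$. Since $k = L > 0 = \ell$, the relevant clause of $\mathfrak{P}3$ is $\tfrac{1+m}{L}\geq\beta$. This splits into two subcases: if $L\beta \leq 1$ then $m = 0$ and the inequality reduces to $\tfrac{1}{L}\geq\beta$, which holds; otherwise $m = \lceil L\beta\rceil - 1 \geq 1$ and $\tfrac{1+m}{L} = \tfrac{\lceil L\beta\rceil}{L}\geq\beta$ follows from $\lceil L\beta\rceil\geq L\beta$.

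Finally I would apply Remark~\ref{obs:simplification}: with $\ell = 0$ and $k = L$, the set of $\delta$ fulfilling $\mathfrak{P}2$ is $(m+L-1,\,+\infty)\cap(m+1,\,+\infty)$, which collapses to $(m+L-1,\,+\infty)$ because $L \geq 2$. A short ceiling manipulation using the identity $\lceil L(1+\beta)\rceil = L + \lceil L\beta\rceil$ (valid since $L\in\mathbb{N}$) shows $m + L - 1 \leq \lceil L(1+\beta)\rceil - 2$ in both subcases, with equality when $L\beta > 1$. The only mildly delicate step is this final arithmetic estimate, but it reduces in each subcase to a one-line computation; everything else is direct substitution into the definitions.
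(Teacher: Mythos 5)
Your proposal is correct and follows essentially the same route as the paper's proof: the same choice $k=L$, $\ell=0$, the same $m=\lceil L\beta\rceil-1$ (your $\max\{0,\cdot\}$ and case split are superfluous since $\beta>0$ forces $\lceil L\beta\rceil\geq 1$), and the same appeal to Remark~\ref{obs:simplification} with the identity $\lceil L(1+\beta)\rceil=L+\lceil L\beta\rceil$. The only cosmetic slip is that equality $m+L-1=\lceil L(1+\beta)\rceil-2$ in fact holds in both subcases, but since only $\leq$ is needed this does not affect the argument.
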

\begin{proof}
Note that the pair $ k=L$ and $\ell=0$ fulfills $\mathfrak{P}1$ of Definition \ref{Forcing} independently on $A$  and $m\in \N$. If  we take  $m=\lceil L\beta\rceil -1$, then   $\frac{1+m}{L} \geq \beta$ and  $\mathfrak{P}3$ of Definition \ref{Forcing} is satisfied as well.  By Remark \ref{obs:simplification},  the set $(m +k -1, +\infty) = ( \lceil L(1+\beta)\rceil -2, +\infty )$ belongs to   $\mathcal{F}(\beta, A)$.    
\end{proof}

   To compute easily $\mathcal{D}(\beta,A)$,   we  collect several practical observations on triplets $m,k,\ell \in \mathbb N$ that may influence the form of  $\mathcal{D}(\beta,A)$.  We will apply these rules in examples worked out in hand. They follow immediately from Definitions~\ref{Forcing} and~\ref{def:admissible_intervals}, from Remark~\ref{obs:simplification}  and  Lemmas~\ref{le:LambdaKappa} and~\ref{omez}.  
\begin{remark}\label{rem:ParikhAlways}  Let $A$ be a unimodular matrix, $M$ denotes its $(Y,Y')$-name.  The following statements hold true. 

\begin{enumerate} 

\item  The number of triplets $m,k, \ell$ that may affect $\mathcal{D}(\beta,A)$ is bounded. 
 In particular, by Definition~\ref{def:admissible_intervals} and Lemma~\ref{omez} it holds $(1,m+1) \subset \mathcal{D}(\beta,A) \subset (1, \lceil L(1+\beta)\rceil -2)$. It together with  Property $\mathfrak{P}3$ forces $m,k , \ell$ to satisfy  $0\leq m < \lceil L(1+\beta)\rceil -2 \ \text{and} \   k+\ell m \leq \frac{1}{\beta}(m+2).$ The number of such triplets  is finite.

 Each triplet adds to the set $\mathcal{F}(\beta, A)$ an interval as described in Remark \ref{obs:simplification}.  In fact, only  few of the triplets add really new elements to $\mathcal{F}(\beta,A)$, or equivalently, erase some elements of  $\mathcal{D}(\beta,A)$.    Hence, when  we present in examples the set $\mathcal{D}(\beta,A)$, we  list only the   triplets which  determine the set. It means that no other triplet  reduces the set $\mathcal{D}(\beta,A)$ any more.

\medskip

\item $\mathfrak{P}2$ is satisfied whenever   $m +1 <\delta$ and   
 $$\left( \begin{smallmatrix}  \ell \\ k \end{smallmatrix} \right)\in \{\left( \begin{smallmatrix}  1 \\ 0 \end{smallmatrix} \right), \left( \begin{smallmatrix}  0 \\ 1 \end{smallmatrix} \right), \left( \begin{smallmatrix}  1 \\ 1 \end{smallmatrix} \right), \left( \begin{smallmatrix}  0 \\ 2 \end{smallmatrix} \right), \left( \begin{smallmatrix}  1 \\ 2 \end{smallmatrix} \right), \left( \begin{smallmatrix}  1 \\ 3 \end{smallmatrix} \right)\}.$$
 \item   $\left( \begin{smallmatrix}  \ell \\ k \end{smallmatrix} \right)$ with $\ell \geq k+2$ does not  fulfil   $\mathfrak{P}2$ for any $m \in \mathbb N$ and $\delta >m+1$.

\medskip

 \item If $H\geq 2$, then only  $k\geq  \ell$ may satisfy $\mathfrak{P}1$ and $\mathfrak{P}2$ for some $m \in \mathbb N$ and $\delta >m+1$. 

\medskip

\item Let  $\beta < 1$ and  $\mathfrak{P}1$ be  fulfilled  for some $m \in \mathbb N$ and $\left( \begin{smallmatrix}  \ell \\ k \end{smallmatrix} \right)\in \{\left( \begin{smallmatrix}  1 \\ 0 \end{smallmatrix} \right), \left( \begin{smallmatrix}  0 \\ 1 \end{smallmatrix} \right), \left( \begin{smallmatrix}  1 \\ 1 \end{smallmatrix} \right)\}$. Then $\mathfrak{P}3$ is fulfilled, too.  Moreover,   $\mathfrak{P}2$  is satisfied for all $\delta>m+1$.

\medskip

Consequently, 

\medskip

\centerline{\quad $\mathcal{D}(\beta, A)=\emptyset$ for $m=0$ \ \ \ and  \ \ \  $\mathcal{D}(\beta, A)\subset (1,m+1)$ for $m\geq 1$.} 

\medskip
 
\item
If a triplet $m,k,\ell$  satisfies Properties  $\mathfrak{P}1$ and   $\mathfrak{P}2$,  then the triplet   $m,Tk, T\ell$ with $T\in \N, T\geq 2$,  need not be taken into account when constructing  $\mathcal{D}(\beta,A)$. 

Indeed, if $m,Tk, T\ell$   satisfies   $\mathfrak{P}1$ and   $\mathfrak{P}2$, then
$$\begin{array}{l}
(m+ Tk-1, +\infty) \subset (m+ k-1, +\infty)\\[0.3em]
(m+ \frac{Tk-1}{2}, +\infty) \subset (m+ \frac{k-1}{2}, +\infty)\\[0.3em]
(m+ \frac{Tk-1}{T\ell+1}, m+ \frac{Tk+1}{T\ell-1} )\subset (m+ \frac{k-1}{\ell+1}, m+ \frac{k+1}{\ell-1} )\ \text{for} \ \ell \geq 2.
\end{array}$$
The statement follows by  Remark \ref{obs:simplification}. 
\end{enumerate}
\end{remark}

\begin{example}\label{Ex: 24}
Let    $P=2$, $P'=4$ and $ \beta=\frac{1}{2}$.
In this case $H=2$, $L=4$, $Y=1$, $Y'=2$ and  there are three classes of equivalent matrices with $(Y, Y')$-names: 
$$M_1=\left( \begin{smallmatrix} 0& 0\\ 1&0 \end{smallmatrix} \right), M_2=\left( \begin{smallmatrix} 0& 0\\ 0&1 \end{smallmatrix} \right), M_3=\left( \begin{smallmatrix} 0& 0\\ 1&1 \end{smallmatrix} \right).$$
To  find  $\mathcal D(\beta, M_i)$ we will apply Remark \ref{rem:ParikhAlways}.

\begin{itemize}
\item $M_1=\left( \begin{smallmatrix} 0& 0\\ 1&0 \end{smallmatrix} \right)$\\
Consider   the triplet $m=0, k=2, \ell=0$.  
$\mathfrak{P}1$ is satisfied by Lemma~\ref{le:LambdaKappa}.  $\mathfrak{P}2$ is satisfied  for $\delta > 1$,  by Item 2.  
As $2=k >\ell=0$, $\mathfrak{P}3$ says  $\frac12=\frac{1+m}{k+m\ell}\geq  \beta=\frac12$. 
 
Hence, any $\delta > 1$ is $(1+\frac12)$-forcing  and thus   $\mathcal D(\beta, M_1) = \emptyset$. 

\item $M_2=\left( \begin{smallmatrix} 0& 0\\ 0&1 \end{smallmatrix} \right)$\\
Due to Lemma~\ref{le:LambdaKappa} we consider only triplets with  $k=2\kappa$ , $\ell = 2 \lambda$, where  $M_2\left( \begin{smallmatrix}  \lambda \\ \kappa \end{smallmatrix} \right) = \left( \begin{smallmatrix}  0 \\ 0 \end{smallmatrix} \right) \bmod \left( \begin{smallmatrix}  1 \\ 2 \end{smallmatrix} \right)$  forces  $\kappa$ to be even. Item 4 gives the restriction $ \kappa\geq \lambda$. Hence by Items 1 and 6, we need to work only with $\left( \begin{smallmatrix}  \ell \\ k \end{smallmatrix} \right)\in \{\left( \begin{smallmatrix}  0 \\ 4 \end{smallmatrix} \right), \left( \begin{smallmatrix}  2 \\ 4 \end{smallmatrix} \right), \left( \begin{smallmatrix}  4 \\ 4 \end{smallmatrix} \right) \}$ and $m < 4$. The inequality $\frac{1+m}{k+m\ell}\geq \frac12$ (in case $k>\ell$) and $\frac1k > \frac12$  (in case $k=\ell$) required by $\mathfrak{P}3$ is fulfilled only if $k=4, \ell =0$ and $1 \leq m<4$. 

By Remark~\ref{obs:simplification}, the set of $(1+\frac12)$-forcing $\delta$'s is $(4,+\infty)$ and thus $\mathcal D(\beta, M_2) = (1,4)$.

\item $M_3=\left( \begin{smallmatrix} 0& 0\\ 1&1 \end{smallmatrix} \right)$\\
By Lemma~\ref{le:LambdaKappa} we need to check only the values $k = 2 \kappa$, $\ell = 2\lambda$, where $\lambda +\kappa$ is even. Item 4 gives $ \kappa\geq \lambda$. Items 1 and 6  restrict   $\left( \begin{smallmatrix}  \ell \\ k \end{smallmatrix} \right)\in \{\left( \begin{smallmatrix}  0 \\ 4 \end{smallmatrix} \right), \left( \begin{smallmatrix}  2 \\ 2 \end{smallmatrix} \right) \}$ and $m < 4$. The inequality in  $\mathfrak{P}3$ is fulfilled only for $k=4, \ell=0$ and $1 \leq m<4$.  

Analogously  to the previous case,  $\mathcal D(\beta, M_3) = (1,4)$. 
 \end{itemize}
Note an interesting fact.  If we replace the value $\beta =\frac12$ by some $\beta_{-} < \frac12$,  the triplets $m=0, k= 2, \ell = 2$ and $m=2, k= 2, \ell = 2$ extend the set of $(1+\beta_{-})$-forcing $\delta$'s and $\mathcal D(\beta_{-}, M_3) =\emptyset$.

\end{example}

\section{Graphs of  admissible tails}
In this section we create the main tool that enables us to find balanced sequences with a small value of $E^*$. More precisely, for given $P=\Per\, \yy$ and $P'=\Per\,\yy'$ and $\beta >0$,  we will be able to identify candidates for suffixes (we call them ``admissible tails'')  of the continued fraction expansion of the slope $\theta$ of a~Sturmian sequence $\uu$ whose colouring $\vv=\colour(\uu,\yy,{\yy'})$ satisfies  $E^*(\vv)\leq 1+\beta$.

\begin{definition}\label{def:graph} Given $\beta >0$ and $P, \ P' \in \mathbb N$. An oriented graph $(V,E)$ is called the graph of $(1+\beta)$-admissible tails, and denoted  $\Gamma_\beta$, if 
\begin{itemize}
    \item the set of vertices  $V$ consists of classes of equivalence $\equiv$;
    \item a pair $\bigl([A]_\equiv, [B]_\equiv \bigr)$ labeled by $a$  belongs to  the set $E$ of oriented edges   if  $a = \lfloor \delta \rfloor $ for some  $\delta \in  \mathcal{D}(\beta, A) $ and  $ B \in  [ A \left( \begin{smallmatrix} 0 & 1\\  1 & a \end{smallmatrix} \right)]_\equiv$.
\end{itemize}

\end{definition}

 The graph $\Gamma_\beta$ is finite: it has a finite number of vertices as the number of classes of equivalence is  finite and a finite number of edges as by Lemma \ref{omez} the set ${\mathcal D}(\beta, A)$ is bounded for each class $[A]_\equiv$. Moreover, if $\beta_1>\beta_2$, then $\mathcal{D}(\beta_2, A) \subset \mathcal{D}(\beta_1, A)$ for every unimodular matrix $A$. Hence $\Gamma_{\beta_2}$ is a subgraph of $\Gamma_{\beta_1} $.

\medskip 
Let us recall that an oriented infinite path in a graph  $(V,E)$ is a sequence $v_0,e_0,v_1,e_1,v_2,e_2, \ldots$ such that $v_i \in V, e_i \in E$   and  $(v_i, v_{i+1}) = e_i$ for every $i \in \N$.    
  
The graph terminology allows us to rephrase Corollary \ref{cesta}  into the following theorem.

\begin{theorem}\label{Thm:hlavni}
Let $\beta>0$ and $\vv=\colour(\uu,\yy,\yy')$, where $\uu$  is a Sturmian sequence  with slope $\theta = [0, a_1,a_2, a_3, \ldots]$. 
Assume that   $E^*(\vv) \leq 1+\beta$.

 Then
 there exists  an infinite oriented path $v_0,e_0,v_1,e_1,v_2,e_2, \ldots$ in $\Gamma_\beta$ and  $N_0 \in \N$ such that 
 for every  $N \in \N$
\begin{enumerate} \item $[A_{N+N_0}]_\equiv =   v_{N}$;
\medskip  \item   $a_{N+N_0+1}$ is the label of the edge $e_{N}$.\end{enumerate}
\end{theorem}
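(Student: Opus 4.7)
The plan is to piece together the statement directly from Corollary~\ref{cesta} and equation~\eqref{nasledovnik}, which together give the two ingredients specified in Definition~\ref{def:graph}. There is no real obstacle here; the work is already done in the preceding sections, and what remains is bookkeeping to match the graph-theoretic language.

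First I would invoke Corollary~\ref{cesta}: since $E^*(\vv)\leq 1+\beta$, there exists $N_0\in\mathbb{N}$ such that for every $N\geq N_0$ we have $\delta_N\in \mathcal{D}(\beta,A_N)$. Then I would \emph{define} the candidate path by setting
\[
v_N := [A_{N+N_0}]_\equiv \qquad \text{for every } N\in\mathbb{N},
\]
and defining $e_N$ to be the pair $(v_N,v_{N+1})$ labelled by $a_{N+N_0+1}$. Items (1) and (2) of the theorem are then built into this definition; what must be checked is that each $e_N$ actually belongs to the edge set of $\Gamma_\beta$.

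To verify this, fix $N\in\mathbb{N}$ and write $a:=a_{N+N_0+1}$. For the label condition of Definition~\ref{def:graph}, I note that $a=\lfloor\delta_{N+N_0}\rfloor$ by definition of $\delta_{N+N_0} = [a_{N+N_0+1},a_{N+N_0+2},\ldots]$, and $\delta_{N+N_0}\in\mathcal{D}(\beta,A_{N+N_0})$ by Corollary~\ref{cesta}. For the successor-vertex condition, equation~\eqref{nasledovnik} yields
\[
A_{N+N_0+1} \;=\; A_{N+N_0}\begin{pmatrix} 0 & 1 \\ 1 & a \end{pmatrix},
\]
so $v_{N+1}=[A_{N+N_0+1}]_\equiv = \bigl[A_{N+N_0}\bigl(\begin{smallmatrix} 0 & 1 \\ 1 & a \end{smallmatrix}\bigr)\bigr]_\equiv$, exactly as required. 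Hence $e_N\in E$, and $(v_N,e_N)_{N\in\mathbb{N}}$ is the desired infinite oriented path in $\Gamma_\beta$.

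The only minor subtlety worth flagging is that $\delta_N>1$ for every $N$ (since each $a_{N+1}\geq 1$), so the floor $\lfloor\delta_N\rfloor$ is a positive integer, and $\delta_N$ is irrational (as $\theta$ is, being the slope of a Sturmian sequence), hence lies in the open set $\mathcal{D}(\beta,A_N)$ rather than on its boundary; this is precisely why Definition~\ref{def:admissible_intervals} excludes the boundary of $\mathcal{F}(\beta,A)$. With these remarks, the proof is complete.
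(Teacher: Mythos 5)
Your proposal is correct and follows exactly the paper's route: the paper treats this theorem as a direct rephrasing of Corollary~\ref{cesta} in graph language, and your verification that each edge lies in $\Gamma_\beta$ (via $\lfloor\delta_{N+N_0}\rfloor=a_{N+N_0+1}$ and relation~\eqref{nasledovnik}) is precisely the intended bookkeeping, including the remark that irrationality of the $\delta_N$ keeps them off the rational boundaries of $\mathcal{F}(\beta,A_N)$.
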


In the graph  $\Gamma_\beta$ of admissible tails we are interested in infinite paths  on which  vertices occur infinitely many times. Therefore it is enough to consider strongly connected components, i.e., subgraphs with an oriented path from each vertex to each vertex.   In particular, if a component has only one vertex, it has to have a loop.  

\begin{corollary} Let $\beta >0$ and $P, \ P' \in \mathbb N$. If $\Gamma_\beta$ contains no oriented cycle, then $E^*(\vv) > 1+\beta$ for every colouring $\vv$ of a Sturmian sequence by constant gap sequences of periods $P$ and $P'$. 

\end{corollary}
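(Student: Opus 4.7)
The plan is to prove this by contradiction, using Theorem \ref{Thm:hlavni} as the essentially complete input. Suppose there exists a Sturmian sequence $\uu$ with slope $\theta=[0,a_1,a_2,a_3,\ldots]$ and constant gap sequences $\yy,\yy'$ of periods $P,P'$ such that the colouring $\vv=\colour(\uu,\yy,\yy')$ satisfies $E^*(\vv)\leq 1+\beta$. By Theorem \ref{Thm:hlavni}, this hypothesis produces an integer $N_0$ and an infinite oriented path $v_0,e_0,v_1,e_1,v_2,e_2,\ldots$ in $\Gamma_\beta$ with $v_N=[A_{N+N_0}]_\equiv$ and edges labeled by the partial quotients $a_{N+N_0+1}$.

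Next, I would invoke finiteness of $\Gamma_\beta$. The vertex set consists of finitely many equivalence classes of unimodular matrices (there are only finitely many possible $(Y,Y')$-names, and $\equiv$ has finitely many classes on top of that), and by Lemma \ref{omez} each set $\mathcal{D}(\beta,A)$ is bounded, so each vertex has only finitely many outgoing edges; in particular $V$ and $E$ are finite. Since the infinite path visits infinitely many (not necessarily distinct) vertices drawn from a finite set $V$, by the pigeonhole principle some vertex $v$ occurs at two distinct indices $i<j$ along the path. The segment $v_i,e_i,v_{i+1},\ldots,e_{j-1},v_j$ with $v_i=v_j=v$ is then an oriented cycle in $\Gamma_\beta$, contradicting the hypothesis that $\Gamma_\beta$ is acyclic. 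Hence no such colouring $\vv$ exists, which means $E^*(\vv)>1+\beta$ for every colouring by constant gap sequences of periods $P$ and $P'$.

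There is really no serious obstacle here: the corollary is a direct consequence of Theorem \ref{Thm:hlavni} together with the finiteness of $\Gamma_\beta$ noted just after Definition \ref{def:graph}. The only point requiring a line of care is the extraction of the cycle from the infinite path, which is handled by the pigeonhole argument above.
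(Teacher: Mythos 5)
Your proof is correct and follows exactly the argument the paper intends: the corollary is left without an explicit proof there, being an immediate consequence of Theorem \ref{Thm:hlavni} together with the finiteness of $\Gamma_\beta$ (noted right after Definition \ref{def:graph}), and your pigeonhole extraction of a cycle from the infinite path is the standard way to make that implication explicit.
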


\begin{example} \label{5letters} 
Let us construct the graph of $(1+\beta)$-admissible tails $\Gamma_\beta$ for parameters $P=2, P'=4$ and $\beta=\frac{1}{2}$. 
By  Example \ref{Ex: 24}, the graph has three vertices with  $(Y,Y')$-names 
$$M_1=\left( \begin{smallmatrix} 0& 0\\ 1&0 \end{smallmatrix} \right), M_2=\left( \begin{smallmatrix} 0& 0\\ 0&1 \end{smallmatrix} \right), M_3=\left( \begin{smallmatrix} 0& 0\\ 1&1 \end{smallmatrix} \right)$$
and the corresponding sets 
$$\mathcal D(\beta, M_1) = \emptyset, \quad \mathcal D(\beta, M_2) = (1,4), \quad \mathcal D(\beta, M_3) = (1,4).$$
The graph $\Gamma_\beta$ and its only strongly connected component are depicted in Figure~\ref{fig:graph_component242}.

By Theorem~\ref{Thm:hlavni} the suffix $\overline{2}$ is the only candidate for the suffix of $\theta$ associated with a~Sturmian sequence $\uu$ such that $\vv=\colour(\uu,\yy,\yy')$ with $\Per\,\yy=2, \ \Per\,\yy'=4$ has $E^*(\vv)\leq \frac{3}{2}$.
And indeed, $E^*(\vv)=\frac{3}{2}$ for $\theta=[0,1,\overline{2}]$. The reader is invited to check that $\Phi_N=\frac{1}{2}$ in Proposition~\ref{vzorec} (the maximum is attained for $m=0$, $k=\ell=2$).

As we have noticed in Example \ref{Ex: 24},  if we choose instead of $\beta = \frac12$ the value $\beta_{-} < \tfrac12$, then $D(\beta_{-}, M_3) =\emptyset$ and the corresponding graph contains no oriented cycle. In other words, we can see immediately that
there is no $\vv=\colour(\uu,\yy,\yy')$ with $E^*(\vv)< \frac{3}{2}$ for  $\Per\,\yy=2, \ \Per\,\yy'=4$.

 \begin{figure}[t] 
 \centering
 \hfill
 \begin{minipage}[t]{0.45\textwidth}
\subfigure[]{
    \begin{tikzpicture}[node distance={30mm}, thick, main/.style = {}] 
	    \node[main] (1) {$\mat{0}{0}{1}{0}$}; 
	    \node[main] (2) [below right of = 1] {$\mat{0}{0}{0}{1}$}; 
	    \node[main] (3) [above right of = 2] {$\mat{0}{0}{1}{1}$}; 
	    \draw[->] (3) to [out=170,in=10,looseness=1] node[midway, above, sloped] {1} (1);
    	\draw[->] (3) to [out=190,in=-10,looseness=1] node[midway, above, sloped] {3} (1);
	    \draw[->] (2) --node[midway, above, sloped] {2}  (1); 
	    \draw[->] (2) --node[midway, above, sloped] {1}  (3); 
	    \draw[->] (2) to [out=20,in=270,looseness=1] node[midway, above, sloped] {3} (3);
    	\draw[->] (3) to [out=90,in=10,looseness=3] node[midway, above, sloped]{2} (3);
\end{tikzpicture}
}
 \end{minipage}
 \hfill
 \begin{minipage}[t]{0.4\textwidth}
    \subfigure[]{
	    \begin{tikzpicture}[node distance={30mm}, thick, main/.style = {}] 
		    \node[main] (3) {$\mat{0}{0}{1}{1}$}; 
            \draw[->] (3) to [out=90,in=10,looseness=3] node[midway, above, sloped]{2} (3);
    	\end{tikzpicture} 
	}
 \end{minipage}

    
\caption{(a) \ The graph of admissible tails for $\beta=\frac{1}{2}$ and $P=2, \ P'=4$ and (b) \ its unique strongly connected component.}
    \label{fig:graph_component242}
\
\end{figure}
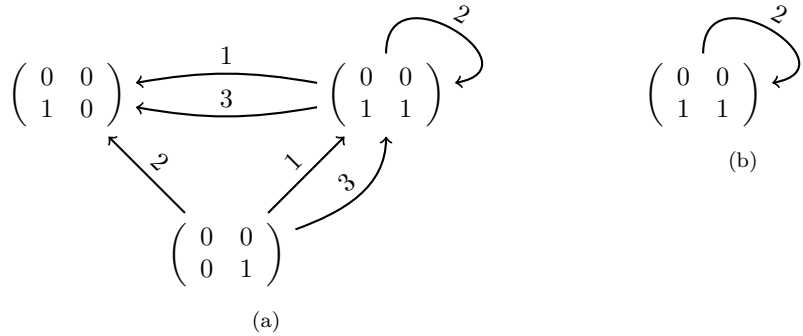

\end{example}

\begin{example} \label{ex:min4letters}\label{rem:Pelto}

The authors of \cite{RSV19} show that the least  critical exponent $E(\vv)$ on 4 letters -- in our notation $RTB(4)$ --  equals  $1+\frac{\sqrt{5}+1}{4}$ and it is reached for the colouring of the Fibonacci sequence by constant gap sequences with $P=P'=2$.
Let us deduce that $RTB^*(4) =  RTB(4)$.

Thanks to Proposition \ref{symmetry}, Examples~\ref{ex:4lettersperiod13} and  \ref{ex:4lettersperiod22},  and Remark~\ref{rem:periods_const_gap}, it remains to inspect only the case  $P=1$ and $P'=4$. 

There are six classes of equivalent matrices with $(Y,Y')$-names 
$$M_1=\left( \begin{smallmatrix} 0& 0\\ 1&0 \end{smallmatrix} \right), M_2=\left( \begin{smallmatrix} 0& 0\\ 0&1 \end{smallmatrix} \right), M_3=\left( \begin{smallmatrix} 0& 0\\ 1&1 \end{smallmatrix} \right), M_4=\left( \begin{smallmatrix} 0& 0\\ 2&1 \end{smallmatrix} \right), M_5=\left( \begin{smallmatrix} 0& 0\\ 1&2 \end{smallmatrix} \right), M_6=\left( \begin{smallmatrix} 0& 0\\ 1&3 \end{smallmatrix} \right).$$
Let us write down for each $M_i$ the triplets $m, k, \ell$ that influence the set $\mathcal{D}(\beta, M_i)$ (the other triplets satisfying Property $\mathfrak{P}1$ and $\mathfrak{P}3$ do not reduce $\mathcal{D}(\beta, M_i)$ any more): 
$$\begin{array}{rcll}
M_1=\left( \begin{smallmatrix} 0& 0\\ 1&0\end{smallmatrix} \right) & m=0, k=1,\ell=0 & \mathcal{D}(\beta, M_1)=\emptyset\\
M_2=\left( \begin{smallmatrix} 0& 0\\ 0&1 \end{smallmatrix} \right)& m=0, k=0,\ell=1 & \mathcal{D}(\beta, M_2)=\emptyset\\
M_3=\left( \begin{smallmatrix} 0& 0\\ 1&1 \end{smallmatrix} \right)& m=2, k=1,\ell=1 & \mathcal{D}(\beta, M_3)=(1,3)\\ 
M_4=\left( \begin{smallmatrix} 0& 0\\ 2&1 \end{smallmatrix} \right)& m=1, k=1,\ell=1 & \mathcal{D}(\beta, M_4)=(1,2)\\ 
M_5=\left( \begin{smallmatrix} 0& 0\\ 1&2 \end{smallmatrix} \right)& m=1, k=2,\ell=0 & \mathcal{D}(\beta, M_5)=(1,2)\\ 
M_6=\left( \begin{smallmatrix} 0& 0\\ 1&3 \end{smallmatrix} \right)& m=0, k=1,\ell=1 & \mathcal{D}(\beta, M_6)=\emptyset
\end{array}$$

 \begin{figure}[t]     
 \centering
\begin{tikzpicture}[node distance={25mm}, thick, main/.style = {}]
	\node[main] (2) [] {$\mat{0}{0}{1}{1}$};
	\node[main] (4) [below of = 2] {$\mat{0}{0}{1}{2}$};
	\node[main] (3) [right of = 4] {$\mat{0}{0}{2}{1}$};
	\node[main] (5) [right of = 2] {$\mat{0}{0}{1}{3}$};
	\node[main] (0) [left of = 2] {$\mat{0}{0}{0}{1}$};	
	\node[main] (1) [below of = 0] {$\mat{0}{0}{1}{0}$};
	\draw[->] (2) --node[midway, above, sloped] {1} (4);
	\draw[->] (2) --node[midway, above, sloped] {2} (5);
	\draw[->] (3) --node[midway, above, sloped] {1} (5);
	\draw[->] (4) --node[midway, above, sloped] {1} (3);
\end{tikzpicture}
 \caption{The graph of admissible tails for $\beta=\frac{\sqrt{5}+1}{4}$ and $P=1, \ P'=4$.}
    \label{fig:graph14}
\end{figure}
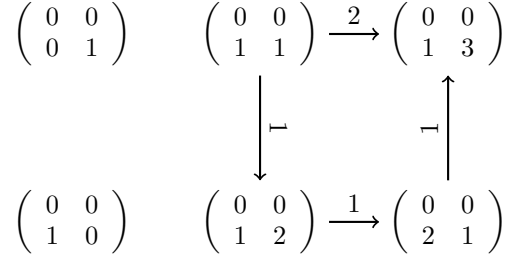
In the graph $\Gamma_\beta$ with parameters  $P=1, \ P'=4$ and $\beta=\frac{\sqrt{5}+1}{4}$ (see Figure~\ref{fig:graph14}) there is no strongly connected component. This completes the proof that $RTB^*(4)=1 + \frac{\sqrt{5}+1}{4}$.
\end{example}

\section{Asymptotic repetitive  threshold for binary to quinary balanced sequences}\label{2to5}
Using graphs of admissible tails we are able to list the least asymptotic critical exponent of $d$-ary balanced sequences for $2 \leq d \leq 5$.
\begin{itemize}
 
    \item It is known that $RTB^*(2)=RTB(2)=2 + \frac{\sqrt{5}+1}{2}\doteq3.618$ and it is reached for the Fibonacci sequence.
    \item $RTB^*(3)=2 + \frac{1}{\sqrt{2}}$: \ \ It suffices to consider $P=1, P'=2$ and $\beta=1+\frac{1}{\sqrt{2}}$. There are three classes of equivalent matrices with $(Y, Y')$-names: 
$$M_1=\left( \begin{smallmatrix} 0& 0\\ 1&0 \end{smallmatrix} \right), M_2=\left( \begin{smallmatrix} 0& 0\\ 0&1 \end{smallmatrix} \right), M_3=\left( \begin{smallmatrix} 0& 0\\ 1&1 \end{smallmatrix} \right).$$

Let us write down for each $M_i$ the triplets $m, k, \ell$ that influence the set $\mathcal{D}(\beta, M_i)$: 
$$\begin{array}{rcll}
M_1=\left( \begin{smallmatrix} 0& 0\\ 1&0\end{smallmatrix} \right) & m=1, k=1,\ell=0 & \mathcal{D}(\beta, M_1)=(1,2)\\
M_2=\left( \begin{smallmatrix} 0& 0\\ 0&1 \end{smallmatrix} \right)& m=0, k=0,\ell=1 & \mathcal{D}(\beta, M_2)=\emptyset\\
M_3=\left( \begin{smallmatrix} 0& 0\\ 1&1 \end{smallmatrix} \right)& m=3, k=2,\ell=0 & \mathcal{D}(\beta, M_3)=(1,4)
\end{array}$$
There is a unique strongly connected component in the graph $\Gamma_\beta$, which is the same as the one depicted in Figure~\ref{fig:graph_component242}. By Theorem~\ref{Thm:hlavni} the suffix $\overline{2}$ is the only candidate for the suffix of the continued fraction $\theta$ associated with a~Sturmian sequence $\uu$ such that $\vv=\colour(\uu,\yy,\yy')$ with $\Per\,\yy=1, \ \Per\,\yy'=2$ has $E^*(\vv)\leq 2+\frac{1}{\sqrt{2}}$.
And indeed, $E^*(\vv)=2+\frac{1}{\sqrt{2}}$ for $\theta=[0,1,\overline{2}]$. The reader is invited to check that $\Phi_N=\frac{2+\frac{Q_{N-1}}{Q_N}}{1+\frac{Q_{N-1}}{Q_N}}$ (the maximum in \eqref{Eq:Phi} is attained for $m=1, \ k=0, \ \ell=1$)  and $Q_{N+1}=2Q_N+Q_{N-1}$ for $N\geq 1$. Using Proposition~\ref{vzorec} we have $E^*(\vv)=1+\lim_{N\to \infty}\Phi_N=2+\frac{1}{\sqrt{2}}$.

   \item $RTB^*(4)=1 + \frac{\sqrt{5}+1}{4}$: \ \ It was  shown in Example~\ref{ex:min4letters}.
    \item $RTB^*(5)=\frac{3}{2}$: \ \ We set $\beta=\frac{1}{2}$. By Remark~\ref{rem:periods_const_gap}, Proposition \ref{symmetry}  and Lemma~\ref{delitele} we  have to inspect the pairs  
    $(P,P') \in \{(2,3), (2,4), (1,6), (1,8)\}$. In all cases besides $(2,4)$ there are no strongly connected components in the graph $\Gamma_\beta$. 
    The case of $P=2,\ P'=4$ was examined in Example~\ref{5letters}, where it was shown that the unique admissible tail is $\overline{2}$ and $E^*(\vv)=\frac{3}{2}$ for $\theta=[0,1,\overline{2}]$.
\end{itemize}

\medskip 
The method used  to determine  $RTB^*(d)$ for $d\leq 5$ does not work for  $d=6$. Let us explain why. 
To find $RTB^*(6)$ we have to inspect the pairs 
$(P,P') \in \{(3,3), (3,4),  (2,6), (2,8), (1,5),  (1,9), (1,12), (1,16)\}$. In the sequel, we will show that  $RTB^*(6)=1+ \frac{3\sqrt{65}-5}{80}\doteq 1.2398$. If we  construct  the graph $\Gamma_\beta$  with the optimal value   $\beta = \frac{3\sqrt{65}-5}{80} $,  
we find out that in  all cases besides $(P,P')=(1,16)$ there are no strongly connected components in  $\Gamma_\beta$. 

Hence we focus on the case $(P,P')=(1,16)$.  For this pair, 
    the strongly connected component of $\Gamma_\beta$ is depicted in Figure~\ref{fig:6pismen} (we will show later that the bold cycle corresponds to the unique $(1+\beta)$-admissible tail).
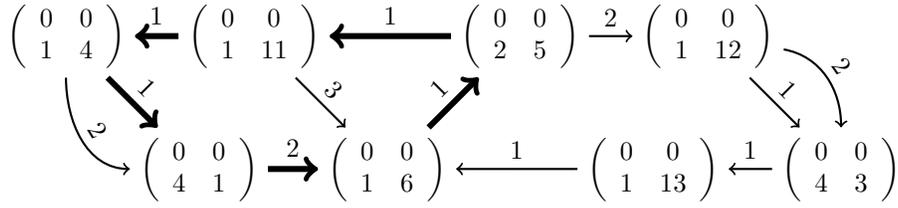
\begin{figure}    

 \centering
    \begin{tikzpicture}[scale = 0.1, node distance={25mm}, thick, main/.style = {}]	
	\node[main] (6)[] {$\mat{0}{0}{1}{6}$};
	\node[main] (15)[above right of = 6] {$\mat{0}{0}{2}{5}$};
	\node[main] (17)[right of = 15] {$\mat{0}{0}{1}{12}$};
	\node[main] (11)[below right of = 17] {$\mat{0}{0}{4}{3}$};
	\node[main] (13)[left of = 11] {$\mat{0}{0}{1}{13}$};
	
	\node[main] (5)[above left of = 6] {$\mat{0}{0}{1}{11}$};
	\node[main] (12)[left of = 5] {$\mat{0}{0}{1}{4}$};
	\node[main] (8)[below right of = 12] {$\mat{0}{0}{4}{1}$};
	
	\draw[->, line width=0.8mm] (5) --node[midway, above, sloped] {1} (12);
	\draw[->] (5) --node[midway, above, sloped] {3} (6);
	\draw[->, line width=0.8mm] (6) --node[midway, above, sloped] {1} (15);
	\draw[->, line width=0.8mm] (8) --node[midway, above, sloped] {2} (6);
	\draw[->] (11) --node[midway, above, sloped] {1} (13);
	\draw[->, line width=0.8mm] (12) --node[midway, above, sloped] {1} (8);
	\draw[->] (12) to [out=-90,in=180,looseness=1]  node[midway, above, sloped] {2} (8);
	\draw[->] (13) --node[midway, above, sloped] {1} (6);
	\draw[->, line width=0.8mm] (15) --node[midway, above, sloped] {1} (5);
	\draw[->] (15) --node[midway, above, sloped] {2} (17);
	\draw[->] (17) --node[midway, above, sloped] {1} (11);
	\draw[->] (17) to [out=-10,in=90,looseness=1]  node[midway, above, sloped] {2} (11);
\end{tikzpicture}
 \caption{The strongly connected component of the graph of admissible tails for $\beta=\frac{3\sqrt{65}-5}{80}$ and $P=1, \ P'=16$.}
    \label{fig:6pismen}
\end{figure}

Although $\overline{1,1,3}$ is the label of an infinite path in $\Gamma_\beta$, the asymptotic critical exponent $E^*(\vv) \geq 1.41>1+\beta$ for any colouring of a Sturmian sequence with slope $\theta$ having eventually periodic continued fraction with the period $\overline{1,1,3}$.  In other words, 
the implication in Theorem~\ref{Thm:hlavni} cannot be reversed. 
Our next goal is to reduce the graph $\Gamma_\beta$ in order to exclude  paths corresponding to the asymptotic critical exponent  $>1+\beta$.

\section{Reduction of the graphs}

We have seen that the existence of an infinite path in the graph of admissible tails does not guarantee a small value of $E^*$. There may be even uncountably many paths in the graph $\Gamma_\beta$ corresponding to $E^*(\vv) > 1+\beta$.  However, we will show how to reduce the graph so that the number of unsuitable paths is diminished. This method enables us to find $RTB^*(d)$ for $6 \leq d \leq 10$.

We describe two reasons why the implication in Theorem~ \ref{Thm:hlavni} cannot be reversed.
\begin{enumerate}
    \item

Let  $d_1,d_2,d_3,\ldots$ be labels of  edges in  an infinite path  $v_0,e_0,v_1,e_1,v_2,e_2, \ldots$ in $\Gamma_\beta$ and all vertices of the path occurs in it infinitely many times.  If 
    the path corresponds to an asymptotic critical exponent $\leq 1+\beta$, then for each $N \in \mathbb{N}$, the continued fraction $[d_{N+1}, d_{N+2}, \ldots] $  belongs to $\mathcal{D}(\beta, A)$, where $A \in v_N$.  But the  construction of the graph requires only $d_{N+1} =\lfloor \delta\rfloor$ for some $\delta \in \mathcal{D}(\beta, A)$.  

Hence,  if  it is not possible to extend the edge starting in $v_{N}$ and labeled by $d_{N+1}$ to an infinite path whose labels make the continued fraction expansion of some $\delta \in \mathcal{D}(\beta, A)$, then this edge may be erased without influencing the validity of Theorem~\ref{Thm:hlavni}. 

We call this process {\em forward reduction}.

\item  The terms used to  describe Property $\mathfrak{P}3$ in Definition~\ref{Forcing} represent the  minimal value  of the function $f_{m,k,\ell}(x) = \frac{1+m + x}{k+\ell m+ \ell x}$ for  $x \in [0,1]$, see \eqref{funkceF}. The actual value of $x$  we use in evaluation of $\Phi_N$ in \eqref{FI} is $x_N=Q_{N-1}/Q_N =[0,a_N,a_{N-1}, a_{N-2}, \ldots, a_1]$. 
It may happen that $f_{m,k,\ell}(x_N)> \beta$ even if the minimum of  $f_{m,k,\ell}$ on $[0,1]$ is smaller than $\beta$. 

To construct $\Gamma_\beta$ we use only the fact that $x_N  \in [0,1]$. 
As  $x_N=[0,a_N,a_{N-1}, a_{N-2}, \ldots, a_1]$, the value $x_N$ is given by the labels of edges of a path of length $N$ ending in $v_N=[A]_\equiv$.  Assume that the structure of  $\Gamma_\beta$ enables to deduce that all paths entering the vertex $[A]_\equiv$ have  $x_N \in  J \subset [0,1] $.  If  for some $m,k,\ell$ satisfying  Property $\mathfrak{P}1$  the following inequality holds true
 $$  \min\{f_{m,k,\ell}(x): x \in J\} \ > \  \beta\,, \ \ \
$$
then $\delta$ satisfying Property $\mathfrak{P}2$ is $(1+\beta)$-forcing, i.e., all such $\delta$'s should be deleted from the set  $\mathcal{D}(\beta, A)$. The new set $\mathcal{D}'(\beta, A)$ may cause that some edges starting in the vertex $[A]_\equiv$ are deleted as well.

We call this process {\em backward reduction}.

\end{enumerate}
 We always apply the forward and backward reductions together with searching for strongly connected components as long as the graph changes.

Let us  illustrate both kinds of reductions on the following example.
\begin{example}\label{ex:343}
Let us construct $\Gamma_\beta$ for $P=3, \ P'=4,\ \beta = \tfrac13$. Then $H=1$ and $L=12$. There are 24 classes of equivalence with the following $(Y, Y')$-names:
$$\left( \begin{smallmatrix} 0& 1\\ 0&1 \end{smallmatrix} \right), \left( \begin{smallmatrix} 1& 0\\ 1&0 \end{smallmatrix} \right), \left( \begin{smallmatrix} 1& 0\\ 1&2\end{smallmatrix} \right),\left( \begin{smallmatrix} 1& 1\\ 2&1 \end{smallmatrix} \right), \left( \begin{smallmatrix} 1& 2\\ 1&3 \end{smallmatrix} \right), 
\left( \begin{smallmatrix} 0& 1\\ 1&1 \end{smallmatrix} \right), \left( \begin{smallmatrix} 0& 1\\ 1&0 \end{smallmatrix} \right), \left( \begin{smallmatrix} 1& 1\\ 1&0 \end{smallmatrix} \right), \left( \begin{smallmatrix} 1& 2\\ 1&0 \end{smallmatrix} \right), \left( \begin{smallmatrix} 0& 1\\ 1&3 \end{smallmatrix} \right), 
\left( \begin{smallmatrix} 1& 0\\ 1&3 \end{smallmatrix} \right), \left( \begin{smallmatrix} 1& 1\\ 1&3\end{smallmatrix} \right),$$
$$\left( \begin{smallmatrix} 1& 0\\ 0&1\end{smallmatrix} \right), \left( \begin{smallmatrix} 1& 0\\ 2&1\end{smallmatrix} \right), \left( \begin{smallmatrix} 1& 0\\ 1&1\end{smallmatrix} \right), \left( \begin{smallmatrix} 1& 2\\ 0&1\end{smallmatrix} \right), \left( \begin{smallmatrix} 1& 1\\ 0&1\end{smallmatrix} \right), \left( \begin{smallmatrix} 0& 1\\ 1&2\end{smallmatrix} \right), \left( \begin{smallmatrix} 1& 1\\ 1&2\end{smallmatrix} \right),\left( \begin{smallmatrix} 1& 2\\ 1&2\end{smallmatrix} \right),\left( \begin{smallmatrix} 1& 2\\ 1&1\end{smallmatrix} \right),\left( \begin{smallmatrix} 0& 1\\ 2&1\end{smallmatrix} \right),\left( \begin{smallmatrix} 1& 2\\ 2&1\end{smallmatrix} \right), \left( \begin{smallmatrix} 1& 1\\ 1&1\end{smallmatrix} \right).
$$
By Lemma \ref{omez},  $\mathcal{D}(\beta, M_i)\subset (1, 14)$. We write down for each class the triplets $m, k, \ell$ that influence the set $\mathcal{D}(\beta, M_i)$: 

$$\begin{array}{lll} & &  \textbf{list of triplets}\ (m,k,\ell)\\
&&\\
M_1=\left( \begin{smallmatrix} 0& 1\\ 0&1 \end{smallmatrix} \right)\qquad & \mathcal{D}(\beta, M_1)=\emptyset & \quad \ \ (0, 0,1)\\ 
M_2=\left( \begin{smallmatrix} 1& 0\\ 1&0 \end{smallmatrix} \right)& \mathcal{D}(\beta, M_2)=\emptyset& \quad \ \ (0,1,0) \\
M_3=\left( \begin{smallmatrix} 1& 0\\ 1&2\end{smallmatrix} \right) & \mathcal{D}(\beta, M_3)=\emptyset & \quad \ \ (0,2,0)\\
M_4=\left( \begin{smallmatrix} 1& 1\\ 2&1 \end{smallmatrix} \right) & \mathcal{D}(\beta, M_4)=\emptyset& \quad \ \ (0, 2,1)\\
M_5=\left( \begin{smallmatrix} 1& 2\\ 1&3 \end{smallmatrix} \right)  & \mathcal{D}(\beta, M_5)=\emptyset&\quad \ \  (0,1,1)\\
M_6=\left( \begin{smallmatrix} 0& 1\\ 1&1 \end{smallmatrix} \right)  & \mathcal{D}(\beta, M_6)=\emptyset& \quad \ \ (0,3,1)\\
M_7=\left( \begin{smallmatrix} 0& 1\\ 1&0 \end{smallmatrix} \right) & \mathcal{D}(\beta, M_7)=(1,2)& \quad \ \ (0,3,0)\\ 
M_8=\left( \begin{smallmatrix} 1& 1\\ 1&0 \end{smallmatrix} \right)& \mathcal{D}(\beta, M_8)=(1,2)&\quad \ \  (0,3,0 )\\
M_9=\left( \begin{smallmatrix} 1& 2\\ 1&0 \end{smallmatrix} \right) & \mathcal{D}(\beta, M_9)=(1,2)&\quad \ \  (0, 3,0 )\\
M_{10}=\left( \begin{smallmatrix} 0& 1\\ 1&3 \end{smallmatrix} \right) &\mathcal{D}(\beta, M_{10})=(1,2)  & \quad \ \ (1, 4, 2), (2,7,1) \\
 M_{11}=\left( \begin{smallmatrix} 1& 0\\ 1&3 \end{smallmatrix} \right)&\mathcal{D}(\beta, M_{11})=(1,4)&\quad \ \  (1,4,0)\\
M_{12}=\left( \begin{smallmatrix} 1& 1\\ 1&3\end{smallmatrix} \right) & \mathcal{D}(\beta, M_{12})=(1,\frac{5}{2})& \quad \ \ (1,4,1)\\
M_{13}=\left( \begin{smallmatrix} 1& 0\\ 0&1\end{smallmatrix} \right) &   \mathcal{D}(\beta, M_{13})=(1,3) \cup (\frac{7}{2}, 4)&\quad \ \ (1,4,0), (2,2,3)\\
M_{14}=\left( \begin{smallmatrix} 1& 0\\ 2&1\end{smallmatrix} \right) & \mathcal{D}(\beta, M_{14})=(\frac{3}{2}, 4)&\quad \ \ (0,2,3), (1,4,0)\\
M_{15}=\left( \begin{smallmatrix} 1& 0\\ 1&1\end{smallmatrix} \right)& \mathcal{D}(\beta, M_{15})=(1,2) \cup (\frac{5}{2}, 4)&\quad \ \ (1,2,3), (1,4,0)\\
M_{16}=\left( \begin{smallmatrix} 1& 2\\ 0&1\end{smallmatrix} \right)& \mathcal{D}(\beta, M_{16})=(1,2)&\quad \ \ (1,3,1)\\
M_{17}=\left( \begin{smallmatrix} 1& 1\\ 0&1\end{smallmatrix} \right)  & \mathcal{D}(\beta, M_{17})=(1,2)\cup (4,\frac{9}{2})&\quad \ \ (1,2,2),(2,6,1)\\
M_{18}=\left( \begin{smallmatrix} 0& 1\\ 1&2\end{smallmatrix} \right)& \mathcal{D}(\beta, M_{18})=\emptyset&\quad \ \ (0,3,2), (1,6,0), (2,5,2) \\
M_{19}=\left( \begin{smallmatrix} 1& 1\\ 1&2\end{smallmatrix} \right) & \mathcal{D}(\beta, M_{19})=(2,3)&\quad \ \ (0,1,2), (1,6,0), (2,3,2)\\
M_{20}=\left( \begin{smallmatrix} 1& 2\\ 1&2\end{smallmatrix} \right)&\mathcal{D}(\beta, M_{20})=(1,2)&\quad \ \ (1,3,2), (1,6,0), (3,5,2)\\
M_{21}=\left( \begin{smallmatrix} 1& 2\\ 1&1\end{smallmatrix} \right)&\mathcal{D}(\beta, M_{21})=(3,4)&\quad \ \ (0,2,2), (2,5,1)\\
M_{22}=\left( \begin{smallmatrix} 0& 1\\ 2&1\end{smallmatrix} \right)&\mathcal{D}(\beta, M_{22})=(1,3)&\quad \ \ (1,5,1)\\
M_{23}=\left( \begin{smallmatrix} 1& 2\\ 2&1\end{smallmatrix} \right)&\mathcal{D}(\beta, M_{23})=(1,3)&\quad \ \ (2,4,2), (3,7,1)\\
M_{24}=\left( \begin{smallmatrix} 1& 1\\ 1&1\end{smallmatrix} \right)& \mathcal{D}(\beta, M_{24})=(1,4)&\quad \ \ (3,4,2), (3,8,1)
\end{array}$$

\medskip

A unique strongly connected component of the graph $\Gamma_\beta$ is depicted in Figure~\ref{fig:343equality}.
Using our computer program we can see that for the Sturmian sequence $\uu$ associated to $\theta=[0,3,\overline{1,1,1,2}]$, the balanced sequence $\vv=\colour(\uu,\yy,\yy')$ with $\Per\ \yy=3, \ \Per \ \yy'=4$ has $E^*(\vv)=\frac{4}{3}$.


If we search for $\vv=\colour(\uu,\yy,\yy')$ such that $E^*(\vv)< \frac{4}{3}$, i.e., if we choose $\beta_{-} < \tfrac13$, then by Theorem~\ref{BetaForcing} we have to exclude also the solution $m=0$, $k=\ell=3$, which reduces $\mathcal D(\beta_{-}, M_{10})$ to $\emptyset$, therefore no strongly connected component remains in the graph. To summarize, there is no $\vv=\colour(\uu,\yy,\yy')$ with $E^*(\vv)< \frac{4}{3}$ for $\Per\,\yy=3, \ \Per\,\yy'=4$.

\begin{figure}
    \centering
  

\begin{tikzpicture}[node distance={25mm}, thick, main/.style = {}]
	\node[main] (11)[] {$\mat{1}{2}{1}{1}$};
	\node[main] (6)[below right  of = 11] {$\mat{1}{1}{0}{1}$};
	\node[main] (7)[above right of = 11] {$\mat{1}{2}{1}{0}$};
	\node[main] (1)[right of = 6] {$\mat{0}{1}{1}{0}$};
	\node[main] (2)[right of = 1] {$\mat{1}{0}{1}{1}$};
	\node[main] (12)[right of = 2] {$\mat{1}{2}{0}{1}$};
	\node[main] (0) [right of = 7] {$\mat{1}{0}{0}{1}$};
	\node[main] (15)[right of = 0] {$\mat{0}{1}{1}{3}$};
	\node[main] (3)[right of = 15] {$\mat{1}{1}{1}{0}$};
	
	\draw[->] (0) --node[midway, above, sloped] {3} (15);
	\draw[->] (1) --node[midway, above, sloped] {1} (6);
	\draw[->] (2) --node[midway, above, sloped] {2} (15);
    \draw[->] (2) --node[midway, above, sloped] {3} (1);
	\draw[->] (3) --node[midway, above, sloped] {1} (12);
	\draw[->] (6) --node[midway, above, sloped] {1} (11);
	\draw[->] (6) --node[midway, above, sloped] {4} (7);
	\draw[->] (7) --node[midway, above, sloped] {1} (0);
	\draw[->] (11) --node[midway, above, sloped] {3} (7);
	\draw[->] (12) --node[midway, above, sloped] {1} (2);
	\draw[->] (15) --node[midway, above, sloped] {1} (3);
\end{tikzpicture}
	
    \caption{The strongly connected component of the graph of admissible tails for $\beta=\frac{1}{3}$ and $P=3, \ P'=4$.}
    \label{fig:343equality}
\end{figure}
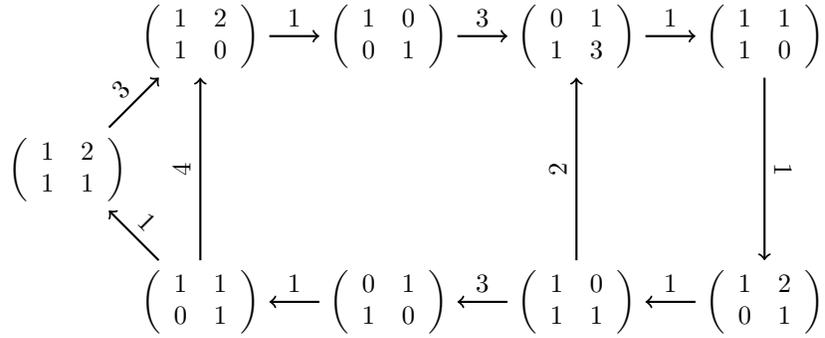

Let us apply first the forward reduction on the graph from Figure~\ref{fig:343equality}.
Consider the vertex $M_{17}=\left( \begin{smallmatrix} 1& 1\\ 0&1\end{smallmatrix} \right)$ and the outgoing edge labeled by 4. Each prolongation to an infinite path has the next edge label equal to 1. Therefore the corresponding $\delta=[4,1,\ldots] \in (\frac{9}{2},5)$, i.e., 
$\delta \not\in \mathcal{D}(\beta, M_{17})=(1,2)\cup (4,\frac{9}{2})$. Consequently, the edge labeled by 4 may be erased.

Next, we apply the backward reduction.
Consider the vertex $M_{21}=\left( \begin{smallmatrix} 1& 2\\ 1&1\end{smallmatrix} \right)$. The sequence of edge labels of each ingoing path ends in $3,1,1$. For $m=1, k=6, \ell=1$ satisfying $M_{21}\left( \begin{smallmatrix} 1& 0\\ m&1\end{smallmatrix} \right)\left( \begin{smallmatrix} \ell\\ k\end{smallmatrix} \right)=\left( \begin{smallmatrix} 0\\0\end{smallmatrix} \right)\mod \left( \begin{smallmatrix} 3\\4\end{smallmatrix} \right)$ we have $[0,1,1,3,\ldots] \in J=[\frac{5}{9}, \frac{4}{7}]$ and $ \min\{f_{m,k,\ell}(x): x \in J\}=\frac{2+5/9}{7+5/9} = \frac{23}{68}>\beta=\frac{1}{3}$. Therefore, by Remark~\ref{obs:simplification} the triplet $m=1, k=6, \ell=1$ reduces $\mathcal{D}(\beta, M_{21})$ to  $\mathcal{D}'(\beta, M_{21}) =(3,4) \cap (1,\frac{7}{2})= (3,\frac{7}{2})$.

Finally, we apply once more the forward reduction. Consider again the vertex $M_{21}$, now with $\mathcal{D}'(\beta, M_{21})=(3,\frac{7}{2})$, and its unique outgoing edge labeled by 3. Each prolongation to an infinite path has the next edge label equal to 1. Therefore the corresponding $\delta=[3,1,\ldots] \in (\frac{7}{2},4)$, i.e., 
$\delta \not\in \mathcal{D}'(\beta, M_{21})=(3,\frac{7}{2})$. Thus, the edge labeled by 3 may be erased.
Consequently, the graph is reduced so that it contains a unique cycle labeled by $\overline{1,1,1,2}$, which proves that it is the only $(1+\frac{1}{3})$-admissible suffix of $\theta$.  
\end{example}

\section{Asymptotic repetitive threshold for senary to denary balanced sequences}
In order to find $RTB^*(d)$,  we first detect  possible periods $P=\Per\,\yy, P'= \Per\,\yy'$ which give a $d$-ary balanced sequence.  For this task we  use Remark~\ref{rem:periods_const_gap}.  Then some pairs are eliminated due to    Proposition~\ref{symmetry}  and 
Lemma~\ref{delitele}. 

The starting parameter $\beta$ for the construction of graphs of $(1+\beta)$-admissible tails can be  chosen to be the value of the minimal critical exponent since by definition $E(\vv)\geq E^*(\vv)$ for every  sequence $\vv$.

By Proposition~\ref{prop:bounds} we have $E^*(\vv) \geq 1+\frac{1}{P\,P' }$ for every balanced sequence $\vv=\colour(\uu,\yy,\yy')$. Therefore when searching for $E^*(\vv)\leq 1+\beta$ we have to consider only the pairs $(P, P')$ such that $\frac{1}{\beta} \leq {P \,P'}$.

As soon as we find $\vv$ with $E^*(\vv)< 1+\beta$, we lower $\beta$ to $\beta'=E^*(\vv)-1$ and construct graphs of $(1+\beta')$-admissible tails for the remaining pairs $(P,P')$.

\medskip
Using the above described procedure, we have found the minimal asymptotic critical exponent of $d$-ary balanced sequences up to $d=10$. They are listed in  Table~\ref{RTB*}.

\begin{table}[t]
\centering
\caption{The asymptotic repetitive threshold  $RTB^*(d)$ for alphabets of size  $d\leq 10$ and  the parameters of a~$d$-ary balanced sequence $\vv=\colour(\uu, \yy, \yy')$ for which the threshold $RTB^*(d)$ is reached:   $\theta$ is the slope of a Sturmian sequence $\uu$, $P=\Per\,\yy$ and $P'=\Per\,\yy'$.}\label{RTB*}
\setlength{\tabcolsep}{3.5pt}
\renewcommand{\arraystretch}{1.5}
\resizebox{0.7\textwidth}{!} {
\begin{tabular}{|r|l|l|r|r|}
\hline
$d$ &  $RTB^*(d)$&$ \theta$ & $P$ & $P'$  \\
\hline
\hline
2  &$2 + \frac{\sqrt{5}+1}{2}\doteq3.618034$ & $[0, \overline{1}]$ & 1 & 1 \\
\hline

3  & $2 + \frac{1}{\sqrt{2}}\doteq 2.707107$ & $[0, 1,\overline{2}]$ & 1 & 2  \\
\hline
4 & $1 + \frac{\sqrt{5}+1}{4}\doteq 1.809017$ & $[0, \overline{1}]$ & 2 & 2 \\
\hline
5& $\frac{3}{2}=1.5$ & $[0, 1,\overline{2}]$ & 2 & 4  \\
\hline
6& $\frac{75+3\sqrt{65}}{80}\doteq 1.239835 $  &
$[0,4,\overline{1,2,1,1,1}]$
& 1 & 16  \\
\hline
7& $\frac{49 + \sqrt{577}}{64}\doteq 1.140950$ & $[0, 5,1, \overline{1,1,1,5,2}]$ & 1 & 32   \\
\hline
8& $1+\frac{3-\sqrt{5}}{16}\doteq 1.047746 $ & $[0, \overline{1}]$ & 8 & 8  \\
\hline
9  & $\frac{21-\sqrt{20}}{16}\doteq 1.032992$  & $[0, 1,\overline{4}]$ & 8 & 16 \\
\hline
10 & $\frac{364-21\sqrt{7}}{304}\doteq 1.0146027$  & 
$[0,6,\overline{1,1,1,1,2,1,2,1,1,1}]$
& 4 & 64 \\
\hline
\end{tabular}
}

\end{table}

Let us comment properties of the graphs $\Gamma_\beta$ with  the optimal value $\beta=RTB^*(d)-1$. 
\begin{description}
\item[$2 \leq d \leq 5$\ ] 
It was sufficient to use the graphs of admissible tails without reduction, as explained in Section~\ref{2to5}. 
\item[$6\leq d\leq 9$\ ] The  reduction was necessary. After reduction of the graph $\Gamma_\beta$ constructed  for the pair $(P, P')$ from Table~\ref{RTB*},  there is always a unique strongly connected component in the form of a cycle, i.e., the period of the continued fraction was determined uniquely by the graph.  For all other pairs $(P,P')$ of admissible periods the graph has no strongly connected component. Let us point out that for 6 letters the forward reduction suffices, while for more letters both the forward and the backward reduction is needed. For 6 letters, the unique $(1+\beta)$-admissible tail corresponds to the bold cycle in Figure~\ref{fig:6pismen}.
\item[$d=10$\ ]
 For $(P,P') \neq (4, 64)$ the graph  has no strongly connected component. For  $P=4,P'=64$,  a unique strongly connected component of $\Gamma_\beta$ after reduction is depicted in Figure~\ref{fig:graf10pismen}. 
 Thus, there are two oriented cycles sharing two vertices.  Hence a ``human intervention''  is needed to pick up the suitable continued  fraction. Using our computer program we obtain: 
$E^*(\hat\vv)=RTB^*(10)$ for the balanced sequence $\hat\vv=\colour(\hat\uu, \yy, \yy')$, where
$\hat\theta=[0,6,\overline{1,1,1,1,2,1,2,1,1,1}]$. 
 \begin{figure}[b]
    \centering
    \begin{tikzpicture}[node distance={25mm}, thick, main/.style = {}]
	\node[main] (15)[] {$\mat{0}{0}{2}{5}$};
	\node[main] (5)[right of = 15] {$\mat{0}{0}{1}{11}$};
	\node[main] (12)[right of = 5] {$\mat{0}{0}{1}{4}$};
	\node[main] (8)[below of = 12] {$\mat{0}{0}{4}{1}$};
	\node[main] (6)[below of = 15] {$\mat{0}{0}{1}{6}$};
	\node[main] (13)[left of = 6] {$\mat{0}{0}{1}{13}$};
	\node[main] (11)[left of = 13] {$\mat{0}{0}{4}{3}$};
	\node[main] (17)[above of = 11] {$\mat{0}{0}{1}{12}$};
	
	\draw[->] (5) --node[midway, above, sloped] {1} (12);
	\draw[->] (6) --node[midway, above, sloped] {1} (15);
	\draw[->] (8) --node[midway, above, sloped] {2} (6);
	\draw[->] (11) --node[midway, above, sloped] {1} (13);
	\draw[->] (12) --node[midway, above, sloped] {1} (8);
	\draw[->] (13) --node[midway, above, sloped] {1} (6);
	\draw[->] (15) --node[midway, above, sloped] {1} (5);
	\draw[->] (15) --node[midway, above, sloped] {2} (17);
	\draw[->] (17) --node[midway, above, sloped] {1} (11);
\end{tikzpicture}
    \caption{The strongly connected component of the graph of admissible tails for $\beta=\frac{60-21\sqrt{7}}{304}$ and $P=4, \ P'=64$.}
    \label{fig:graf10pismen}
\end{figure}
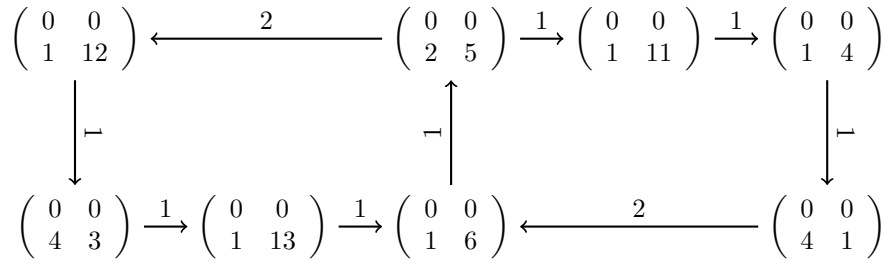
In Figure~\ref{fig:graf10pismen}, $\hat\theta$ corresponds to an infinite path going alternatively through the right hand cycle, then the left hand cycle, and again the right hand cycle, then the left hand cycle, and so on. 
 The following two arguments show that if $\theta$ does not have a suffix corresponding to such alternation of the right and left hand cycle, then $E^*(\vv)>1+\beta$. First we observe, that every  path  in the graph from   Figure \ref{fig:graf10pismen} uses infinitely many times the vertices named  $\left(\begin{smallmatrix}0& 0 \\ 1 & 6\end{smallmatrix}\right)$ and   $\left(\begin{smallmatrix}0& 0 \\ 2 & 5\end{smallmatrix}\right)$. 
   \begin{itemize}
    \item
Let us explain that any $\theta$ corresponding to an infinite path going infinitely many times twice consecutively through the left hand cycle gives rise to a colouring $\vv$ with $E^*(\vv) > 1+\beta$. 

Since $P=4, P'=64$, we have $H=4$ and $Y=1$, $Y'=16$.
Assume $\theta=[0,a_1,a_2,a_3,\ldots]$ and there exist infinitely many $N$
 such that $A_N$ belongs to the class with $(Y, Y')$-name $\left(\begin{smallmatrix}0& 0 \\ 1 & 6\end{smallmatrix}\right)$, where we arrive to $A_N$ using the left hand cycle and we leave $A_N$ using again the left hand cycle. In this case $\delta_N=[a_{N+1}, a_{N+2}, \ldots]=[1,2,1,1,1,1,\ldots] \in (\frac{29}{21}, \frac{18}{13})$ and $\frac{Q_{N-1}}{Q_N}=[0,a_N,\ldots, a_2,a_1]= [0,1,1,1,2,1,\ldots]\in (\frac{12}{19},\frac{7}{11})$. 
 Then $\left(\begin{smallmatrix}0& 0 \\ 1 & 6\end{smallmatrix}\right)\left(\begin{smallmatrix}14\\ 19\end{smallmatrix}\right)=\left(\begin{smallmatrix}0 \\ 0\end{smallmatrix}\right) \mod \left(\begin{smallmatrix}1 \\ 16\end{smallmatrix}\right)$, therefore $k=4\cdot 19=76, \ell=4\cdot 14=56$ belongs to ${\mathcal S}_1(N,0)$. 
 It belongs to ${\mathcal S}_2(N,0)$ as  the inequality $|56\delta_N-76|< \delta_N+1$ is satisfied as well. 
Using~\eqref{Eq:Phi} we obtain $$\Phi_N\geq\frac{1+\frac{Q_{N-1}}{Q_N}}{76+56\frac{Q_{N-1}}{Q_N}}\geq \frac{1+\frac{12}{19}}{76+56\frac{12}{19}}>1+\beta.$$ By Proposition~\ref{vzorec} $E^*(\vv)=1+\limsup \Phi_N >1+\beta$.

\item 
Let us  show that also any $\theta$ corresponding to an infinite path going infinitely many times twice consecutively through the right hand cycle gives rise to a colouring $\vv$ with $E^*(\vv) > 1+\beta$. 

Assume there exist infinitely many $N$
 such that $A_N$ belongs to the class with $(Y, Y')$-name $\left(\begin{smallmatrix}0& 0 \\ 2 & 5\end{smallmatrix}\right)$, where we leave $A_N$ using twice consecutively the right hand cycle. Hence   $\delta_N=[a_{N+1}, a_{N+2}, \ldots]=[1,1,1,2,1,1,1,1,2,1,\ldots] \in (\frac{305}{193}, \frac{177}{112})$ and  $\frac{Q_{N-1}}{Q_N}=[0,a_N,\ldots, a_2,a_1]= [0,1,a,\ldots]$, where $a=1$ or $a=2$, i.e., $\frac{Q_{N-1}}{Q_N} \in (\frac{1}{2},\frac{3}{4})$. 
The solution $k=72, \ell=44$ belongs to ${\mathcal S}(N,0)={\mathcal S}_1(N,0) \cap {\mathcal S}_2(N,0)$.
Using~\eqref{Eq:Phi} we obtain $$\Phi_N\geq \frac{1+\frac{Q_{N-1}}{Q_N}}{72+44\frac{Q_{N-1}}{Q_N}}\geq \frac{1+\frac{1}{2}}{72+44\frac{1}{2}}>1+\beta.$$ By Proposition~\ref{vzorec} $E^*(\vv)=1+\limsup \Phi_N >1+\beta$.

\end{itemize}

\noindent To sum up, only $\theta$ having the continued fraction with  the same period 
as $\hat\theta=[0,6,\overline{1,1,1,1,2,1,2,1,1,1}]$ can give the minimal value of $E^*(\vv)$. 
\end{description}

\section{Comments and open questions}
Let us first make some comments on complexity of computation of the asymptotic repetitive threshold. We were not able to determine $RTB^*(d)$ for $d\geq 11$ with our computer because of time complexity of the computation. The bigger $d$, the larger the number of pairs $(P,P')$ that have to be treated. The number of such pairs grows exponentially. Moreover, even the number of vertices in the graph grows exponentially with $d$. For example, for every $d$ we have to consider $P=1$ and $P'=2^{d-2}$. The number of vertices in the corresponding graph $\Gamma_\beta$ = the number of classes of equivalence $\equiv$ is equal to  $3\cdot 2^{d-3}$.

The closer the value $1+\beta$ to $RTB^*(d)$ the more time consuming determination of $\mathcal{D}(\beta, A)$. The reason is the fact that more triplets $m, k, \ell$ satisfy Properties $\mathfrak{P}1$ and $\mathfrak{P}3$. Hence, it would be useful to have a good upper bound on $RTB^*(d)$ so that we do not have to repeat the computation for more values $\beta$. 

Time complexity is not the only obstacle to revealing $RTB^*(d)$ for $d \geq 11$. We are afraid that for larger $d$ our method will not give a unique period of the continued fraction corresponding to the minimal $E^*(\vv)$ and that ``human intervention'' will be needed similarly as in the case $d=10$.  

Let us conclude with some other open problems connected to the topic. 
 
 \begin{itemize}
 \item  We conjecture that  $RTB^*(d) < 1+q^d$  for some positive $q<1$. But from the obtained values $RTB^*(d)$,   we are not able  to derive a conjecture on the precise value  $RTB^*(d)$.  We observe at least that if $(P,P')$ is an optimal pair of periods for $d$ even, then $(P,2P')$ is optimal for $d+1$. It works for $d=2,4,6,8$.
 
 \item We believe that  $RTB^*(d)$ always belongs to a quadratic field, but we have no proof of this fact.

\item  $RTB^*(d)$ is defined to be $\inf\{E^*(\uu):\uu \text{ \ $d$-ary balanced}\}$. We can see for $d \leq 10$ that $RTB^*(d)$ is minimum of the set and it is not its accumulation point. The question is whether $RTB^*(d)$ may be an accumulation point. And if not, what is the second, third, etc. smallest element of the set.

\end{itemize}






\begin{bibdiv}
\begin{biblist}

\bib{BaCr}{article}{
      author={Badkobeh, G.},
      author={Crochemore, M.},
       title={Finite-repetition threshold for infinite ternary words},
        date={2011},
     journal={Electronic Proceedings in Theoret. Comput. Sci.},
      volume={63},
      pages = {37\ndash 43},
}

\bib{Bar20}{misc}{
      author={Baranwal, A.~R.},
       title={Decision algorithms for {O}strowski-automatic sequences},
        date={2020},
         url={http://hdl.handle.net/10012/15845},
        note={master thesis, University of Waterloo},
}

\bib{BaSh19}{inproceedings}{
      author={Baranwal, A.~R.},
      author={Shallit, J.~O.},
       title={Critical exponent of infinite balanced words via the {Pell}
  number system},
        date={2019},
   booktitle={Combinatorics on words - 12th international conference, {WORDS}
  2019, proceedings},
      series={Lecture Notes in Computer Science},
      volume={11682},
   publisher={Springer},
       pages={80\ndash 92},
}

\bib{BeSe02}{incollection}{
      author={Berstel, J.},
      author={S\'e\'ebold, P.},
       title={Sturmian words},
        date={2002},
   booktitle={Algebraic combinatorics on words},
      editor={Lothaire, M.},
      series={Encyclopedia of Mathematics and Its Applications},
      volume={90},
   publisher={Cambridge University Press},
       pages={45\ndash 110},
}

\bib{Car07}{article}{
      author={Carpi, A.},
       title={On {Dejean's} conjecture over large alphabets},
        date={2007},
     journal={Theoret. Comput. Sci.},
      volume={385},
       pages={137\ndash 151},
}

\bib{CaLu2000}{article}{
      author={Carpi, A.},
      author={de~Luca, A.},
       title={Special factors, periodicity, and an application to {Sturmian}
  words},
        date={2000},
     journal={Acta Inf.},
      volume={36},
      number={12},
       pages={983\ndash 1006},
}

\bib{CuRa11}{article}{
      author={Currie, J.~D.},
      author={Rampersad, N.},
       title={A proof of {Dejean's} conjecture},
        date={2011},
     journal={Math. Comp.},
      volume={80},
       pages={1063\ndash 1070},
}

\bib{DaLe2002}{article}{
      author={Damanik, D.},
      author={Lenz, D.},
       title={The index of {S}turmian sequences},
        date={2002},
     journal={European J. Combinatorics},
      volume={23},
       pages={23\ndash 29},
}

\bib{Dej72}{article}{
      author={{Dejean}, F.},
       title={Sur un {th\'eor\`eme} de {Thue}},
        date={1972},
     journal={J. Combin. Theory. Ser. A},
      volume={13},
       pages={90\ndash 99},
}

\bib{De}{article}{
      author={Dekking, F. M.},
       title={On repetitions of blocks in binary sequences},
        date={1976},
     journal={J. Combin. Theory. Ser. A},
      volume={20},
      number={3},
       pages={292\ndash 299},
}

\bib{DolceDP21}{inproceedings}{
      author={Dolce, F.},
      author={Dvo{\v r}{\'{a}}kov{\'{a}}, L\!'.},
      author={Pelantov{\'{a}}, E.},
       title={Computation of critical exponent in balanced sequences},
        date={2021},
   booktitle={Combinatorics on words - 13th international conference, {WORDS}
  2021, proceedings},
      series={Lecture Notes in Computer Science},
      volume={12847},
   publisher={Springer},
       pages={78\ndash 90},
}

\bib{DDP21}{article}{
      author={Dolce, F.},
      author={Dvo{\v r}{\'{a}}kov{\'{a}}, L\!'.},
      author={Pelantov{\'{a}}, E.},
       title={On balanced sequences and their critical exponent},
        date={2021},
     journal={arXiv},
      volume={2108.07503},
         url={https://arxiv.org/abs/2108.07503},
}

\bib{Dur98}{article}{
      author={Durand, F.},
       title={A characterization of substitutive sequences using return words},
        date={1998},
     journal={Discrete Mathematics},
      volume={179},
       pages={89\ndash 101},
}

\bib{DvMePe20}{article}{
      author={Dvo{\v r}{\'{a}}kov{\'{a}}, L\!'.},
      author={Medkov{\'{a}}, K.},
      author={Pelantov{\'{a}}, E.},
       title={Complementary symmetric {R}ote sequences: the critical exponent
  and the recurrence function},
        date={2020},
     journal={Discret. Math. Theor. Comput. Sci.},
      volume={22},
      number={1},
}

\bib{DOPS2022}{article}{
      author={Dvo{\v r}{\'{a}}kov{\'{a}}, L\!'.},
      author={Opo\v censk\'a, D.},
      author={Pelantov{\'{a}}, E.},
      author={Shur, A.~M.},
       title={On minimal critical exponent of balanced sequences},
        date={2022},
     journal={Theoret. Comput. Sci.},
      volume={922},
       pages={158\ndash 169},
}

\bib{GoGrBrSh2019}{article}{
      author={Goulden, {I. P.}},
      author={Granville, A.},
      author={Richmond, {L. B.}},
      author={Shallit, J.},
       title={Natural exact covering systems and the reversion of the
  {M}{\"o}bius series},
        date={2019},
     journal={Ramanujan Journal},
      volume={50},
      number={1},
       pages={211\ndash 235},
}

\bib{Hof2011}{article}{
      author={Hoffman, J.~W.},
      author={Livingston, W.~R.},
      author={Ruiz, J.},
       title={A note on disjoint covering systems—variations on a 2002 aime
  problem},
        date={2011},
     journal={Mathematics Magazine, JSTOR},
      volume={84},
      number={3},
       pages={211\ndash 215},
}

\bib{Hubert00}{article}{
      author={Hubert, P.},
       title={Suites {\'{e}}quilibr{\'{e}}es},
        date={2000},
     journal={Theoret. Comput. Sci.},
      volume={242},
      number={1-2},
       pages={91\ndash 108},
}

\bib{JuPi02}{article}{
      author={Justin, J.},
      author={Pirillo, G.},
       title={Episturmian words and episturmian morphisms},
        date={2002},
     journal={Theoret. Comput. Sci.},
      volume={276},
      number={1-2},
       pages={281\ndash 313},
}

\bib{KrSh07}{article}{
      author={Krieger, D.},
      author={Shallit, J.~O.},
       title={Every real number greater than 1 is a critical exponent},
        date={2007},
     journal={Theoret. Comput. Sci.},
      volume={381},
      number={1-3},
       pages={177\ndash 182},
}

\bib{MoCu07}{article}{
      author={Mohammad-Noori, M.},
      author={Currie, J.~D.},
       title={Dejean's conjecture and {Sturmian} words},
        date={2007},
     journal={European J. Comb.},
      volume={28},
       pages={876\ndash 890},
}

\bib{MoHe40}{article}{
      author={Morse, M.},
      author={Hedlund, G.~A.},
       title={Symbolic dynamics {II}. {Sturmian} trajectories},
        date={1940},
     journal={American Journal of Mathematics},
      volume={62},
       pages={1\ndash 42},
}

\bib{Mou92}{article}{
      author={Moulin-Ollagnier, J.},
       title={Proof of {Dejean's} conjecture for alphabets with $5,6,7,8,9,10$
  and $11$ letters},
        date={1992},
     journal={Theoret. Comput. Sci.},
      volume={95},
      number = {2},
       pages={187\ndash 205},
}

\bib{Pan84c}{article}{
      author={Pansiot, J.-J.},
       title={A propos d'une conjecture de {F. Dejean} sur les
  {r\'ep\'etitions} dans les mots},
        date={1984},
     journal={Discr. Appl. Math.},
      volume={7},
      number={3},
       pages={297\ndash 311},
}

\bib{PoSch2002}{article}{
      author={Porubsk{\'y}, {\v S}.},
      author={Sch{\"o}nheim, J.},
       title={Covering systems of {P}aul {E}rd{\"o}s: Past, present and future,
  in {P}aul {E}rd{\"o}s and his mathematics},
        date={2002},
     journal={Bolyai Society Mathematical Studies},
      volume={1(11)},
       pages={581\ndash 627},
}

\bib{RSV19}{article}{
      author={Rampersad, N.},
      author={Shallit, J.~O.},
      author={Vandomme, {\'{E}}.},
       title={Critical exponents of infinite balanced words},
        date={2019},
     journal={Theoret. Comput. Sci.},
      volume={777},
       pages={454\ndash 463},
}

\bib{Rao11}{article}{
      author={Rao, M.},
       title={Last cases of {Dejean's} conjecture},
        date={2011},
     journal={Theoret. Comput. Sci.},
      volume={412},
      number={27},
       pages={3010\ndash 3018},
}

\bib{Schnabel2015}{article}{
      author={Schnabel, O.},
       title={On the reducibility of exact covering systems},
        date={2015},
     journal={INTEGERS},
      volume={15},
       pages={\#A34},
}

\bib{Sh}{article}{
      author={Shallit, J.},
       title={Simultaneous avoidance of large squares and fractional powers in
  infinite binary words},
        date={2004},
     journal={International Journal of Foundations of Computer Science},
      volume={15},
      number={02},
       pages={317\ndash 327},
}

\bib{Si85}{article}{
      author={Simpson, R.~J.},
       title={Regular coverings of the integers by arithmetic progressions},
        date={1985},
     journal={Acta Arithmetica},
      volume={45},
       pages={145\ndash 152},
}

\bib{ShTu}{inproceedings}{
      author={Tunev, I.~N.},
      author={Shur, A.~M.},
       title={On two stronger versions of Dejean's conjecture},
        date={2012},
   booktitle={Mathematical foundations of computer science 2012},
      editor={Rovan, Branislav},
      editor={Sassone, Vladimiro},
      editor={Widmayer, Peter},
   publisher={Springer Berlin Heidelberg},
     address={Berlin, Heidelberg},
       pages={800\ndash 812},
}

\bib{Vui01}{article}{
      author={Vuillon, L.},
       title={A characterization of {S}turmian words by return words},
        date={2001},
     journal={Eur. J. Comb.},
      volume={22},
      number={2},
       pages={263\ndash 275},
}

\bib{Ze72}{article}{
      author={Zeckendorf, E.},
       title={A generalized {F}ibonacci numeration},
        date={1972},
     journal={Fibonacci Quart.},
      volume={10},
       pages={365\ndash 372},
}

\bib{Znam1969}{article}{
      author={Znám, {\v S}.},
       title={On exactly covering system of arithmetic sequences},
        date={1969},
     journal={Math. Ann.},
      volume={180},
       pages={227\ndash 232},
}

\end{biblist}
\end{bibdiv}

\end{document}